\newcommand{\disk}{\ensuremath{\mathbb{D}} } 
\newcommand{\sphere}{\bar{\Bbb{C}}} 
\newcommand{\riem}{\Sigma}  
\renewcommand{\Bbb}[1]{\ensuremath{\mathbb{#1}}}
\newcommand{\R}{\mathbb{R}}
\newglossaryentry{bergman}{%
name=\ensuremath{\mathcal{A}},
    description={Bergman space}
}
\newglossaryentry{Abw}{%
name=\ensuremath{\mathcal{A}_\mathrm{bw}},
    description={bridgeworthy harmonic oneforms}
}
\newglossaryentry{Aharm}{%
name=\ensuremath{\mathcal{A}_{\mathrm{harm}}},
    description={Harmonic Bergman space}
}
\newglossaryentry{Ahm}{%
name=\ensuremath{\mathcal{A}_{\mathrm{hm}}},
    description={Complex linear span of harmonic measures}
}
\newglossaryentry{exactA}{%
name=\ensuremath{\mathcal{A}^\mathrm{e}},
    description={Space of exact forms}
}
\newglossaryentry{seform}{%
name=\ensuremath{\mathcal{A}^{{\mathrm{se}}}},
    description={Semi-exact forms}
}
\newglossaryentry{peform}{%
name=\ensuremath{\mathcal{A}_{\mathrm{harm}}^{\mathrm{pe}}},
    description={Piecewise exact harmonic forms}
}
\newglossaryentry{annulus}{%
name=\ensuremath{\mathbb{A}_{a,b}},
    description={Annulus with inner radius $a$ and outer radius $b$}
}
\newglossaryentry{gota}{%
name=\ensuremath{ \mathrm{\textgoth{A}} },
    description={Forms with prescribed periods}
}
\newglossaryentry{Bphi}{%
name=\ensuremath{ \mathbf{B}(\phi)},
    description={Boundary map}
}
\newglossaryentry{cf}{%
name=\ensuremath{\mathbf{C}_{f}},
    description={right-composition with $f$}
}
\newglossaryentry{cl}{%
name=\ensuremath{\text{cl}},
    description={Closure of a set}
}
\newglossaryentry{D}{%
name=\ensuremath{\mathcal{D}},
    description={Dirichlet space}
}
\newglossaryentry{Dbw}{%
name=\ensuremath{\mathcal{D}_\mathrm{bw}},
    description={bridgeworthy harmonic functions}
}
\newglossaryentry{Dir}{%
name=\ensuremath{\mathbf{Dir}},
    description={Solution map to the Dirichlet problem}
}
\newglossaryentry{Dharm}{%
name=\ensuremath{\mathcal{D}_{\mathrm{harm}}},
    description={Harmonic Dirichlet space}
}
\newglossaryentry{Dhom}{%
name=\ensuremath{\dot{\mathcal{D}}},
    description={Dirichlet space modulo constants}
}
\newglossaryentry{harmeasure}{%
name=\ensuremath{d\omega_{k}},
    description={Harmonic measure}
}
\newglossaryentry{E}{%
name=\ensuremath{\mathbf{E}},
    description={Data to solution map}
}
\newglossaryentry{greenb}{%
name=\ensuremath{G_\Sigma},
    description={Green's function of a bordered Riemann surface $\Sigma$}
}
\newglossaryentry{greenc}{%
name=\ensuremath{\mathscr{G}},
    description={Green's function of a compact Riemann surface}
}
\newglossaryentry{bounce}{%
name=\ensuremath{\mathbf{G}_{U,\riem}},
    description={Bounce operator}
}
\newglossaryentry{grunsk}{%
name=\ensuremath{\mathbf{Gr}_{f}},
    description={Grunsky operator}
}
\newglossaryentry{sobolev}{%
name=\ensuremath{H^s},
    description={Sobolev space}
}
\newglossaryentry{homsobolev}{%
name=\ensuremath{\dot{H}^s},
    description={Homogeneous Sobolev space}
}
\newglossaryentry{sobolevconf}{%
name=\ensuremath{{H}^{1}_{\mathrm{conf}}},
    description={Conformal Sobolev space}
}
\newglossaryentry{Dbvaluescomp}{%
name=\ensuremath{\mathcal{H}'(\partial_k \riem)},
    description={Dirichlet boundary values for one forms}
}
\newglossaryentry{Dbvalues}{%
name=\ensuremath{\mathcal{H}'(\partial\riem)},
    description={Dirichlet boundary values for one forms}
}
\newglossaryentry{BVexactcomp} {%
name=\ensuremath{\dot{H}'(\partial_k \riem)},
    description={Boundary values with exact representative}
}
\newglossaryentry{BVexact} {%
name=\ensuremath{\dot{H}'(\partial \riem)},
    description={Boundary values with exact representative}
}
\newglossaryentry{crop}{%
name=\ensuremath{\mathbf{J}_{1}^{q}},
    description={Cauchy-Royden operator}
}
\newglossaryentry{rcrop}{%
name=\ensuremath{\mathbf{J}_{1,k}^{q}},
    description={Restricted Cauchy-Royden operator}
}
\newglossaryentry{Jdot}{%
name=\ensuremath{\dot{\mathbf{J}}_1},
    description={Cauchy-Royden operator on $\dot{\mathcal{D}}$}
}
\newglossaryentry{bergmank}{%
name=\ensuremath{K},
    description={Bergman kernel}
}
\newglossaryentry{schifferk}{%
name=\ensuremath{L},
    description={Schiffer kernel}
}
\newglossaryentry{rest}{%
name=\ensuremath{\mathbf{R}},
    description={Restriction operator}
}
\newglossaryentry{harmrest}{%
name=\ensuremath{\mathbf{R}^{\mathrm{h}}},
    description={Harmonic restriction operator}
}
\newglossaryentry{S}{%
name=\ensuremath{\mathbf{S}},
    description={The Schiffer comparison operator}
}
\newglossaryentry{Sharm}{%
name=\ensuremath{\mathbf{S}_{k}^{\mathrm{h}}},
    description={Harmonic Schiffer operator}
}
\newglossaryentry{theta}{%
name=\ensuremath{\Theta},
    description={Map}
}
\newglossaryentry{Tmix}{%
name=\ensuremath{\mathbf{T}_{\riem_{j},\riem_{k}}},
    description={Schiffer operator}
}
\newglossaryentry{T}{%
name=\ensuremath{\mathbf{T}},
    description={Schiffer comparison operator}
}
\newglossaryentry{overfare}{%
name=\ensuremath{\mathbf{O}},
    description={Overfare operator}
}
\newglossaryentry{doto}{%
name=\ensuremath{\dot{\mathbf{O}}},
    description={Overfare operator on $\dot{\mathcal{D}}$}
}
\newglossaryentry{exacto}{%
name=\ensuremath{\mathbf{O}^{\mathrm{e}}_{2,1}},
    description={Exact overfare operator}
}
\newglossaryentry{ohat}{%
name=\ensuremath{\hat{\mathbf{O}}},
    description={Operator}
}
\newglossaryentry{augo}{%
name=\ensuremath{\mathbf{O}^{\mathrm{aug}}},
    description={Augmented overfare operator}
}
\newglossaryentry{oprime}{%
name=\ensuremath{\mathbf{O}'},
    description={Operator}
}
\newglossaryentry{oprimedot}{%
name=\ensuremath{\dot{\mathbf{O}}'},
    description={Operator}
}
\newglossaryentry{pcap}{%
name=\ensuremath{\mathbf{P}_{\mathrm{cap}}}, description={Projection operator}
}
\newglossaryentry{period}{%
name=\ensuremath{\mathbf{\Upsilon}},
    description={Period map}
}
\theoremstyle{plain}
        \newtheorem{theorem}{Theorem}[section]
        \newtheorem{lemma}[theorem]{Lemma}
        \newtheorem{proposition}[theorem]{Proposition}
        \newtheorem{corollary}[theorem]{Corollary}
\theoremstyle{definition}
        \newtheorem{definition}[theorem]{Definition}
\theoremstyle{remark}
    \newtheorem{remark}[theorem]{Remark}
\numberwithin{equation}{section} 
\numberwithin{figure}{section} 
\author[E. Schippers]{Eric Schippers}
\author[W. Staubach]{Wolfgang Staubach}
\address{\newline
       Eric Schippers \newline
       Machray Hall, Dept. of Mathematics,
   University of Manitoba, \newline Winnipeg, MB
   Canada R3T 2N2}
       \email{eric.schippers@umanitoba.ca}
\address{\newline
       Wolfgang Staubach \newline
       Department of  Mathematics, Uppsala University, \newline
       S-751 06 Uppsala, Sweden}
       \email{wulf@math.uu.se}
\keywords{Overfare operator, Scattering, Bordered Riemann surfaces, Schiffer operators, $L^2$ harmonic one-forms, Quasicircles, Bounded zero mode quasicircles, Period mapping, Generalized polarizations, Generalized Grunsky inequalities, Conformally nontangential limits, Conformal Sobolev spaces}
\subjclass{14F40, 30F15, 30F30, 35P99, 51M15}
 \title{Scattering theory on Riemann surfaces II: the scattering matrix and generalized period mappings}
\begin{document}
\begin{abstract}
We construct a scattering theory for harmonic one-forms on Riemann surfaces, obtained from boundary value problems {involving} systems of curves and the jump problem. We obtain an explicit expression for the scattering matrix in terms of integral operators which we call Schiffer operators, and show that the matrix is unitary.  We also obtain a general association of positive polarizing Lagrangian spaces to bordered Riemann surfaces, which unifies the classical polarizations for compact surfaces of algebraic geometry with the infinite-dimensional period map of the universal Teichm\"uller space. 
\end{abstract}

\maketitle

\tableofcontents
\begin{section}{Introduction}
\begin{subsection}{Overview}

 This paper is the second of two papers investigating  the global analysis of a conformally invariant scattering process on compact Riemann surfaces $\mathscr{R}$ divided into two pieces $\riem_1$ and $\riem_2$ by a collection of quasicircles. At least one of the two pieces may have finitely many connected components. Given an $L^2$ harmonic one-form $\alpha_2$ on $\riem_2$, we obtain an $L^2$ harmonic one-form $\alpha_1$ on $\riem_1$ with the same boundary values as $\alpha_2$ on the shared boundary. It is necessary to specify cohomological data.  We call $\alpha_1$ the overfare of $\alpha_2$. The analytic theory of overfare for functions was established by the authors in \cite{Schippers_Staubach_scattering_I}, and for one-forms in \cite{Schippers_Staubach_scattering_II}.  

 We obtain a scattering matrix relating the holomorphic and anti-holomorphic parts of $\alpha_1$ and $\alpha_2$,  together with the holomorphic and anti-holomorphic parts of a one-form on $\mathcal{R}$ which specifies the cohomological data. The scattering matrix has a simple and elegant form in terms of integral operators of Schiffer, and this matrix is unitary. The entire formulation is conformally invariant.
 These results rely on adjoint identities for the Schiffer operators, their interaction with cohomology, and characterizations of their kernels and images, obtained in Part I \cite{Schippers_Staubach_scattering_III}. 
 
 This is applied to define and interpret generalizations of the period map to bordered surfaces. 
 These generalizations are operators on function spaces of forms, and we illustrate how it recovers the classical period mapping. We also derive a generalized Grunsky inequality which takes into account cohomological data. This contains the classical Grunsky inequality and generalization to Riemann surfaces with $g$ handles and $n$ borders due to M. Shirazi \cite{Shirazi_thesis,Shirazi_Grunsky}. We also show its relation to a holomorphic boundary value problem for one-forms. 
  
\end{subsection}
\begin{subsection}{Scattering matrix and unitarity}

 We define a scattering process for one-forms in the following way. The overfare process defined above for functions uniquely defines the overfare of exact one-forms from connected surfaces to arbitrary ones, by 
 \begin{align*}
  \mathbf{O}^{\mathrm{e}}_{\riem_1,\riem_2}: \mathcal{A}^{\mathrm{e}}_{\mathrm{harm}}(\riem_1) & \rightarrow \mathcal{A}^{\mathrm{e}}_{\mathrm{harm}}(\riem_2) \\
  \alpha & \mapsto d \mathbf{O}_{\riem_1,\riem_2} d^{-1}
 \end{align*}
 where $\mathbf{O}_{\riem_1,\riem_2}$ is overfare of harmonic functions and $\mathcal{A}_{\mathrm{harm}}(\riem)$ denotes $L^2$ harmonic one-forms on $\riem$.  For arbitrary one-forms on a connected surface, we specify the cohomological data as follows: let $\zeta \in \mathcal{A}_{\mathrm{harm}}(\mathscr{R})$ be a one-form such that $\alpha - \zeta$ is exact on $\riem_1$.  We seek a one-form with the same boundary values as $\alpha$ and in the cohomology class of $\zeta$ on $\riem_2$. This form is
 \[  \mathbf{O}^{\mathrm{e}}_{\riem_1,\riem_2} \left( \alpha - \left. \zeta \right|_{\riem_1} \right) + \left. \zeta \right|_{\riem_2}.   \]
 We call $\zeta$ a ``catalyzing form'', and forms which are related by overfare via $\zeta$ compatible.
 
 From this overfare process we define a scattering operator which takes the holomorphic parts of the compatible forms, together with the anti-holomorphic part of the catalyzing forms, and produces the anti-holomorphic parts of the compatible forms and the holomorphic part of the catalyzing form. The anti-holomorphic parts can be thought of as left moving waves, while the holomorphic parts can be thought of as right moving waves.  
 
 We give an explicit form for the scattering matrix in terms of the Schiffer operators, using the identities and cohomological results obtained in \cite{Schippers_Staubach_scattering_III}. We furthermore show that this scattering matrix is unitary. 

 We remark that this paper is one part of a longer work \cite{Schippers_Staubach_scattering_arxiv} establishing the scattering theory of one-forms on Riemann surfaces, which we have divided into four parts. The first two papers \cite{Schippers_Staubach_scattering_I,Schippers_Staubach_scattering_II} established the analytic basis for the scattering process in terms of boundary value problems for $L^2$ harmonic functions and one-forms, while the third paper \cite{Schippers_Staubach_scattering_III} is part I to the present paper, whose contents are described above. We intend for these four papers to pave the way for the investigation of geometric and representation theoretic aspects of Teichm\"uller theory and conformal field theory. 
\end{subsection}
\begin{subsection}{Polarizations and Grunsky operators}

 For context, we sketch the well-known classical polarization for compact surfaces. Given a compact Riemann surface $\mathscr{R}$, by the Hodge decomposition theorem, every $L^2$ one-form has a harmonic representative. The spaces of harmonic one-forms in turn decompose into the spaces of holomorphic and anti-holomorphic one-forms. Thus the cohomology classes of a Riemann surface are represented by the direct sum of the vector spaces of holomorphic and  anti-holomorphic one-forms. This decomposition depends on the complex structure. 
 
In complex algebraic geometry, this picture is often represented in terms of the so-called period-matrix. Given a basis of the homology, divided into $a$ and $b$ curves satisfying the usual intersection conditions, one normalizes half of the periods of the holomorphic one-forms, and encoding the remaining periods in a $g \times g$ matrix where $g$ is the genus. Most often one normalizes matrix of $a$ periods to be the identity matrix; in that case, by the Riemann bilinear relations, the matrix of $b$ periods lies in the Siegel upper half-space of symmetric matrices with positive definite imaginary part. 
 It is also possible to represent the periods with a matrix of norm less than one (that is, a matrix in the Siegel disk). It was shown by {{L. Ahlfors \cite{Ahlfors}}} that the period matrix can be used to give coordinates on Teichm\"uller space; the idea of using periods as coordinates on the moduli space goes back to B. Riemann \cite{Riemann}.\\
 
 An analogue of the period map exists for the case of the Teichm\"uller space of the disk. Nag and Sullivan \cite{NS}, following earlier work of A. Kirillov and D. Yuri'ev in the smooth case \cite{KY2}, showed that the set of quasisymmetries of the circle acts symplectically on the space of polarizations of the set of Dirichlet-bounded harmonic functions on the disk, and that the space of polarizations can be identified with an infinite-dimensional Siegel disk. They further outlined various analogies with the classical period matrix. L. Takhtajan and L-P. Teo \cite{Takhtajan_Teo_Memoirs} showed that this ``period matrix'' is in fact the Grunsky matrix, and proved that the period map is a holomorphic map of the Teichm\"uller space of the unit disk (which is also the universal Teichm\"uller space). Later, with Radnell, the authors generalized this holomorphicity to genus zero surfaces with $n$ boundary curves. All of these results demonstrate the existence of a powerful analogy with the classical period matrix. Nevertheless they do not indicate the mathematical source of the analogy, nor how to unify the classical case for compact surfaces and the case of surfaces with border. \\

 For genus zero surfaces with $n$ boundary curves, we showed with Radnell that the graph of the Grunsky matrix gives the boundary values 
 of the set of Dirichlet-bounded harmonic functions  curves \cite{RSS_Dirichletspace}, using overfare. This was extended by M. Shirazi  \cite{Shirazi_thesis,Shirazi_Grunsky} to the genus $g$ case.
 In this paper, we show that by treating polarizations as decompositions of boundary values of semi-exact one-forms, all the versions of the polarizations can be viewed as special cases of a single general theorem. The unifying principle is provided by boundary values of harmonic one-forms.
 In particular, 
 we show that the polarizing subspace of holomorphic one-forms on a bordered surface can be viewed as the graph of an operator in an infinite Siegel disk, from which the polarizations in both the compact case and the case of genus zero surfaces with borders can be recovered.  The overfare process is a crucial part of establishing this unified picture. 
 
 The bound on the polarizing operator can be viewed as a far-reaching generalization of the Grunsky inequalities. We also show how special cases of the Grunsky inequalities can be recovered from this one. 
 We also show the connection to solvability of a holomorphic boundary value problem for $L^2$ one-forms, and give a necessary and sufficient condition for its well-posedness which takes into account cohomology. 
\end{subsection}
\begin{subsection}{Outline}
  In Section \ref{se:preliminaries} we collect results about overfare from \cite{Schippers_Staubach_scattering_I,Schippers_Staubach_scattering_II}, and from the first part of this paper \cite{Schippers_Staubach_scattering_III}. This includes some basic definitions about quasicircles and bordered surfaces, spaces of functions, and harmonic measures and Green's function, and the Schiffer operators themselves. It also includes results about conformally invariant formulation of Sobolev spaces on Riemann surfaces, the nature of boundary values of harmonic $L^2$ functions, and boundedness of the overfare process. 

  Section \ref{se:scattering} describes the overfare process for one-forms. In Section \ref{se:subsection_overfare_oneforms}, we recall results on the $H^{-1/2}$  boundary values of $L^2$ harmonic one-forms, and in particular the fact that they are independent of the ``side'' of the quasicircle the boundary values are obtained from.  We then define the overfare process in terms of what we call ``compatible triples'' in \ref{se:overfare_one_forms_second}, specifying the cohomological data with a one-form. It also contains some theorems on uniqueness of compatible triples, as well as a geometric motivation for the definition.  Section \ref{se:decompositions_compatibility} contains lemmas and theorems which relate the elements of the compatible triples with the Schiffer operators and their interaction with decompositions of the one-forms with respect to holomorphicity/anti-holomorphicity and cohomology. Their derivation relies heavily on results obtained in \cite{Schippers_Staubach_scattering_III}. These technical results are the workhorses leading to the main result in Section \ref{se:scattering_subsection}: the simple form for the scattering matrix in terms of Schiffer operators, and its unitarity. Section \ref{se:scattering_analogies} is a heuristic discussion of analogies with scattering theory on the line.

  Section \ref{se:period_mapping} contains the results on the period map, Grunsky inequalities, and holomorphic boundary value problems. Section \ref{se:period_map_upsilon} defines the period mapping for Riemann surfaces with $g$ handles and $n$ borders embedded in a compact Riemann surface of genus $g$. The main result is that the period is an isomorphism onto the space of semi-exact holomorphic one-forms on the bordered surface.  In Section \ref{se:KYNS_period} we show how this period map is associated to a polarization of a fixed space of $L^2$ one-forms, and thus can be seen as a generalization of the classical period mapping.  In Section \ref{se:Grunsky} we derive the generalization of the Grunsky operator and inequalities. Finally, in Section \ref{se:holomorphic_BVP} we give the solution to the holomorphic boundary value problem. \vspace{0.5cm} 
\end{subsection}
\end{section}

{{\bf{Acknowledgements.}} The first author was partially supported by the National Sciences and Engineering Research Council of Canada. The second author is grateful to Andreas Strömbergsson for partial financial support through a grant from Knut and Alice Wallenberg Foundation. 
}
\begin{section}{Preliminaries} \label{se:preliminaries}
\begin{subsection}{About this section}

 This section gathers the definitions and basic results used throughout the paper.  This includes Dirichlet spaces of functions and Bergman spaces of forms; Riemann surfaces, their boundaries and specialized charts called collar charts; sewing; Green's functions on compact surfaces and surfaces with boundary; Sobolev spaces; and harmonic measures and boundary period matrices.\\

 {In order to make the paper self-contained, this preliminary section has to repeat material from \cite{Schippers_Staubach_scattering_III}.  Sections \ref{se:prelim_sobolev} through \ref{se:prelim_overfare_functions} summarizes work of the authors obtained in \cite{Schippers_Staubach_scattering_I,Schippers_Staubach_scattering_II}. A full treatment can be found there.  }
\end{subsection}
\begin{subsection}{Bordered surfaces and quasicircles}

 We first establish some notation and terminology regarding Riemann surfaces and curves within them. Details can be found in \cite{Schippers_Staubach_scattering_I}. 
 
In what follows we let $\mathbb{C}$ denote the complex plane, $\gls{annulus}$ denote the annulus $ \{ z;\, a<|z|<b \}$, and $\disk = \{ z \in \mathbb{C} : |z|<1 \}$ denote the unit disk in the plane.

We consider the following kind of bordered Riemann surface.
\begin{definition}\label{defn:gn-type surface}
We say that $\riem$ is a \emph{bordered Riemann surface of type} $(g,n),$ if it is bordered and the border has $n$ connected components, each of which is homeomorphic to $\mathbb{S}^1$, and its double $\riem^d$ is a compact surface of genus $2g + n-1$.  
 \end{definition}
Visually, a bordered surface of type $(g,n)$ is a $g$-handled surface bounded by $n$ simple closed curves.  The term ``border'', as opposed to the purely topological term ``boundary'', is here reserved for a boundary which is compatible with the complex structure of the interior; see e.g. \cite{Ahlfors_Sario}.  
 We order the borders and label them accordingly, so that $\partial \riem = \partial_1 \riem \cup \cdots \cup \partial_n \riem$.  The borders can be identified with analytic curves in the double $\riem^d$, and we denote the union $\riem\cup \partial \riem$ by $\text{cl}(\riem)$.  

 In the following, a Jordan curve in a Riemann surface is a homeomorphic image of $\mathbb{S}^1$.  
 A quasicircle in the complex plane $\mathbb{C}$ is a Jordan curve which is the image of the unit circle under a quasiconformal map of the plane.
 \begin{definition} 
 We say that Jordan curve $\Gamma$ of $\mathbb{S}^1$ is a \emph{quasicircle} if it is contained in an open set $U$ and there is a biholomorphism $\phi:U \rightarrow V \subseteq \mathbb{C}$ such that $\phi(\Gamma)$ is a quasicircle in the plane. 
 \end{definition}
 \begin{definition} 
 A doubly-connected chart of a quasicircle $\Gamma$ in a Riemann surface is an open neighbourhood $U$ of $\Gamma$ and a map $\phi:U \rightarrow \mathbb{A}_{r,R} $ for some annulus 
 \[  \mathbb{A}_{r,R} \subset \mathbb{C}, \ \ \  r<1<R,  \] so that $\phi(\Gamma)$ is isotopic to the circle $|z|=1.$  We call $U$ a doubly-connected neighbouhood of $\Gamma$.
 \end{definition}
 It can be shown that any quasicircle has a doubly-connected chart. 
 By shrinking $\mathbb{A}_{r,R} $, we can assume  (1) that $\phi$ extends biholomorphically to an open neighourhood of $\text{cl} \,(U)$, (2), that the boundary curves of $U$ are analytic Jordan curves, and (3) that $\Gamma$ is isotopic to each of the boundary curves (using $\phi^{-1}$ to provide the isotopy).   
  
 \end{subsection}

 \begin{subsection}{Function spaces and holomorphic and harmonic forms}
  In this paper, we will denote positive constants in the inequalities by $C$ whose 
value is not crucial to the problem at hand. The value of $C$ may differ
from line to line, but in each instance could be estimated if necessary.  Moreover, when the values of constants in our estimates are of no significance for our main purpose, then we use the notation $a\lesssim b$ as a shorthand for $a\leq Cb$. If $a\lesssim b$ and $b\lesssim a$ then we write $a\approx b.$  \\

 We define the \emph{Wirtinger operators} via their local coordinate expressions
\[   \partial f = \frac{\partial f}{\partial z}\, dz,  \ \ \ 
   \overline{\partial} f =  \frac{\partial f}{\partial \bar{z}}\, d \bar{z}. \]
 On any Riemann surface,  define the dual of the almost 
 complex structure,  $\ast$ in local coordinates $z=x+iy$,  by 
 \[  \ast (a\, dx + b \, dy) = a \,dy - b \,dx. \]
This is independent of the choice of coordinates.
It can also be computed in coordinates that for any complex function $h$ 
\begin{equation} \label{eq:Wirtinger_to_hodge}
    2 \partial_z h = dh + i \ast dh.
\end{equation}

Denote complex conjugation of functions and forms with a bar, e.g. $\overline{\alpha}$.

Denote by $L^2(U)$ the set of one-forms $\omega$ on an open set $U$ which satisfy
\[   \iint_U \omega \wedge \ast \overline{\omega} < \infty  \]
(observe that the integrand is positive at every point, as can be seen by writing the expression in local coordinates).  
This is a Hilbert space with respect to the inner product
\begin{equation} \label{eq:form_inner_product}
 (\omega_1,\omega_2) =  \iint_U \omega_1 \wedge \ast \overline{\omega_2}.
\end{equation}

\begin{definition}\label{defn:bergman spaces}
The \emph{Bergman space of holomorphic one forms} is 
\begin{equation}
    \gls{bergman}(U) = \{ \alpha \in L^2(U) \,:\, \alpha \ \text{holomorphic} \}.
\end{equation} 
 The anti-holomorphic Bergman space is denoted $\overline{\mathcal{A}(U)}$.   We will also denote 
\begin{equation}
    \gls{Aharm}(U) =\{ \alpha \in L^2(U) \,:\, \alpha \ \text{harmonic} \}.
\end{equation}
\end{definition}

Observe that $\mathcal{A}(U)$ and $\overline{\mathcal{A}(U)}$ are orthogonal with respect to the inner product \eqref{eq:form_inner_product}.  In fact we have the direct sum decomposition
\begin{equation}\label{direct sum decomposition}
 \mathcal{A}_{\mathrm{harm}}(U) = \mathcal{A}(U) \oplus \overline{\mathcal{A}(U)}.    
\end{equation}      
If we restrict the inner product to 
 $\alpha, \beta \in \mathcal{A}(U)$ then since $\ast \overline{\beta} = i \overline{\beta}$, we have   
\[  (\alpha,\beta) = i \iint_U \alpha \wedge \overline{\beta}.      \]

Denote the projections induced by this decomposition by 
\begin{align}  \label{eq:hol_antihol_projections_Bergman}
  \mathbf{P}_U: \mathcal{A}_{\mathrm{harm}}(U) & \rightarrow \mathcal{A}(U) \nonumber \\
  \overline{\mathbf{P}}_U : \mathcal{A}_{\mathrm{harm}}(U) & \rightarrow  \overline{\mathcal{A}(U)}.
\end{align}

Let $f: U \rightarrow V$ be a biholomorphism. We denote the pull-back of $\alpha \in \mathcal{A}_{\mathrm{harm}}(V)$
under $f$ by $f^*\alpha.$ 
Explicitly, if $\alpha$ is given in local coordinates $w$ by $a(w)\, dw + \overline{b(w)} \, d\bar{w}$ and $w=f(z),$ then the pull-back is given by 
\[   f^* \left( a(w)\, dw + \overline{b(w)} \,d\bar{w} \right)= a(f(z)) f'(z)\, dz + \overline{b(f(z))} \overline{f'(z)}\, d\bar{z}.   \]
The Bergman spaces are all conformally invariant, in the sense that if $f:U \rightarrow V$ is a biholomorphism, then $f^*\mathcal{A}(V) = \mathcal{A}(U)$ and this preserves the inner product.  Similar statements hold for the anti-holomorphic and harmonic spaces.\\ 

\begin{definition}\label{def: exact holo and harm forms}
 We define the space $\gls{exactA}_{\mathrm{harm}}(U)$ as the subspace of exact elements of $\mathcal{A}_{\mathrm{harm}}(U)$, and similarly for $\mathcal{A}^\mathrm{e}(\riem)$ and $\overline{\mathcal{A}^\mathrm{e}(\riem)}$.  
\end{definition}

{We also consider one-forms which have zero boundary periods, which we call semi-exact.
\begin{definition} \label{de:semi_exact}
 Let $\riem$ be a bordered surface of type $(g,n)$. We say that an $L^2$ one-form $\alpha \in \mathcal{A}_{\mathrm{harm}}(\riem)$ is semi-exact if for any simple closed curve $\gamma$ homotopic to a boundary curve $\partial_k \riem$, 
 \[ \int_{\gamma}  \alpha =0. \]
 The class of semi-exact one-forms on $\riem$ is denoted $\mathcal{A}^{\mathrm{se}}_{\mathrm{harm}}(\riem)$. The holomorphic and anti-holomorphic semi-exact one-forms are denoted by $\gls{seform}(\riem)$  and 
 $\overline{\mathcal{A}^{\mathrm{se}}(\riem)}$ respectively.
\end{definition}}

The following spaces also play significant roles in this paper.
\begin{definition}\label{defn:dirichlet spaces}
The \emph{Dirichlet spaces of functions} are defined by 
\begin{align*}
   \gls{Dharm}(U)& = \{ f:U \rightarrow \mathbb{C}, f \in C^2(U), \,:\, 
   df\in L^2 (U)\,\,\,\mathrm{and}\,\, \, d\ast df =0 \},\\
   \gls{D}(U)& = \{ f:U \rightarrow \mathbb{C} \,:\, 
    df \in \mathcal{A}(U) \}, \ \text{and} \\
    \overline{\mathcal{D}(U)} & = \{ f:U \rightarrow \mathbb{C} \,:\, 
    df \in \overline{\mathcal{A}(U)} \}. \\
\end{align*}
\end{definition}
We also consider homogeneous Dirichlet spaces $\dot{\mathcal{D}}_{\mathrm{harm}}(U)$, $\dot{\mathcal{D}}(U)$, and $\dot{\overline{\mathcal{D}}}(U)$ obtained by quotienting by the following equivalence relation: two functions are equivalent if they differ by a locally constant function. 
We can define a degenerate inner product on $\mathcal{D}_{\mathrm{harm}}(U)$ by 
\[   (f,g)_{\mathcal{D}_{\mathrm{harm}}(U)} = (df,dg)_{\mathcal{A}_{\mathrm{harm}}(U)},   \] 
where the right hand side is the inner product (\ref{eq:form_inner_product}) restricted to elements of $\mathcal{A}_{\mathrm{harm}}(U)$.  The inner product can be used to define a seminorm on $\mathcal{D}_{\mathrm{harm}}(U)$, by letting $$\Vert f\Vert^2_{\mathcal{D}_{\mathrm{harm}}(U)}:=(df,df)_{\mathcal{A}_{\mathrm{harm}}(U)}.$$

Observe that the Dirichlet spaces are conformally invariant in the same sense as the Bergman spaces.  That is, if $f: U \rightarrow V$ is a biholomorphism then 
\begin{equation*}
 \gls{cf} h = h \circ f
\end{equation*} 
satisfies
\[  \mathbf{C}_f :\mathcal{D}(V) \rightarrow \mathcal{D}(U) \]
and this is a seminorm preserving bijection. Similar statements hold for the anti-holomorphic and harmonic spaces.  On the homogeneous spaces it is an isometry. 
\end{subsection}
\begin{subsection}{Harmonic measures} \label{ harmonic measures}

We start with the definition of harmonic measure in the context of bordered Riemann surfaces. See \cite{Schippers_Staubach_scattering_II} for details. 
\begin{definition}\label{def:harmonic measure}
Let $\omega_k$, $k=1,\ldots,n$ be the unique harmonic function which is continuous 
 on the closure of $\riem$ and which satisfies
 \begin{equation*}
  \omega_k = \left\{ \begin{array}{ll} 
    1 & \mathrm{on}\,\,\, \partial_k \riem \\ 0 & \mathrm{on}\,\,\, \partial_j \riem, \ \  j \neq k.  \end{array}  \right.   
 \end{equation*}
 The one-forms $\gls{harmeasure}$ are the {\it harmonic measures}.  We denote the complex linear span of the harmonic measures by $\gls{Ahm}(\riem).$ Moreover we define $\ast \mathcal{A}_{\mathrm{hm}}(\riem) = \{ \ast \alpha : \alpha \in \mathcal{A}_{\mathrm{hm}}(\riem) \}.$
\end{definition}

By definition any element of $\mathcal{A}_{\mathrm{hm}}(\riem)$ is exact, and its anti-derivative $\omega$ is constant on each boundary curve. On the other hand, the elements of $\ast \mathcal{A}_{\mathrm{hm}}(\riem)$ are all closed but not necessarily exact. Elements of $\mathcal{A}_{\mathrm{hm}}(\riem)$ and $\ast \mathcal{A}_{\mathrm{hm}}(\riem)$ extend real analytically to the border, in the sense that they are restrictions to $\riem$ of harmonic one-forms on the double.  In particular they are square-integrable. Summarizing:
 
\begin{proposition} \label{pr:harmonic_measures_L2}
 Let $\riem$ be a bordered surface of type $(g,n)$.  Then $\mathcal{A}_{\mathrm{hm}}(\riem) \subseteq \mathcal{A}_{\mathrm{harm}}^{\mathrm{e}}(\riem)$ and 
 $\ast \mathcal{A}_{\mathrm{hm}}(\riem) \subseteq \mathcal{A}_{\mathrm{harm}}(\riem)$.  
\end{proposition}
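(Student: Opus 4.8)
The plan is to reduce to the generators $d\omega_k$ and then treat harmonicity, exactness, and square-integrability separately, the last being the only substantial point. By $\mathbb{C}$-linearity it suffices to prove both claims for a single harmonic measure $d\omega_k$. Since $\omega_k$ is harmonic, $d\omega_k$ is closed and $d\ast d\omega_k=0$, so $d\omega_k$ is a harmonic one-form, and it is manifestly exact with antiderivative $\omega_k$. For the starred form, $\ast d\omega_k$ is closed because $d\ast d\omega_k=0$, while $d\ast(\ast d\omega_k)=-d\,d\omega_k=0$ using $\ast\ast=-1$ on one-forms; hence $\ast d\omega_k$ is harmonic as well. It need not be exact, since its periods around the $\partial_j\riem$ are the conjugate periods of $\omega_k$, which is why only the weaker inclusion $\ast\mathcal{A}_{\mathrm{hm}}(\riem)\subseteq\mathcal{A}_{\mathrm{harm}}(\riem)$ is asserted. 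Finally, a direct computation in a chart shows $(\ast\omega_1,\ast\omega_2)=(\omega_1,\omega_2)$ with respect to \eqref{eq:form_inner_product}, so $\ast$ is an isometry of $L^2$ one-forms; consequently, once $d\omega_k\in L^2(\riem)$ is established, $\ast d\omega_k\in L^2(\riem)$ follows automatically. Both inclusions thus reduce to the square-integrability of $d\omega_k$.

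The heart of the matter is therefore to show $d\omega_k\in L^2(\riem)$. Because $\mathrm{cl}(\riem)$ is compact, it is enough to exhibit a smooth one-form on an open neighbourhood $W$ of $\mathrm{cl}(\riem)$ in the double $\riem^d$ whose restriction to $\riem$ equals $d\omega_k$: the integrand $d\omega_k\wedge\ast\overline{d\omega_k}$ is then a continuous $2$-form on $W$, and its integral over the relatively compact set $\riem$ is finite. I would obtain such an extension by reflection. The borders $\partial_j\riem$ are analytic Jordan curves in $\riem^d$, being the fixed-point set of the anti-holomorphic involution $\sigma$ defining the double, so each admits a doubly-connected chart $\phi\colon U\to\mathbb{A}_{r,R}$ carrying $\partial_j\riem$ to $\{|z|=1\}$ and $U\cap\riem$ to $\{r<|z|<1\}$, in which $\sigma$ becomes $z\mapsto 1/\bar z$. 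In this chart $h:=\omega_k\circ\phi^{-1}$ is harmonic on $\{r<|z|<1\}$, continuous up to $\{|z|=1\}$ with constant boundary value $c_j$ (equal to $1$ if $j=k$ and to $0$ otherwise). The Schwarz reflection principle for constant Dirichlet data extends $h$ harmonically and real-analytically across $\{|z|=1\}$ by $\tilde h(z)=2c_j-h(1/\bar z)$, and pulling back through $\phi$ extends $\omega_k$ harmonically to a doubly-connected neighbourhood of each $\partial_j\riem$. Together with the interior this produces the neighbourhood $W$ and the desired extension of $d\omega_k$.

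I expect the reflection step to be the main obstacle, for two related reasons. First, one must check that the chart-by-chart extensions are consistent and genuinely harmonic across the border; the key observation is that differentiation kills the additive constants $c_j$, so that in the chart $d\tilde h=-\sigma^\ast(dh)$, independently of $c_j$. Thus, although the function $\omega_k$ does not extend single-valuedly to $\riem^d$, the one-form $d\omega_k$ glues to a single harmonic one-form on all of $\riem^d$, equal to $d\omega_k$ on the $\riem$-half and to $-\sigma^\ast d\omega_k$ on the mirror half; this is precisely the statement that $d\omega_k$ restricts from a harmonic one-form on the double, which is the sharper conclusion behind the $L^2$ bound. Second, one should confirm that this extension is compatible with the conformally invariant pairing \eqref{eq:form_inner_product}; this is automatic because that integrand is intrinsically defined, but it is what guarantees the pointwise control near $\partial\riem$ converts into a finite integral. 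The remaining verifications — harmonicity, exactness of $d\omega_k$, and the isometry property of $\ast$ — are routine.
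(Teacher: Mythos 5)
Your proposal is correct and follows essentially the same route as the paper, which justifies the proposition by observing that harmonic measures are exact by definition and that elements of $\mathcal{A}_{\mathrm{hm}}(\riem)$ and $\ast\mathcal{A}_{\mathrm{hm}}(\riem)$ are restrictions of harmonic one-forms on the double, hence square-integrable by compactness. Your Schwarz-reflection argument (using the constant boundary values of $\omega_k$, which drop out upon differentiation) simply supplies the details of that extension-to-the-double step, which the paper cites from its predecessor rather than reproving.
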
 

 \begin{definition}\label{periodmatrix}
 The \emph{boundary period matrix} $\Pi_{jk}$ of a non-compact surface $\riem$ of type $(g,n)$ is defined by
 \[  \Pi_{jk} := \int_{\partial \riem} \omega_j  \ast d\omega_k = \int_{\partial_j \riem} \ast d \omega_k.   \]
\end{definition} 
 \begin{theorem} \label{th:period_matrix_invertible} If we let $j,k$ run from $1$ to $n$, omitting one fixed value $m$ say, then the resulting matrix
  $\Pi_{jk}$ is symmetric and positive definite.  
 \end{theorem}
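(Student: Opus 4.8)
The plan is to recognize $\Pi_{jk}$ as the Gram matrix of the harmonic measures $\omega_1,\dots,\omega_n$ with respect to the Dirichlet pairing, and to extract both symmetry and definiteness from Stokes' theorem. Since by Proposition \ref{pr:harmonic_measures_L2} each $d\omega_k$ lies in $\mathcal{A}_{\mathrm{harm}}^{\mathrm{e}}(\riem)$ and is in fact the restriction of a harmonic one-form on the double $\riem^d$, the $\omega_k$ extend smoothly across the analytic border curves, so every boundary integral below is classical.

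First I would establish symmetry. For harmonic $u,v$ one has $d(u\ast dv)=du\wedge\ast dv+u\,d\!\ast\! dv=du\wedge\ast dv$ because $d\!\ast\! dv=0$, so Stokes' theorem gives $\int_{\partial\riem}u\ast dv=\iint_{\riem}du\wedge\ast dv$. Since for real one-forms the wedge pairing is symmetric, $du\wedge\ast dv=dv\wedge\ast du$ pointwise, and therefore
\[
 \int_{\partial\riem}u\ast dv=\iint_{\riem}du\wedge\ast dv=\iint_{\riem}dv\wedge\ast du=\int_{\partial\riem}v\ast du .
\]
Taking $u=\omega_j$ and $v=\omega_k$ yields $\Pi_{jk}=\Pi_{kj}$.

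Next I would compute the quadratic form. Fix the omitted index $m$, take a real vector $(c_j)_{j\neq m}$, and set $u=\sum_{j\neq m}c_j\omega_j$, a harmonic function with $du\in\mathcal{A}_{\mathrm{harm}}^{\mathrm{e}}(\riem)$. Bilinearity of $\Pi$ and the computation above give
\[
 \sum_{j,k\neq m}c_jc_k\,\Pi_{jk}=\int_{\partial\riem}u\ast du=\iint_{\riem}du\wedge\ast du=(du,du)_{\mathcal{A}_{\mathrm{harm}}(\riem)}\ge 0,
\]
so the matrix is positive semidefinite and its quadratic form is exactly the Dirichlet energy of $u$. For strict positivity, suppose the form vanishes. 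Then $du=0$, so $u$ is constant on the connected surface $\riem$. On $\partial_m\riem$ every $\omega_j$ with $j\neq m$ vanishes, hence $u=0$ there, and by continuity $u\equiv 0$ on $\mathrm{cl}(\riem)$. Reading off the boundary value on $\partial_k\riem$ for each $k\neq m$ gives $c_k=u|_{\partial_k\riem}=0$, so the form is positive definite.

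The omission of a single index $m$ is essential rather than cosmetic: the sum $\sum_{k=1}^n\omega_k$ is harmonic, continuous on the closure, and equal to $1$ on every boundary curve, hence identically $1$, so $\sum_k d\omega_k=0$ and the full $n\times n$ matrix annihilates the all-ones vector. I expect no serious obstacle here; the only point requiring care is the justification of Stokes' theorem up to the boundary, and this is immediate because the borders are analytic and the harmonic measures extend harmonically to a neighbourhood in $\riem^d$, so no quasicircle boundary-regularity analysis is needed for this particular statement.
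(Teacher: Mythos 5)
Your proof is correct, and it is the standard Gram-matrix/Dirichlet-energy argument; the paper itself states Theorem \ref{th:period_matrix_invertible} without proof, recalling it from \cite{Schippers_Staubach_scattering_II}, where essentially this same argument (Stokes to identify $\Pi_{jk}$ with $(d\omega_j,d\omega_k)$, then the energy identity plus the vanishing of $u$ on the omitted boundary curve for definiteness) is used. Your closing observation that the full $n\times n$ matrix kills the all-ones vector because $\sum_k\omega_k\equiv 1$ correctly explains why one index must be omitted.
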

 
 Thus $\Pi_{jk}$, $j,k = 1,\ldots\hat{m},\ldots,n$ is an invertible matrix, and we can specify $n-1$ of the boundary periods of elements of $\ast \mathcal{A}_{\mathrm{hm}}(\riem)$.
 \begin{corollary}  \label{co:boundary_periods_specified_starmeasure}
  Let $\riem$ be of type $(g,n)$ and $\lambda_1,\ldots,\lambda_n \in \mathbb{C}$ be such that $\lambda_1 + \cdots + \lambda_n =0$.  Then there is an $\alpha \in \ast \mathcal{A}_{\mathrm{hm}}(\riem)$ such that 
  \begin{equation}\label{periodjaveln}
    \int_{\partial_k \riem} \alpha = \lambda_k   
  \end{equation}   
  for all $k=1,\ldots,n$.  
 \end{corollary}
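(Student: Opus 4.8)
The plan is to realize the desired form $\alpha$ as an explicit linear combination of the $\ast d\omega_k$ and to reduce the period conditions \eqref{periodjaveln} to a linear system governed by the boundary period matrix $\Pi$. First I would record the basic linear relation among the harmonic measures: the function $\sum_{k=1}^n \omega_k$ is harmonic, continuous on $\mathrm{cl}(\riem)$, and takes the boundary value $1$ on every $\partial_j \riem$, so by uniqueness of the Dirichlet problem it equals the constant $1$. Consequently $\sum_{k=1}^n d\omega_k = 0$, and hence $\sum_{k=1}^n \ast d\omega_k = 0$. This shows that $\ast d\omega_m$ is redundant, so that every element of $\ast \mathcal{A}_{\mathrm{hm}}(\riem)$ can be written as $\alpha = \sum_{l \neq m} c_l \, \ast d\omega_l$ for complex scalars $c_l$.

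Next I would write out the period conditions. For each $k$,
\[ \int_{\partial_k \riem} \alpha = \sum_{l \neq m} c_l \int_{\partial_k \riem} \ast d\omega_l = \sum_{l \neq m} \Pi_{kl}\, c_l, \]
using Definition \ref{periodmatrix}. Restricting to the indices $k \neq m$, the requirement $\int_{\partial_k \riem} \alpha = \lambda_k$ becomes the linear system $\sum_{l \neq m} \Pi_{kl}\, c_l = \lambda_k$ with coefficient matrix $(\Pi_{kl})_{k,l \neq m}$. By Theorem \ref{th:period_matrix_invertible} this matrix is symmetric and positive definite, hence invertible, so the system has a unique solution $(c_l)_{l \neq m}$. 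This defines $\alpha$ and guarantees \eqref{periodjaveln} for all $k \neq m$.

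It remains to check the single omitted condition $\int_{\partial_m \riem} \alpha = \lambda_m$, and this is where the hypothesis $\lambda_1 + \cdots + \lambda_n = 0$ enters. The key observation is that the total boundary period of each $\ast d\omega_l$ vanishes: since $\omega_l$ is harmonic, $d \ast d\omega_l = 0$, so Stokes' theorem gives $\int_{\partial \riem} \ast d\omega_l = \iint_\riem d \ast d\omega_l = 0$. Summing over $l$, the total period of $\alpha$ is zero, i.e. $\sum_{k=1}^n \int_{\partial_k \riem} \alpha = 0$. Combining this with the already-established identities for $k \neq m$ yields $\int_{\partial_m \riem} \alpha = -\sum_{k \neq m} \lambda_k = \lambda_m$, the last equality being exactly the hypothesis on the $\lambda_k$.

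I do not expect a serious obstacle here; the only points requiring care are the reduction modulo the relation $\sum_k \ast d\omega_k = 0$, so that the relevant matrix is the invertible $(n-1)\times(n-1)$ block rather than the singular full matrix, and the verification that the compatibility condition $\sum_k \lambda_k = 0$ is precisely what makes the omitted $m$-th equation consistent. Both reflect the fact that the total boundary period of any closed form vanishes, which is the structural reason the corollary must impose the constraint $\sum_k \lambda_k = 0$.
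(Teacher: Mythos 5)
Your proof is correct and follows exactly the route the paper intends: the corollary is presented as an immediate consequence of Theorem \ref{th:period_matrix_invertible}, using the invertibility of the reduced $(n-1)\times(n-1)$ boundary period matrix to prescribe $n-1$ of the periods, with the omitted one forced by the vanishing of the total boundary period and the hypothesis $\lambda_1+\cdots+\lambda_n=0$. Your write-up simply supplies the details (the relation $\sum_k \ast d\omega_k=0$ and the Stokes argument) that the paper leaves implicit.
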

\end{subsection}
\begin{subsection}{Green's functions} 
Another basic notion which is of fundamental importance in our investigations is that of Green's functions.
 
 \begin{definition}\label{defn:greens function} Let $\riem$ be a type $(g,n)$ surface.
For fixed $z \in \riem$, we define \emph{Green's function of} $\riem$ to be a function $\gls{greenb}(w;z)$ such that 
 \begin{enumerate}
     \item for a local coordinate $\phi$ vanishing at $z$ the function $w\mapsto G_\Sigma(w;z) + \log|\phi(w)|$ is harmonic in an open neighbourhood
     of $z$;
     \item $\lim_{w \rightarrow \zeta} G_\Sigma(w;z) =0$ for any $\zeta \in \partial \riem$.   
 \end{enumerate}
 \end{definition}
  That such a function exists, follows from \cite[II.3 11H, III.1 4D]{Ahlfors_Sario}, considering $\riem$ to be a subset of its double $\riem^d$.\\  
\begin{definition}
 For compact surfaces $\mathscr{R}$, one defines the \emph{Green's function} $\gls{greenc}$ (see e.g. \cite{Royden}) as the unique function
 $\mathscr{G}(w,w_0;z,q)$ satisfying 
 \begin{enumerate}
  \item $\mathscr{G}$ is harmonic in $w$ on $\mathscr{R} \backslash \{z,q\}$;
  \item for a local coordinate $\phi$ on an open set $U$ containing $z$, $\mathscr{G}(w,w_0;z,q) + \log| \phi(w) -\phi(z) |$ is harmonic 
   for $w \in U$;
  \item for a local coordinate $\phi$ on an open set $U$ containing $q$, $\mathscr{G}(w,w_0;z,q) - \log| \phi(w) -\phi(q) |$ is harmonic 
   for $w \in U$;
  \item $\mathscr{G}(w_0,w_0;z,q)=0$ for all $z,q,w_0$.  
 \end{enumerate}
\end{definition}

 The existence of such a function is a standard fact about Riemann surfaces, see for example \cite{Royden}.  $\mathscr{G}$ is also harmonic in $z$ wherever it is non-singular, and satisfies a number of identities.
 
 \begin{remark}
  The condition (4) involving the point $w_0$ simply determines an arbitrary additive constant, and is not of any interest in the paper. 
 This is because of the property $\mathscr{G}(w,w_1;z,q)  = \mathscr{G}(w,w_0;z,q) - \mathscr{G}(w_1,w_0;z,q)$ (see e.g. \cite{Royden}), which makes $\partial_w \mathscr{G}$ is independent
 of $w_0$, and only such derivatives enter in this paper.  For this reason, we usually leave $w_0$ out of the expression for $\mathscr{G}$.
 \end{remark}

 Green's function is conformally invariant.  That is, if $\riem$ is of type $(g,n)$, and $f:\riem \rightarrow \riem'$ is conformal, then 
 {\begin{equation} \label{eq:Greens_conf_inv_gntype}
  G_{\riem'}(f(w);f(z)) = G_{\riem}(w;z). 
 \end{equation}}
 
 Similarly if $\mathscr{R}$ is compact and $f:\mathscr{R} \rightarrow \mathscr{R}'$ is a biholomorphism, then 
 \begin{equation} \label{eq:Greens_conf_inv_compact}
   \mathscr{G}_{\mathscr{R}'}(f(w),f(w_0);f(z);f(q)) = \mathscr{G}_{\mathscr{R}}(w,w_0;z,q). 
 \end{equation}

 These follow from uniqueness of Green's function; in the case of type $(g,n)$ surfaces, one also needs the fact that a biholomorphism extends to a homeomorphism of the boundary curves. 

 \end{subsection}
\begin{subsection}{Sobolev spaces on Riemann surfaces with boundary} \label{se:prelim_sobolev}

     We shall now use the harmonic measure and Green's function to define a conformally invariant Sobolev space on Riemann surfaces with boundary. For details see \cite{Schippers_Staubach_scattering_I}.\\  
 Let $d\omega_k$ be the harmonic measures given in Definition \ref{def:harmonic measure}. For a collar neighbourhood $U_k$ of $\partial_k \riem$ and $h_k \in \mathcal{D}_{\mathrm{harm}}(U_k)$, 
 we can fix a simple closed analytic curve $\gamma_k$ which is isotopic to $\partial_k \riem$, and define
 \begin{equation}\label{defn:integration over rough}
  \int_{\partial_k \riem}  h_k \ast d \omega_k: =  \iint_{V_k} dh_k \wedge \ast d\omega_k + \int_{\gamma_k} h_k \ast d \omega_k  
 \end{equation}   
 where $V_k$ is the region bounded by $\partial_k \riem$ and $\gamma_k$.  Here $\partial_k \riem$ is oriented positively with respect to $\riem$ and $\gamma_k$ has the same orientation as $\partial_k \riem$ (this is independent of $\gamma_k$).
 Equivalently, for a curves $\Gamma_r = \phi^{-1}(|z|=r)$ defined by a collar chart $\phi$, 
 \[   \int_{\partial_k \riem}  h_k \ast d \omega_k  = \lim_{r \nearrow 1} \int_{\Gamma_r}  h_k \ast d \omega_k.  \]
Now given $h_k \in \mathcal{D}_{\mathrm{harm}}(\riem)$ we set
   \begin{equation}\label{harmonisk matt}
    \mathscr{H}_k  := \int_{\partial_k \riem} h_k \ast d\omega_k.    
   \end{equation} 
\begin{definition}\label{def:norm on H1confU}
Let $\riem$ be a bordered surface of type $(g,n)$ and 
  let $U_k \subseteq \riem$ be collar neighbourhoods of $\partial_k \riem$ for $k=1,\ldots,n$. Set $U = U_1 \cup \cdots \cup U_n$. By ${H}^{1}_{\mathrm{conf}}(U)$ we denote the harmonic Dirichlet space $\mathcal{D}_{\mathrm{harm}}(U)$ endowed with the norm
  \begin{equation}\label{defn:Dconf norm}
 \| h \|_{H^1_{\mathrm{conf}}(U)} := \Big(\| h \|^2_{\mathcal{D}_{\mathrm{harm}}(U)} + 
  \sum_{k=1}^{n}   |\mathscr{H}_k |^2\Big)^{\frac{1}{2}} 
 \end{equation} 
 for $n>1$.  
 In the case that $n=1$,  fix a point $p\in \riem \setminus U_1$ and define instead 
  \begin{equation} \label{eq:constant_hk_definition_onecurve}
    \mathscr{H}_1 := \int_{\partial_1 \riem} h_1 \ast d G_{\riem}(w,p),
  \end{equation}
  where $G_{\riem}(w,p)$ is Green's function of $\riem.$
  
   For the Riemann surface $\riem$, assuming that $\riem$ is connected, we need only one boundary integral to obtain a norm.  If $n >1$, we can choose any fixed boundary curve $\partial_n \riem$ say, and define the norm 
 \begin{equation}\label{equivalent sobolev norm}  
     \| h \|_{{H}^1_{\mathrm{conf}}(\riem)} := \left( \| h \|^2_{\mathcal{D}_{\mathrm{harm}}(\riem)} + |\mathscr{H}_n |^2 \right)^{1/2},
 \end{equation}  
where any of the $\mathscr{H}_k$ could in fact be used in place of $\mathscr{H}_n$. The resulting norms are equivalent. In the case that $n=1$ we use (\ref{eq:constant_hk_definition_onecurve}) to define $\mathscr{H}_1$.
 \end{definition}

\end{subsection}
\begin{subsection}{Boundary values of harmonic forms and functions}  \label{se:prelim_BVs_harm} In this section, we collect some results about boundary values of Dirichlet bounded functions and $L^2$ harmonic one forms, obtained in \cite{Schippers_Staubach_scattering_I,Schippers_Staubach_scattering_II}. 

We also consider a space of boundary values of Dirichet-bounded harmonic functions. In the following, let $\Gamma$ be a border of a Riemann surface $\riem$, which is homeomorphic to $\mathbb{S}^1$. For any such $\Gamma$ there is a biholomorphism 
 \[  \phi:U \rightarrow \mathbb{A}_{r,1} \]
 where $U$ is an open set in $\riem$ bounded by two Jordan curves, one of which is $\Gamma$, and $\mathbb{A}_{r,1}$ is an annulus $r<|z|<1$. We call this a collar chart of $\Gamma$. Such a $\phi$ extends homeomorphically to a map from $\Gamma$ onto $\mathbb{S}^1$; in fact, viewing $\Gamma$ as an analytic curve in the double of $\riem$, $\phi$ has a biholomorphic extension to an open neighbourhood of $\Gamma$. 
 
 Given any $h \in \mathcal{D}_{\mathrm{harm}}(\riem)$, $h$ has ``conformally non-tangential'' or ``CNT'' boundary values on $\Gamma$. That is, given a collar chart $\phi$ of $\Gamma$ , $h \circ \phi^{-1}$ has non-tangential boundary values except possibly on a Borel set of logarithmic capacity zero in $\mathbb{S}^1$; we define the CNT boundary value of $h$ at $p$ to be the non-tangential boundary value of $h \circ \phi^{-1}(p)$.  Define a null set on $\Gamma$ to be the image of a logarithmic set of capacity zero under $\phi^{-1}$. Let $\mathcal{H}(\Gamma)$ denote the set of functions on $\Gamma$ arising as CNT boundary values of elements of $\mathcal{D}_{\mathrm{harm}}(\riem)$, where we say two functions are the same if they agree except possibly on a null set.  
 It can be shown that $\mathcal{H}(\Gamma)$ is independent of the choice of collar chart.  The definition also extends immediately to finite collections of distinct borders $\Gamma = \Gamma_1 \cup \cdots \cup \Gamma_n$.  It can also be shown that $\mathcal{H}(\Gamma)$ is the set of CNT boundary values of elements of $\mathcal{D}_{\mathrm{harm}}(U)$ for any collar neighbourhood $U$ of $\Gamma$. 
 
 Treating $\Gamma$ as an analytic curve in the double, we can define the Sobolev space $H^{1/2}(\Gamma)$. Every element of $H^{1/2}(\Gamma)$ uniquely defines an element of $\mathcal{H}(\Gamma)$. 

 We also define the homogeneous space $\dot{\mathcal{H}}(\Gamma)$ of elements of $\mathcal{H}(\Gamma)$ modulo functions which are constant on each connected component of $\Gamma$. Every element of the homogeneous Sobolev space $\dot{H}^{1/2}(\Gamma)$ defines a unique element of $\dot{\mathcal{H}}(\Gamma)$.  

 The possible boundary values of $L^2$ one-forms on $\Gamma$ can be identified with $H^{-1/2}(\Gamma)$. These boundary values were also described as equivalence classes of one-forms in the following way in \cite{Schippers_Staubach_scattering_II}, where details and proofs can be found.  Given two $L^2$ harmonic one-forms $\alpha_1$ and $\alpha_2$ defined near $\Gamma$, we say that they are equivalent, $\alpha_1 \sim \alpha_2$, if and only if there is a collar neighbourhood $U$ of $\Gamma$ and a harmonic functions $h_k$, $k=1,2$ such that $dh_k = \alpha_k$ and the CNT boundary values of $h_1 - h_2$ are constant on $\Gamma$.  Then $\mathcal{H}'(\Gamma)$ is defined to be the set of equivalence classes of such $L^2$ harmonic one-forms. 
  
 We define a norm on $\mathcal{H}'(\Gamma)$ as follows. First define a norm on $\mathcal{H}'(\mathbb{S}^1)$: given any $[\alpha]$ there is a representative of the form
 \begin{equation} \label{eq:alpha_local_form}
   \alpha = f(z) dz + \overline{g(z)} d \overline{z} + \frac{\lambda}{4 \pi i} \left( \frac{dz}{z} - \frac{d\bar{z}}{\bar{z}} \right).
 \end{equation} 
 We define 
 \[  \| [\alpha] \|^2_{\mathcal{H}'(\mathbb{S}^1)} = \| f(z) dz + \overline{g(z)} d\bar{z} \|^2_{\mathcal{A}_{\mathrm{harm}}(\disk)} + |\lambda|^2. \]
  For general $\Gamma$, given a collar chart $\phi:U \rightarrow \mathbb{A}_{r,1}$, define the norm of an $[\alpha] \in \mathcal{H}'(\Gamma)$ via the pullback $(\phi^{-1})^*$: that is, 
  \[  \| [\alpha] \|^2_{\mathcal{H}'(\Gamma)} = \| [(\phi^{-1})^*\alpha] \|^2_{\mathcal{H}'(\mathbb{S}^1)}.  \]

  This norm depends on the choice of collar chart. However, it was shown that the norms induced by different collar charts are comparable. In fact they are all comparable to the $H^{-1/2}(\Gamma)$ norm under the following identification. Given $[\alpha]$ we define a linear functional 
  \begin{align*}
   L_{[\alpha]}: H^{1/2}(\Gamma) & \longrightarrow \mathbb{C} \\ 
   h & \mapsto \lim_{r \nearrow 1} \int_{\Gamma_r} \alpha H 
  \end{align*}
  where $H$ is an element of $\mathcal{D}_{\mathrm{harm}}(U)$ defined on a collar neighbourhood $U$ of $\Gamma$ with CNT boundary values $h$, $\alpha$ is a representative of $[\alpha]$, and $\Gamma_r$ are curves $\phi^{-1}(|z|=r)$ for a collar chart $\phi:U \rightarrow \mathbb{A}_{R,1}$. It can be shown that the integral is independent of all choices, and thus we have a well-defined map into the dual $H^{-1/2}(\Gamma)$ of $H^{1/2}(\Gamma)$:
  \begin{align*}
    \mathcal{H}'(\Gamma) & \longrightarrow H^{-1/2}(\Gamma) \\
    [\alpha] & \mapsto L_{[\alpha]}. 
  \end{align*} 
   With the notation above, it is possible to show that the integral
 \[  \int_{\Gamma} [\alpha] := \lim_{r \nearrow 1} \int_{\Gamma_r} \alpha \]
 is well-defined. Defining 
 \[ \dot{\mathcal{H}}'(\Gamma) = \left\{ [\alpha] \in \mathcal{H}'(\Gamma) \,:\, \int_{\Gamma} [\alpha] =0 \right\} \]
 we define the norm similarly, but dropping the $\lambda$ in the expression (\ref{eq:alpha_local_form}).  
 We then have that $L_{[\alpha]}$ is a well-defined linear functional on $\dot{H}^{-1/2}(\Gamma)$. 
  
  These identifications are bounded isomorphisms.
 \begin{theorem}  \label{th:Honehalf_reformulation}
  Let $\Gamma$ be a border of a Riemann surface which is homeomorphic to $\mathbb{S}^1$. The map 
  \begin{align*}
      \mathcal{H}'(\partial_k \riem) & \rightarrow H^{-1/2}(\partial_k \riem) \\
      [\alpha] & \mapsto L_{[\alpha]} 
  \end{align*}
  is a bounded isomorphism. Similarly 
  \begin{align*}
      \dot{\mathcal{H}}'(\partial_k \riem) & \rightarrow \dot{H}^{-1/2}(\partial_k \riem) \\
      [\alpha] & \mapsto L_{[\alpha]} 
  \end{align*}
  is a bounded isomorphism.
 \end{theorem}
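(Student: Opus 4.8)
The plan is to reduce the statement to the model case $\Gamma = \mathbb{S}^1$ and then carry out an explicit Fourier computation. For the reduction, fix a collar chart $\phi: U \to \mathbb{A}_{r,1}$ of $\Gamma$. The norm on $\mathcal{H}'(\Gamma)$ is defined by pullback to be exactly $\|[(\phi^{-1})^*\alpha]\|_{\mathcal{H}'(\mathbb{S}^1)}$, so $[\alpha]\mapsto [(\phi^{-1})^*\alpha]$ is an isometry onto $\mathcal{H}'(\mathbb{S}^1)$. Since one-form integrals and the notion of CNT boundary value are conformally natural, the pairing transports as $L_{[\alpha]}(h) = L_{[(\phi^{-1})^*\alpha]}(h\circ\phi^{-1})$; and because $\phi$ extends biholomorphically across $\Gamma$ viewed as an analytic curve in the double, the composition $h\mapsto h\circ\phi^{-1}$ is a bounded isomorphism $H^{1/2}(\Gamma)\to H^{1/2}(\mathbb{S}^1)$, whose adjoint gives a bounded isomorphism $H^{-1/2}(\mathbb{S}^1)\to H^{-1/2}(\Gamma)$. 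Composing these, the theorem for $\Gamma$ follows from the theorem for $\mathbb{S}^1$, and the same argument works verbatim for the homogeneous spaces.

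On $\mathbb{S}^1$ I would take a representative of the form \eqref{eq:alpha_local_form} on $\disk$, writing the holomorphic parts as power series $f(z) = \sum_{k\geq 0} a_k z^k$ and $g(z) = \sum_{k\geq 0} b_k z^k$, so that $\|f\,dz\|^2_{\mathcal{A}_{\mathrm{harm}}(\disk)} \approx \sum_k |a_k|^2/(k+1)$ and likewise for $g$. The functional $L_{[\alpha]}$ is determined by its values $\ell_n := L_{[\alpha]}(e^{in\theta})$, computed by taking the harmonic extension $H = z^n$ for $n\geq 0$ and $H = \bar z^{|n|}$ for $n<0$ and evaluating $\lim_{r\nearrow 1}\int_{|z|=r}\alpha H$ term by term with residues. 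Using $\bar z = r^2/z$ on $|z|=r$, a short calculation yields
\begin{equation*}
 \ell_0 = \lambda, \qquad \ell_n = -2\pi i\,\overline{b_{n-1}} \ (n\geq 1), \qquad \ell_{-m} = 2\pi i\, a_{m-1} \ (m\geq 1).
\end{equation*}

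Since the dual norm of $L$ as a functional on $H^{1/2}(\mathbb{S}^1)$ is $\|L\|^2_{H^{-1/2}} = \sum_n (1+|n|)^{-1}|\ell_n|^2$, substituting the formulas above gives
\begin{equation*}
 \|L_{[\alpha]}\|^2_{H^{-1/2}(\mathbb{S}^1)} = |\lambda|^2 + 4\pi^2\sum_{k\geq 0}\frac{|a_k|^2}{k+2} + 4\pi^2\sum_{k\geq 0}\frac{|b_k|^2}{k+2}.
\end{equation*}
Because $1/(k+2)\approx 1/(k+1)$, this is comparable to $|\lambda|^2 + \|f\,dz\|^2_{\mathcal{A}_{\mathrm{harm}}(\disk)} + \|\overline{g}\,d\bar z\|^2_{\mathcal{A}_{\mathrm{harm}}(\disk)} = \|[\alpha]\|^2_{\mathcal{H}'(\mathbb{S}^1)}$. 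This two-sided estimate establishes boundedness and coercivity at once. Surjectivity is immediate from the explicit formulas: given any $(\ell_n)$ with $\sum(1+|n|)^{-1}|\ell_n|^2<\infty$, the three relations above uniquely determine $\lambda$, $(a_k)$ and $(b_k)$ with finite $\mathcal{H}'(\mathbb{S}^1)$ norm, hence a preimage $[\alpha]$; coercivity gives injectivity and a bounded inverse, so the map is a bounded isomorphism. For the homogeneous statement, the condition $\int_\Gamma[\alpha]=0$ forces $\lambda = \ell_0 = 0$, and one pairs with $\dot H^{1/2}$, in which the $n=0$ mode is quotiented away; the identical computation restricted to $n\neq 0$ gives the result.

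The routine part is the residue computation; the step requiring care is the reduction to the circle, where one must check that the form space $\mathcal{H}'$, the test space $H^{1/2}$, and the bilinear pairing $L$ all transport compatibly under the collar chart, together with the well-definedness of the limit defining $L_{[\alpha]}$ and its independence of the representative $\alpha$ and of the harmonic extension $H$. These last facts are exactly those recalled before the statement, so the genuine work is confined to confirming the transport of the Sobolev norms under the analytic collar chart and then to the explicit comparison of norms above.
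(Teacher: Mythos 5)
This theorem is one of the results recalled in Section \ref{se:prelim_BVs_harm} from the companion paper \cite{Schippers_Staubach_scattering_II}; the present paper contains no proof of it, so there is nothing in-text to compare your argument against. On its own merits your proof is correct, and it is the natural one given how the objects are set up here: the $\mathcal{H}'(\Gamma)$-norm is \emph{defined} by pullback under a collar chart, and $H^{\pm 1/2}(\Gamma)$ is defined via $\Gamma$ as an analytic curve in the double across which $\phi$ extends biholomorphically, so the reduction to $\mathbb{S}^1$ transports all three ingredients (the form space, the test space, and the pairing, using $\Gamma_r=\phi^{-1}(|z|=r)$) exactly as you say. Your residue computation checks out: with the representative \eqref{eq:alpha_local_form} one gets $\ell_0=\lambda$, $\ell_n=-2\pi i\,\overline{b_{n-1}}$ for $n\ge 1$ and $\ell_{-m}=2\pi i\,a_{m-1}$ for $m\ge 1$, and since $\|f\,dz\|^2_{\mathcal{A}_{\mathrm{harm}}(\disk)}\approx\sum_k|a_k|^2/(k+1)$ the two-sided comparison with $\sum_n(1+|n|)^{-1}|\ell_n|^2$ follows, giving boundedness, injectivity with bounded inverse, and surjectivity simultaneously; the homogeneous case is the same computation with the $n=0$ mode removed.

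The one step I would ask you to write out is the passage from the values $\ell_n=L_{[\alpha]}(e^{in\theta})$ to the identity $L_{[\alpha]}(h)=\sum_n h_n\ell_n$ for general $h\in H^{1/2}(\mathbb{S}^1)$, which is what licenses the dual-norm formula. You may either invoke the fact, recalled just before the theorem, that $L_{[\alpha]}$ is already known to be a well-defined element of $H^{-1/2}(\Gamma)$ (so that agreement on trigonometric polynomials determines it), or prove it directly by expanding $\int_{|z|=r}\alpha H$ as an absolutely convergent series for $r<1$ and checking, via Cauchy--Schwarz on the coefficients, that the partial sums are bounded uniformly in $r$ by $\|[\alpha]\|_{\mathcal{H}'(\mathbb{S}^1)}\|h\|_{H^{1/2}}$. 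Either way the gap is routine, and the rest of the argument stands.
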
 
 \begin{remark} The theorem was stated for a border of a type $(g,n)$ Riemann surface, but in fact the proof only assumes that $\Gamma$ is a border homeomorphic to $\mathbb{S}^1$. 
 \end{remark} 

 If $\Gamma = \Gamma_1 \cup \cdots \cup \Gamma_n$ is a finite collection of borders, then we can define the spaces 
 \[ {\mathcal{H}}'(\Gamma) = {\mathcal{H}}'(\Gamma_1) \times \cdots \times {\mathcal{H}}'(\Gamma_n) \] 
 and
 \[ \dot{\mathcal{H}}'(\Gamma) = \dot{\mathcal{H}}'(\Gamma_1) \times \cdots \times \dot{\mathcal{H}}'(\Gamma_n) \] 
 and endow them with the direct sum norms.
\end{subsection}

\begin{subsection}{Overfare of functions} \label{se:prelim_overfare_functions}
 Given a compact Riemann surface cut into two pieces $\riem_1$ and $\riem_2$ by a collection of quasicircles, we define the following process which we call ``overfare''. Given a Dirichlet bounded harmonic function $h_1$, its overfare is the Dirichlet bounded harmonic function $h_2$ on $\riem_2$ with the same CNT boundary values as $h_1$ on $\partial \riem_1 = \partial \riem_2$.  This process is well-defined and bounded. Boundedness requires extra assumptions, which depend on whether one wants to control constants or not. 

 We now make this precise. Details and proofs can be found in \cite{Schippers_Staubach_scattering_I}. 
 Consider a compact Riemann surface $\mathscr{R}$ separated into two ``pieces'' $\riem_1$ and $\riem_2$ by a collection of quasicircles. It is possible for one or both of $\riem_1$ and $\riem_2$ to be disconnected. The precise definition of ``separating'' is the following.
  \begin{definition}  \label{de:separating_complex}
 Let $\mathscr{R}$ be a compact Riemann surface, and let $\Gamma_1,\ldots,\Gamma_m$ be a collection of  quasicircles in $\mathscr{R}$.  Denote $\Gamma = \Gamma_1 \cup \cdots \cup \Gamma_m$.  We say that $\Gamma$ \emph{separates} $\mathscr{R}$ into $\riem_1$ and $\riem_2$ if 
 \begin{enumerate}
     \item there are doubly-connected neighbourhoods $U_k$ of $\Gamma_k$ for $k=1,\ldots,n$   such that $U_k \cap U_j$ is empty for all $j \neq k$, 
     \item one of the two connected components of $U_k \backslash \Gamma_k$ is in $\riem_1$, while the other is in $\riem_2$; 
     \item $\mathscr{R} \backslash \Gamma = \riem_1 \cup \riem_2$;
     \item $\mathscr{R} \backslash \Gamma$ consists of finitely many connected components;
     \item $\riem_1$ and $\riem_2$ are disjoint.
 \end{enumerate}
 \end{definition}
 In this case we also call $\Gamma$ a separating complex of quasicircles. 
 
 As was shown in in \cite[Proposition 3.33]{Schippers_Staubach_scattering_I}, it turns out that one can identify the borders $\partial \riem_1$ and $\partial \riem_2$ pointwise with $\Gamma$. 

 Now fix a single quasicircle $\Gamma_k$ in the complex. Let $U_1$ and $U_2$ be collar neighbourhoods of $\Gamma_k$ in $\riem_1$ and $\riem_2$ respectively. In principle the set of CNT boundary values of harmonic functions in $\mathcal{D}_{harm}(U_1)$ might not agree with the set of boundary values of harmonic functions in $\mathcal{D}_{harm}(U_2)$.  In fact, it was shown in \cite{Schippers_Staubach_scattering_I} that they agree, so that we can denote this $\mathcal{H}(\Gamma_k)$.  

 More can be said.
  \begin{theorem}[Existence of overfare]\label{thm:bounded_overfare_existence}  Let $\mathscr{R}$ be a compact Riemann surface and let $\Gamma = \Gamma_1 \cup \cdots \cup \Gamma_m$ be a collection of  quasicircles separating $\mathscr{R}$ into $\riem_1$ and $\riem_2$. 
  Let $h_1 \in \mathcal{D}_{\mathrm{harm}}(\riem_1)$.  There is a $h_2 \in \mathcal{D}_{\mathrm{harm}}(\riem_2)$ whose \emph{CNT} boundary values agree with those of $h_1$ up to a null set, and this $h_2$ is unique.   
  \end{theorem}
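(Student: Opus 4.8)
The plan is to realize overfare as the composition of two maps whose properties are essentially recorded in the preliminaries: first take the CNT boundary values of $h_1$ on $\Gamma$, then solve a Dirichlet problem on $\riem_2$ with those boundary data. The linchpin is the already-established fact that the two sides of a separating quasicircle share a common boundary-value space $\mathcal{H}(\Gamma)$; granting this, existence is nearly immediate, and the real content is uniqueness, which amounts to injectivity of the trace map on $\mathcal{D}_{\mathrm{harm}}(\riem_2)$.

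For existence, I would argue as follows. Using the pointwise identification of $\partial \riem_1$, $\Gamma$, and $\partial \riem_2$ from \cite[Proposition 3.33]{Schippers_Staubach_scattering_I}, together with the CNT boundary value theory of Section \ref{se:prelim_BVs_harm}, the function $h_1$ determines a well-defined element $u \in \mathcal{H}(\Gamma)$. By definition $\mathcal{H}(\Gamma)$ is the set of CNT boundary values of elements of $\mathcal{D}_{\mathrm{harm}}(\riem_2)$; here one uses the crucial equality of the boundary-value spaces obtained from collar neighbourhoods on the two sides of $\Gamma$ established in \cite{Schippers_Staubach_scattering_I}, combined with the fact that this collar space coincides with the surface trace space (equivalently, solvability of the Dirichlet problem on $\riem_2$). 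Hence there exists $h_2 \in \mathcal{D}_{\mathrm{harm}}(\riem_2)$ whose CNT boundary values equal $u$, i.e. agree with those of $h_1$ off a null set.

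For uniqueness, suppose $h_2$ and $\tilde h_2$ both lie in $\mathcal{D}_{\mathrm{harm}}(\riem_2)$ and have the same CNT boundary values off a null set. Then $w := h_2 - \tilde h_2$ is Dirichlet-bounded and harmonic on $\riem_2$ with CNT boundary values zero off a null set, and it suffices to show $w \equiv 0$. I would establish this by an energy identity: writing the Dirichlet seminorm of $w$ as a boundary pairing via a Green/Stokes computation (using the rough-boundary integral \eqref{defn:integration over rough} and the $H^{1/2}$--$H^{-1/2}$ duality of Theorem \ref{th:Honehalf_reformulation}), the vanishing of the trace of $w$ forces $\Vert w \Vert_{\mathcal{D}_{\mathrm{harm}}(\riem_2)} = 0$; thus $dw = 0$, so $w$ is locally constant, and vanishing boundary values then give $w = 0$ on each component. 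Equivalently, by conformal invariance one may pull back to a collar annulus and invoke the classical boundary uniqueness for Dirichlet-class harmonic functions, propagating the vanishing inward by the maximum principle.

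The main obstacle is analytic rather than structural: the borders $\Gamma_k$ are merely quasicircles, so the trace, the Dirichlet solvability, and above all the Green's identity used in the uniqueness step cannot be taken for granted and must be justified across a rough boundary where boundary values exist only off sets of capacity zero. These are exactly the delicate facts developed in \cite{Schippers_Staubach_scattering_I,Schippers_Staubach_scattering_II} and summarized in Sections \ref{se:prelim_BVs_harm}--\ref{se:prelim_overfare_functions}; once they are in hand, the assembly above is short. A secondary point to check is that $\riem_1$ and $\riem_2$ may be disconnected, so the argument should be carried out component by component, with the null-set and locally-constant ambiguities handled on each border separately.
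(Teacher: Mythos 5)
The paper does not actually prove this theorem: it is quoted from \cite{Schippers_Staubach_scattering_I} (``Details and proofs can be found in \cite{Schippers_Staubach_scattering_I}''), so there is no in-paper argument to compare yours against. That said, your outline is the natural one and is consistent with everything recorded in Sections \ref{se:prelim_BVs_harm}--\ref{se:prelim_overfare_functions}: existence by passing the CNT trace of $h_1$ through the two-sided identification of $\mathcal{H}(\Gamma)$ and then solving the Dirichlet problem on $\riem_2$, and uniqueness by showing that a Dirichlet-bounded harmonic function with quasi-everywhere vanishing CNT boundary values is locally constant, hence zero on each (necessarily bordered) component. Be aware that the entire mathematical content sits in the imported facts --- the equality of the collar trace spaces from the two sides of a quasicircle, the identification of the collar trace space with the surface trace space, and the limiting Stokes/Green identity across a rough border --- so what you have is an assembly of the results of \cite{Schippers_Staubach_scattering_I,Schippers_Staubach_scattering_II} rather than an independent proof; that is appropriate for this statement, and you flag it correctly. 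One concrete correction: your parenthetical alternative for uniqueness (``pull back to a collar annulus and propagate the vanishing inward by the maximum principle'') does not work as stated, since on a collar annulus you control boundary values on only one of the two boundary components and the maximum principle then gives nothing. Keep the energy-identity argument, or equivalently invoke uniqueness of the solution of the Dirichlet problem with quasi-everywhere prescribed boundary data on each connected component of $\riem_2$.
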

  
    This theorem motivates the definition of the following operator which plays an important role in the scattering theory that is developed here.
  \begin{definition}\label{defn:overfare operator} 
Thus we can define the \emph{overfare operator} $\gls{overfare}_{\riem_1,\riem_2}$ by   
  \begin{align*}
   \mathbf{O}_{\riem_1,\riem_2} : \mathcal{D}_{\mathrm{harm}}(\riem_1) & \rightarrow 
   \mathcal{D}_{\mathrm{harm}}(\riem_2) \\
   h_1 & \mapsto h_2    
 \end{align*}
  \end{definition}
 One obviously has that
 \[  \mathbf{O}_{\riem_2,\riem_1} \mathbf{O}_{\riem_1,\riem_2} = \text{Id} \]
 and of course one can switch the roles of $\riem_1$ and $\riem_2$.  
 
 The overfare operator is conformally invariant.  That is, if $f:\mathscr{R} \rightarrow \mathscr{R}'$ is a biholomorphism and we set $f(\riem_k) = \riem_k'$ for $k=1,2$ then 
 \begin{equation} \label{eq:overfare_conformally_invariant}
  \mathbf{O}_{\riem_1,\riem_2} \mathbf{C}_f = \mathbf{C}_f \mathbf{O}_{\riem_1',\riem_2'}.   
 \end{equation} 
 We will sue the abbreviated notation $\mathbf{O}_{1,2}$ and $\mathbf{O}_{2,1}$ when the underlying surfaces are clear from context.

 This operator is bounded under certain conditions \cite{Schippers_Staubach_scattering_I}. We summarize the main results here. One must assume that the originating surface is connected. 
 \begin{theorem}[Bounded overfare theorem for general quasicircles]\label{thm:bounded_overfare_Dirichlet} Let $\mathscr{R}$ be a compact Riemann surface and let $\Gamma = \Gamma_1 \cup \cdots \cup \Gamma_m$ be a collection of quasicircles separating $\mathscr{R}$ into $\riem_1$ and $\riem_2$.  Assume that $\riem_1$ is connected. 
  There is a constant $C$ such that 
     \[  \| \mathbf{O}_{1,2} h \|_{\mathcal{D}_{\mathrm{harm}}(\riem_2)}  \leq C \| h \|_{\mathcal{D}_{\mathrm{harm}}(\riem_1)}  \]
     for all $h \in \mathcal{D}_{\mathrm{harm}}(\riem_1)$.  
  \end{theorem}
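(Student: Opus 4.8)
The plan is to bound the overfare by building an explicit (non-harmonic) comparison function on $\riem_2$ with the correct boundary values and then invoking the Dirichlet minimization property of harmonic functions, so that no direct estimate on $\mathbf{O}_{1,2}h$ itself is needed. First I would exploit the quasicircle hypothesis: for each $k$ fix an orientation-reversing quasiconformal reflection $\sigma_k$ on a doubly-connected neighbourhood of $\Gamma_k$, fixing $\Gamma_k$ pointwise and interchanging its two complementary collars. Such a reflection exists precisely because $\Gamma_k$ is a quasicircle (Ahlfors' characterization of quasicircles). After shrinking I would take these neighbourhoods disjoint and inside the collar charts. Given $h \in \mathcal{D}_{\mathrm{harm}}(\riem_1)$, the composition $h \circ \sigma_k$ is then defined on the $\riem_2$-side collar of $\Gamma_k$, shares the conformally non-tangential boundary values of $h$ on $\Gamma_k$, and, because the Dirichlet integral is quasi-invariant under quasiconformal maps, obeys $\| d(h \circ \sigma_k) \|_{L^2}^2 \lesssim \| dh \|^2_{L^2(\text{collar in } \riem_1)}$.

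Next I would assemble these local pieces into a global comparison function $w$ on $\riem_2$. Choosing on each $\riem_2$-side collar a smooth cutoff $\chi_k$ equal to $1$ near $\Gamma_k$ and supported away from the inner boundary of the collar, I would set $w = \sum_k \chi_k\,(h \circ \sigma_k)$, extended by zero to the rest of $\riem_2$. By construction $w$ has finite Dirichlet energy and its boundary values on $\Gamma$ agree with those of $h$, hence with those of $\mathbf{O}_{1,2}h$. Its energy is controlled by $\| dh \|^2_{L^2(\riem_1)}$ together with the commutator terms $\| (h \circ \sigma_k)\, d\chi_k \|^2_{L^2}$; since $d\chi_k$ is supported on a subcollar bounded away from $\Gamma_k$, these reduce to $\| h \|^2_{L^2(K)}$ for a fixed compact connected set $K$ in the interior of $\riem_1$ containing all the reflected supports. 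Here the connectedness of $\riem_1$ enters: normalising $h$ by subtracting a constant so that $\int_K h = 0$ — which alters neither $\| h \|_{\mathcal{D}_{\mathrm{harm}}(\riem_1)}$ nor $\| \mathbf{O}_{1,2} h \|_{\mathcal{D}_{\mathrm{harm}}(\riem_2)}$, because on a connected source overfare sends constants to constants and these are seminorms — the Poincar\'e--Wirtinger inequality on $K$ gives $\| h \|_{L^2(K)} \lesssim \| dh \|_{L^2(K)} \le \| h \|_{\mathcal{D}_{\mathrm{harm}}(\riem_1)}$. Hence $\| w \|_{\mathcal{D}_{\mathrm{harm}}(\riem_2)} \lesssim \| h \|_{\mathcal{D}_{\mathrm{harm}}(\riem_1)}$.

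Finally, since $\mathbf{O}_{1,2}h$ is the harmonic function on $\riem_2$ with the same boundary values as $w$, the Dirichlet-orthogonal decomposition $w = \mathbf{O}_{1,2}h + g$ with $g$ of vanishing boundary values yields $\| \mathbf{O}_{1,2}h \|_{\mathcal{D}_{\mathrm{harm}}(\riem_2)} \le \| w \|_{\mathcal{D}_{\mathrm{harm}}(\riem_2)}$, which closes the estimate; this runs component-by-component on $\riem_2$ and so never uses connectedness of the target. The same argument may be read as a factorization $\mathbf{O}_{1,2} = E_2 \circ C_\psi \circ R_1$ through boundary values, where $R_1$ is the bounded trace to $\dot{H}^{1/2}(\Gamma)$ from the $\riem_1$ side, $C_\psi$ is composition with the quasisymmetric change of boundary parametrization between the two collar structures, and $E_2$ is the bounded harmonic extension into $\riem_2$; in that language the quasiconformal quasi-invariance used above is exactly the Nag--Sullivan boundedness of $C_\psi$ on $\dot{H}^{1/2}$. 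I expect the main obstacle to be the last step: rigorously justifying the Dirichlet minimization, i.e. verifying that $w - \mathbf{O}_{1,2}h$ truly has vanishing boundary values and lies in the orthogonal complement of the harmonic functions, when the interface is a curve as rough as a quasicircle. This is the delicate conformally non-tangential boundary-value analysis of \cite{Schippers_Staubach_scattering_I}, and it is also the step that forces the quasicircle hypothesis, since both the reflection $\sigma_k$ and the boundedness of $C_\psi$ fail for general Jordan curves.
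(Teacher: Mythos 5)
First, a point of reference: this theorem is not proved in the present paper at all --- it is imported verbatim from \cite{Schippers_Staubach_scattering_I}, so there is no in-paper proof to compare against. The approach actually taken there is essentially the factorization you mention only in passing at the end: restrict to collar neighbourhoods, pass to $\dot{H}^{1/2}$-type boundary values, transport them across the quasicircle via the (quasisymmetric) change of boundary parametrization, and extend harmonically into $\riem_2$, with the connectedness of $\riem_1$ entering through the control of the additive constants. Your primary route --- Ahlfors quasiconformal reflection, a cutoff-assembled competitor $w$, and the Dirichlet minimization principle --- is a genuinely different and more classical argument, and its architecture is sound: the quasi-invariance of the Dirichlet integral under quasiconformal maps, the reduction of the commutator terms to an interior $L^2$ bound, and the use of connectedness of $\riem_1$ (one normalizing constant, one Poincar\'e--Wirtinger inequality on a single connected compact set $K$) are all placed exactly where they belong, and the last of these correctly explains why the hypothesis cannot be dropped.

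That said, two steps in your outline carry essentially all of the analytic weight of the theorem and are asserted rather than established, so as written the proposal is a reduction, not a proof. (i) The claim that $h\circ\sigma_k$ has the same CNT boundary values on $\Gamma_k$ as $h$: CNT boundary values are defined through conformal collar charts on each side separately, with exceptional sets of logarithmic capacity zero, and a quasiconformal reflection need not preserve non-tangential approach regions or capacity-zero sets without argument; this is precisely the content of the identification $\mathcal{H}(\partial_k\riem_1)=\mathcal{H}(\partial_k\riem_2)$ in \cite{Schippers_Staubach_scattering_I}. (ii) The Dirichlet principle across a quasicircle: to conclude $\|\mathbf{O}_{1,2}h\|\le\|w\|$ you need $w-\mathbf{O}_{1,2}h$ to lie in the Dirichlet-orthogonal complement of the harmonic functions (the $W^{1,2}_0$ summand of the Royden decomposition), and ``vanishing CNT boundary values implies membership in $W^{1,2}_0$'' is a nontrivial regularity statement about quasidisks, not a formality. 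You flag (ii) yourself as the main obstacle, which is the right instinct, but since both (i) and (ii) are exactly where the quasicircle hypothesis does its work, a complete proof along your lines would have to supply them; once they are granted, the rest of your argument closes the estimate correctly.
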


 If $\riem_1$ is connected and $c$ is a constant, then $\mathbf{O}_{\riem_1,\riem_2} c$ is also constant on $\riem_2$ so the operator
 \begin{equation} \label{eq:O_dot_definition}
 \gls{doto}_{\riem_1,\riem_2} : \dot{\mathcal{D}}_{\mathrm{harm}}(\riem_1) \rightarrow  \dot{\mathcal{D}}_{\mathrm{harm}}(\riem_2) 
 \end{equation}
 is well-defined.  

 \begin{remark}
 If the originating surface (say $\riem_1$) is not connected, this is not well-defined. For example, if $\riem_1$ consists of $n$ disks and $\riem_2$ is connected, then by choosing $h$ to be constant on the connected components of $\riem_1$, the overfare $\dot{\mathbf{O}}_{1,2} h$ can be an arbitrary element of $\mathcal{A}_{\mathrm{harm}}(\riem_2)$.  
 \end{remark} 
 \begin{corollary}
  Let $\mathscr{R}$ be a compact Riemann surface, separated by quasicircles into $\riem_1$ and $\riem_2$.  Assume that $\riem_1$ is connected.  Then $\dot{\mathbf{O}}_{\riem_1,\riem_2}$ is bounded with respect to the Dirichlet norm.  
 \end{corollary}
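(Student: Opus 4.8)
The plan is to deduce the corollary directly from the Bounded Overfare Theorem (Theorem \ref{thm:bounded_overfare_Dirichlet}), observing that the inequality stated there is already phrased in terms of the Dirichlet seminorm, which is precisely the norm induced on the homogeneous space.

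First I would recall that $\| \cdot \|_{\mathcal{D}_{\mathrm{harm}}(U)}$ is only a seminorm: by definition $\| f \|^2_{\mathcal{D}_{\mathrm{harm}}(U)} = (df,df)_{\mathcal{A}_{\mathrm{harm}}(U)}$, which vanishes exactly when $f$ is locally constant. Consequently, for any locally constant $c$ we have $\| f - c \|_{\mathcal{D}_{\mathrm{harm}}(U)} = \| f \|_{\mathcal{D}_{\mathrm{harm}}(U)}$, since $d(f-c)=df$. This shows that the seminorm descends to a genuine norm on the quotient $\dot{\mathcal{D}}_{\mathrm{harm}}(U)$, and that the quotient norm of a class $[f]$ equals $\| f \|_{\mathcal{D}_{\mathrm{harm}}(U)}$ for any choice of representative.

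Next I would invoke the well-definedness of $\dot{\mathbf{O}}_{\riem_1,\riem_2}$, already established in the text preceding (\ref{eq:O_dot_definition}) under the hypothesis that $\riem_1$ is connected: a locally constant function on the connected surface $\riem_1$ is a genuine constant $c$, and $\mathbf{O}_{\riem_1,\riem_2} c$ is constant (hence locally constant) on $\riem_2$. By linearity of $\mathbf{O}_{\riem_1,\riem_2}$ it follows that $\dot{\mathbf{O}}_{\riem_1,\riem_2}[h] = [\mathbf{O}_{\riem_1,\riem_2} h]$ is independent of the representative $h$.

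Finally I would combine these observations. For any $h \in \mathcal{D}_{\mathrm{harm}}(\riem_1)$ representing a class $[h]$,
\[ \| \dot{\mathbf{O}}_{\riem_1,\riem_2} [h] \|_{\dot{\mathcal{D}}_{\mathrm{harm}}(\riem_2)} = \| \mathbf{O}_{\riem_1,\riem_2} h \|_{\mathcal{D}_{\mathrm{harm}}(\riem_2)} \leq C \| h \|_{\mathcal{D}_{\mathrm{harm}}(\riem_1)} = C \| [h] \|_{\dot{\mathcal{D}}_{\mathrm{harm}}(\riem_1)}, \]
where the middle inequality is exactly Theorem \ref{thm:bounded_overfare_Dirichlet} and the outer equalities use the identification of the quotient norm with the seminorm from the first step. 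There is no genuine obstacle here, since the analytic content was already carried out in establishing the seminorm estimate; the only point requiring care is the bookkeeping that the homogeneous norm agrees with the seminorm on representatives, which is immediate from $d(f-c)=df$.
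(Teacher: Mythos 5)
Your proposal is correct and is exactly the argument the paper intends: the corollary is stated without proof as an immediate consequence of Theorem \ref{thm:bounded_overfare_Dirichlet}, since the estimate there is already in terms of the Dirichlet seminorm, which descends to the norm on the homogeneous quotient. Your careful bookkeeping (well-definedness on the quotient when $\riem_1$ is connected, and the identification of the quotient norm with the seminorm of any representative) fills in precisely the details the authors left implicit.
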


 For more regular quasicircles, constants in the overfare process can be controlled.   
 A BZM (bounded zero mode) quasicircle $\gamma$ in $\mathscr{R}$ are quasicircles with the following property. Given an open neighbourhood $U$ of $\gamma$ and a conformal map $\phi:U \rightarrow V$ where $V$ is a doubly-connected domain in $\sphere$, let $\gamma'=\phi(\gamma)$ and denote the connected components of the complement by $\Omega_1$ and $\Omega_2$.  A BZM quasicircle is one such that overfare $\mathbf{O}_{\Omega_1,\Omega_2}$ is bounded from $H^1_{\mathrm{conf}}(\Omega_1)$ to $H^1_{\mathrm{conf}}(\Omega_2)$.  For such quasicircles one can remove the assumption of connectedness. 

 \begin{theorem}[Bounded overfare theorem for BZM quasicircles]\label{thm:bounded_overfare_conf}  Let $\mathscr{R}$ be a compact Riemann surface and let $\Gamma = \Gamma_1 \cup \cdots \cup \Gamma_m$ be a collection of \emph{BZM} quasicircles separating $\mathscr{R}$ into $\riem_1$ and $\riem_2$. 
  There is a constant $C$ such that 
     \[  \| \mathbf{O}_{1,2} h \|_{H^1_{\mathrm{conf}}(\riem_2)}  \leq C \| h \|_{H^1_{\mathrm{conf}}(\riem_1)}  \]
     for all $h \in \mathcal{D}_{\mathrm{harm}}(\riem_1)$.  
  \end{theorem}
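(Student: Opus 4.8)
The plan is to estimate the two constituents of the $H^1_{\mathrm{conf}}(\riem_2)$ norm of $h_2 := \mathbf{O}_{1,2} h_1$ separately. The boundary zero modes $\mathscr{H}_k = \int_{\partial_k \riem_2} h_2 \ast d\omega_k$ are the inexpensive part: each is a boundary integral against the fixed real-analytic form $\ast d\omega_k$, and since overfare preserves CNT boundary values, $h_2$ and $h_1$ agree on $\Gamma$, so $\mathscr{H}_k$ is a bounded linear functional of the boundary values of $h_1$; by the trace theory of the conformal Sobolev space (Section \ref{se:prelim_BVs_harm}) these have $H^{1/2}(\Gamma)$ norm $\lesssim \|h_1\|_{H^1_{\mathrm{conf}}(\riem_1)}$, giving $|\mathscr{H}_k| \lesssim \|h_1\|_{H^1_{\mathrm{conf}}(\riem_1)}$. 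The real content is the Dirichlet seminorm $\|h_2\|_{\mathcal{D}_{\mathrm{harm}}(\riem_2)}$, and this is where the BZM hypothesis is essential: when $\riem_1$ is disconnected Theorem \ref{thm:bounded_overfare_Dirichlet} is unavailable, and the relative constants between components of $\riem_1$---invisible to $\|h_1\|_{\mathcal{D}_{\mathrm{harm}}}$ but carried by the zero modes inside $\|h_1\|_{H^1_{\mathrm{conf}}}$---can force $h_2$ to have positive energy. I would invoke the Dirichlet principle: $h_2$ minimizes energy among all functions on $\riem_2$ with the prescribed boundary values, so it suffices to exhibit one competitor $\tilde h$ with the correct boundary values on $\Gamma$ and $\|\tilde h\|_{\mathcal{D}_{\mathrm{harm}}(\riem_2)} \lesssim \|h_1\|_{H^1_{\mathrm{conf}}(\riem_1)}$.

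I build $\tilde h$ from local overfares supplied by the BZM hypothesis. For each $\Gamma_k$ fix a collar chart $\phi_k : U_k \to \mathbb{A}_{r_k,R_k}$ carrying $\Gamma_k$ to a planar quasicircle $\gamma_k'$ with complementary Jordan domains $\Omega^{(1)}_k, \Omega^{(2)}_k$ on the two sides. Extend the boundary values of $h_1$ on $\Gamma_k$ harmonically into $\Omega^{(1)}_k$, apply the planar overfare $\mathbf{O}_{\Omega^{(1)}_k,\Omega^{(2)}_k}$, and transport back by $\phi_k$ to obtain a harmonic $p_k$ on the $\riem_2$-side collar $U_k^{(2)}$ whose boundary values on $\Gamma_k$ agree with those of $h_1$. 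By the BZM hypothesis, the conformal invariance of $H^1_{\mathrm{conf}}$ and of overfare \eqref{eq:overfare_conformally_invariant}, and the trace bound above, $\|p_k\|_{H^1_{\mathrm{conf}}(U_k^{(2)})} \lesssim \|h_1\|_{H^1_{\mathrm{conf}}(\riem_1)}$. Letting $a_k$ be the zero mode of $p_k$ and $\chi_k$ a smooth cutoff supported in $U_k^{(2)}$ with $\chi_k \equiv 1$ near $\Gamma_k$, I set $\tilde h := a_k + \chi_k(p_k - a_k)$ on each $U_k^{(2)}$ and interpolate smoothly between the constants $a_k$ over the core of $\riem_2$; since $\chi_k \equiv 1$ near $\Gamma_k$, $\tilde h = p_k$ there and carries the correct boundary values.

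To bound $\|\tilde h\|_{\mathcal{D}_{\mathrm{harm}}(\riem_2)}$, note that on a collar $d\tilde h = \chi_k\, dp_k + (p_k - a_k)\, d\chi_k$, so its energy is controlled by $\|p_k\|^2_{\mathcal{D}_{\mathrm{harm}}(U_k^{(2)})}$ plus a cutoff term dominated by $\int_{U_k^{(2)}} |p_k - a_k|^2$ in the fixed chart metric. The first term is bounded by the displayed BZM estimate. For the second, subtracting the zero mode $a_k$ leaves a function with vanishing zero mode, so a Poincar\'e inequality on the annular collar bounds its $L^2$ norm by the Dirichlet seminorm of $p_k$, again controlled. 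The energy of the interior interpolation is a fixed quadratic form in the differences $a_j - a_k$, hence $\lesssim \sum_k |a_k|^2$, and each $a_k$, being a zero mode, is $\lesssim \|h_1\|_{H^1_{\mathrm{conf}}(\riem_1)}$. Summing over the finitely many $\Gamma_k$ and applying the Dirichlet principle bounds $\|h_2\|_{\mathcal{D}_{\mathrm{harm}}(\riem_2)}$, which with the zero-mode estimate proves the theorem.

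The principal obstacle is the collar energy estimate, and within it the cutoff term $\int |p_k - a_k|^2$: this is exactly the step that breaks for a general quasicircle with $\riem_1$ disconnected, since without the BZM bound the zero mode $a_k$ of the local overfare could blow up, leaving $p_k - a_k$ with uncontrolled energy and preventing the domination of $\|h_2\|_{\mathcal{D}_{\mathrm{harm}}}$ by $\|h_1\|_{H^1_{\mathrm{conf}}}$. A secondary, more routine point is verifying that the planar overfare $p_k$ faithfully reproduces the $\riem_2$-side boundary behavior after transport by $\phi_k$, which rests on the pointwise identification of $\partial\riem_1$ and $\partial\riem_2$ with $\Gamma$ and the naturality of overfare under the collar chart.
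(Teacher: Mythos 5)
This theorem is stated in the preliminaries without proof: the paper explicitly defers to \cite{Schippers_Staubach_scattering_I} (``Details and proofs can be found in...''), so your proposal can only be compared against that source rather than against an argument in the present text. Your strategy is correct in outline and is a genuine variant of the one used there. The cited paper realizes $\mathbf{O}_{1,2}h_1$ directly as a composition of bounded maps --- restriction to the collars, the planar BZM overfare through each $\phi_k(\Gamma_k)$, and a ``bounce'' operator that harmonically re-extends collar data to all of $\riem_2$ --- with uniqueness (Theorem \ref{thm:bounded_overfare_existence}) identifying the composite with the overfare. You instead take existence of $h_2$ for granted and bound its energy via the Dirichlet principle by exhibiting the cutoff competitor $\tilde h$; this in effect re-proves the boundedness of the bounce operator inline. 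Both routes isolate the BZM hypothesis at the same place (control of the local zero modes $a_k$, without which the disconnected-$\riem_1$ case fails), and your closing paragraph correctly identifies that as the crux. What your route buys is that it never has to verify that a constructed function \emph{is} the overfare beyond matching boundary values; what it costs is reliance on the Dirichlet principle on a rough domain.

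Three ingredients you use silently are themselves nontrivial results of the framework and would need to be cited or proved in a complete writeup. (i) The Dirichlet principle on a surface bounded by quasicircles: to conclude $\|dh_2\|\le\|d\tilde h\|$ you need the orthogonal decomposition $\mathcal{D}(\riem_2)=\mathcal{D}_{\mathrm{harm}}(\riem_2)\oplus\mathcal{D}_0(\riem_2)$ together with the fact that the harmonic projection of $\tilde h$ retains its CNT boundary values, whence it equals $h_2$ by uniqueness; this boundary regularity of quasicircles is exactly what \cite{Schippers_Staubach_scattering_I} establishes and is not free. (ii) Before the BZM overfare can even be applied you must harmonically extend the collar restriction of $h_1$ into the Jordan domain $\Omega_k^{(1)}$ with an $H^1_{\mathrm{conf}}$ bound; that is the boundedness of the bounce operator on the $\riem_1$ side. (iii) For disconnected $\riem_1$ the norm $\|\cdot\|_{H^1_{\mathrm{conf}}(\riem_1)}$ must carry one zero mode per connected component, and your trace bound needs a Poincar\'e-type argument to transfer that zero mode from the distinguished boundary curve of a component to the particular $\Gamma_k$ feeding the collar construction. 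None of these is a false step, but each is a load-bearing import rather than a routine remark.
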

\end{subsection}
\begin{subsection}{Definitions of Schiffer operators}\label{subsec:defSchiffer} 
Denote by $\mathscr{G}$ Green's function of $\mathscr{R}$, and let $G_{\Sigma_k}$ be Green's functions of $\riem_k$, $k=1,2$.   Here, if $\riem_k$ has more than one connected component, then $
G_{\Sigma_k}$ stands for the function whose restriction to each connected component is the Green's function of that component.

  First, we define the Schiffer operators.  
 To that end, we need to define certain bi-differentials, which will be the integral kernels of the Schiffer operators. 
 
 \begin{definition}
  For a compact Riemann surface $\mathscr{R}$ with Green's function $\mathscr{G}(w,w_0;z,q),$ the \emph{Schiffer kernel} is defined by
 \[  \gls{schifferk}_{\mathscr{R}}(z,w) =  \frac{1}{\pi i} \partial_z \partial_w \mathscr{G}(w,w_0;z,q),    \]
 and the \emph{Bergman kernel} is given by
 \[  \gls{bergmank}_{\mathscr{R}} (z,w) = - \frac{1}{\pi i} \partial_z \overline{\partial}_{{w}} \mathscr{G}(w,w_0;z,q).    \]

 For a non-compact surface $\riem$ of type $(g,n)$ with Green's function $g_\riem$, we define
 \[  L_\riem(z,w) =   \frac{1}{\pi i} \partial_z \partial_w G_\riem(w,z),  \]
 and 
 \[  K_\riem (z,w) = - \frac{1}{\pi i} \partial_z \overline{\partial}_{{w}}G_\riem(w,z).  \]
 \end{definition}

  The kernel functions satisfy the following:
 \begin{enumerate} 
  \item[$(1)$] $L_{\mathscr{R}}$ and $K_{\mathscr{R}}$ are independent of $q$ and $w_0$.  
  \item[$(2)$] $K_{\mathscr{R}}$ is holomorphic in $z$ for fixed $w$, and anti-holomorphic in $w$ for fixed $z$.
  \item[$(3)$] $L_{\mathscr{R}}$ is holomorphic in $w$ and $z$, except for a pole of order two when $w=z$.
\item[$(4)$] $L_{\mathscr{R}}(z,w)=L_{\mathscr{R}}(w,z)$.  
  \item[$(5)$] $K_{\mathscr{R}}(w,z)= - \overline{K_{\mathscr{R}}(z,w)}$.  
 \end{enumerate}

 Also, the well-known reproducing property of the Bergman kernel holds, i.e.
 \begin{equation}  \label{eq:Bergman_reproducing}
  \iint_\riem K_\mathscr{R}(z,w) \wedge h(w) = h(z),
 \end{equation}
 for $h \in A(\mathscr{R})$ \cite{Royden}.

 \begin{definition}\label{def: restriction ops}
For $k=1,2$ define the \emph{restriction operators}
 \begin{align*}
  \mathbf{R}_{\riem_k}:\mathcal{A}(\mathscr{R}) & \rightarrow \mathcal{A}(\riem_k) \\
  \alpha & \mapsto \left. \alpha \right|_{\riem_k}
 \end{align*}
 and 
 \begin{align*}
  \mathbf{R}^0_{\riem_k}: \mathcal{A}(\riem_1 \cup \riem_2) & \rightarrow \mathcal{A}(\riem_k) \\
    \alpha & \mapsto \left. \alpha \right|_{\riem_k}.
 \end{align*}
 \end{definition}
 
 It is obvious that these are bounded operators. In a similar way, we also define the restriction operator
 \[  \gls{harmrest}_{\riem_k} : \mathcal{A}_{\mathrm{harm}}(\mathscr{R}) \rightarrow \mathcal{A}_{\mathrm{harm}}(\riem_k).   \] 

We can now define the Schiffer operators as follows.
\begin{definition}
For $k=1,2$, we define the {\it Schiffer comparison operators} by
 \begin{align*}
  \gls{T}_{\riem_k}: \overline{\mathcal{A}(\riem_k)} & \rightarrow \mathcal{A}(\riem_1 \cup \riem_2)  \\
  \overline{\alpha} & \mapsto \iint_{\riem_k} L_{\mathscr{R}}(\cdot,w) \wedge \overline{\alpha(w)}.
 \end{align*}
\\
and 
\begin{align*}
 \gls{S}_{\riem_k}: \mathcal{A}(\riem_k) & \rightarrow \mathcal{A}(\mathscr{R}) \\
  \alpha & \mapsto \iint_{\riem_k}  K_{\mathscr{R}}(\cdot,w) \wedge \alpha(w).
\end{align*}
The integral defining $\mathbf{T}_{\riem_k}$ is interpreted as a principal value integral whenever $z \in \riem_k$.  
Also, we define for $j,k \in \{1,2\}$
 \begin{equation}\label{defn: T sigmajsigmak}
     \gls{Tmix} = \mathbf{R}^0_{\riem_k} \mathbf{T}_{\riem_j}: \overline{\mathcal{A}(\riem_j)} \rightarrow \mathcal{A}(\riem_k). 
 \end{equation}   
\end{definition} 
 \begin{theorem} \label{th:Schiffer_operators_bounded}  $\mathbf{T}_{\riem_k}$, $\mathbf{T}_{\riem_j,\riem_k}$, and $\mathbf{S}_{\riem_k}$ are bounded for all $j,k =1,2$.  
 \end{theorem}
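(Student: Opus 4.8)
The plan is to treat the three operators in increasing order of difficulty and reduce each to a standard boundedness statement. Since $\mathbf{T}_{\riem_j,\riem_k}=\mathbf{R}^0_{\riem_k}\mathbf{T}_{\riem_j}$ and the restriction operator $\mathbf{R}^0_{\riem_k}$ is norm non-increasing, it suffices to bound $\mathbf{S}_{\riem_k}$ and $\mathbf{T}_{\riem_k}$; the boundedness of $\mathbf{T}_{\riem_j,\riem_k}$ then follows at once by composition. Note that boundedness is purely an $L^2$ estimate, so throughout I will freely extend inputs by zero from $\riem_k$ to $\mathscr{R}$ (an isometry onto its image) and restrict outputs, both of which cost nothing in norm; in particular no regularity of the separating quasicircle $\Gamma$ is needed.

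For $\mathbf{S}_{\riem_k}$ the decisive point is that $\mathscr{R}$ is compact, so $\mathcal{A}(\mathscr{R})$ is finite-dimensional of dimension $g$. Hence its reproducing kernel $K_{\mathscr{R}}$ — which by the reproducing property \eqref{eq:Bergman_reproducing} together with properties $(2)$ and $(5)$ is exactly the kernel of $\mathbf{S}_{\riem_k}$ — is a smooth, bounded bidifferential on $\mathscr{R}\times\mathscr{R}$ with no diagonal singularity; one may write $K_{\mathscr{R}}(z,w)=\sum_{\ell}\eta_\ell(z)\,\overline{\eta_\ell(w)}$ for an orthonormal basis $\{\eta_\ell\}$ of $\mathcal{A}(\mathscr{R})$. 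Integrating such a bounded kernel over the finite-measure region $\riem_k$ yields a Hilbert--Schmidt, hence bounded, operator; equivalently $\mathbf{S}_{\riem_k}$ is of finite rank $\le g$. Either way the estimate is immediate, and $\mathbf{S}_{\riem_k}$ is in fact compact.

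The substance of the theorem is the boundedness of $\mathbf{T}_{\riem_k}$, whose kernel $L_{\mathscr{R}}$ carries a pole of order two on the diagonal by property $(3)$. I would localize. In a chart $\phi$ near a diagonal point, the defining property of $\mathscr{G}$ gives that $\mathscr{G}(w;z)+\log|\phi(w)-\phi(z)|$ is harmonic across $z=w$, and differentiating yields
\[
 L_{\mathscr{R}}(z,w)=-\frac{1}{2\pi i}\,\frac{\phi'(z)\,\phi'(w)}{\bigl(\phi(w)-\phi(z)\bigr)^{2}}\,dz\,dw \;+\; L^{\mathrm{sm}}(z,w),
\]
where $L^{\mathrm{sm}}$ is real-analytic across the diagonal, since the harmonic correction contributes only smooth terms after applying $\partial_z\partial_w$. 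Choosing a finite partition of unity subordinate to a cover of $\mathscr{R}$ by such charts together with charts bounded away from the diagonal, the part of the operator coming from $L^{\mathrm{sm}}$ has a bounded kernel over a compact set and is therefore Hilbert--Schmidt. In each chart, after the conformally invariant identification of $L^2$ one-forms with $L^2$ densities, the singular part is the Beurling transform $f\mapsto -\tfrac1\pi\,\mathrm{p.v.}\!\iint (w-z)^{-2}f(w)\,dA(w)$; extending the zero-extension of $\overline\alpha$ to all of $\mathbb{C}$ and invoking the Calderón--Zygmund / Plancherel boundedness of the Beurling transform on $L^2(\mathbb{C})$ (its Fourier multiplier has modulus one) furnishes the required bound, and restriction to $\riem_1\cup\riem_2$ only lowers the norm.

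The main obstacle is exactly this singular part of $\mathbf{T}_{\riem_k}$: one must check that the principal-value integral is well defined, that patching the local Beurling transforms through the partition of unity produces only bounded error terms (the commutators of the cutoffs with the singular kernel have integrable off-diagonal kernels), and that the chart pullbacks respect the form structure so that the conformal $L^2$ norm is preserved. A cleaner alternative, which I would present in parallel, is to recognize $L_{\mathscr{R}}$ as the Schwartz kernel of the complex Beurling--Ahlfors transform on the closed surface $\mathscr{R}$, assembled from $\partial$, $\overline{\partial}^{\,*}$ and the Green's operator of the Laplacian; its $L^2$-boundedness is then a standard consequence of Hodge theory and elliptic regularity on $\mathscr{R}$, and $\mathbf{T}_{\riem_k}$ is simply its compression between the isometric zero-extension $\overline{\mathcal{A}(\riem_k)}\hookrightarrow L^2(\mathscr{R})$ and the bounded restriction to $\riem_1\cup\riem_2$.
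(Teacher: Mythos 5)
Your argument is correct, and it is essentially the argument behind this statement: the paper itself only quotes Theorem \ref{th:Schiffer_operators_bounded} from Part I \cite{Schippers_Staubach_scattering_III} without reproving it, and the proof there (and in the authors' earlier work on Schiffer comparison operators) proceeds exactly as you do --- $\mathbf{S}_{\riem_k}$ is bounded (indeed finite rank) because $K_{\mathscr{R}}$ is the reproducing kernel of the finite-dimensional space $\mathcal{A}(\mathscr{R})$, $\mathbf{T}_{\riem_k}$ is handled by splitting $L_{\mathscr{R}}$ into the local Cauchy--Beurling singularity plus a smooth remainder and invoking $L^2$-boundedness of the Beurling transform, and $\mathbf{T}_{\riem_j,\riem_k}$ follows by composing with the norm non-increasing restriction. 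Your local expansion of $L_{\mathscr{R}}$ and the identification of the singular part with the Beurling transform agree with the formulas the paper records in the sphere case, so no gap remains.
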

  
  We will use the notations 
 \[  \mathbf{S}_k, \ \ \mathbf{T}_{j,k}, \ \ \mathbf{T}_k, \ \ \mathbf{R}_{k}, \ \  \mathbf{P}_k=\mathbf{P}_{\riem_k}, \]
 wherever the choice of surfaces $\riem_1$ and $\riem_2$ is clear from context.\\
 
For any operator $\mathbf{M}$, we define the complex conjugate operator by 
 \[  \overline{\mathbf{M}} \overline{\alpha} = \overline{\mathbf{M} \alpha}  \]
 So for example 
 \[   \overline{\mathbf{T}}_{1,2}:\mathcal{A}(\riem_1) \rightarrow \overline{\mathcal{A}(\riem_2)} \]
 and similarly for $\overline{\mathbf{R}}_{\riem_k}$, etc.  
 
All the operators considered in this section are conformally invariant. Explicitly, if $f:\mathscr{R} \rightarrow \mathscr{R}'$ is a biholomorphism between compact surfaces, and we denote
 $\riem_k'= f(\riem_k)$ for $k=1,2$, then 
 \begin{align}  \label{eq:Schiffer_operators_conformally_invariant}
   f^* \; \mathbf{R}_{\riem_k'} & = \mathbf{R}_{\riem_k} \; f^* \nonumber \\
   f^*  \; \mathbf{R}^0_{\riem_k'} & = \mathbf{R}^0_{\riem_k} \; f^* \nonumber \\
   f^* \; \mathbf{T}_{\riem_k'} & = \mathbf{T}_{\riem_k}\; f^* \\
   f^* \; \mathbf{T}_{\riem_j',\riem_k'} & = \mathbf{T}_{\riem_j,\riem_k} \; f^* \nonumber \\ 
   f^* \; \mathbf{S}_{\riem_k'} & = \mathbf{S}_{\riem_k} \; f^*. \nonumber
 \end{align}

\end{subsection}

\end{section}

\begin{section}{Scattering}   \label{se:scattering}

\begin{subsection}{About this section}

In this section we assume that $\mathscr{R}$ is a compact surface separated into two pieces by a complex $\Gamma = \Gamma_1 \cup \cdots \cup \Gamma_n$ of quasicircles into two pieces  $\riem_1$ and $\riem_2$, in the sense of Definition \ref{de:separating_complex}. Other assumptions on the surface will be explicitly added when necessary. 

 In Sections \ref{se:subsection_overfare_oneforms} and \ref{se:overfare_one_forms_second} we define the overfare process for one-forms and motivate it geometrically.
 The remaining Sections \ref{se:decompositions_compatibility} and \ref{se:scattering_subsection} are devoted to the construction of the scattering matrix associated to the overfare of one-forms defined in Section \ref{se:subsection_overfare_oneforms}, and to the proof of its unitarity.  We give an explicit form in terms of the integral operators $\mathbf{T}_{1,k}$ and $\mathbf{S}_k$ of Schiffer. 

 Since Sections \ref{se:decompositions_compatibility} and \ref{se:scattering_subsection} are somewhat technical, we give an outline of how the proofs proceed.
 Unitarity of the scattering matrix follows fairly directly from the adjoint identities derived in part I \cite{Schippers_Staubach_scattering_III}.  The difficulty is to explicitly find the form of the scattering matrix and prove that it indeed has this form.  
 We do this in three main steps.  The first two steps are completed in Section \ref{se:decompositions_compatibility}, and the final step is completed in Section \ref{se:scattering_subsection}.

 The first step involves decomposing arbitrary harmonic one-forms on $\riem_2$ in terms of (a) restrictions of harmonic forms on $\mathscr{R}$, (b) forms in the image of $\mathbf{T}_{1,2}$ and $\overline{\mathbf{T}}_{1,2}$, and (c) harmonic measures on $\riem_2$.  Similar decompositions are given on $\riem_1$. They are motivated by the results of Part I \cite[Section 4]{Schippers_Staubach_scattering_III}, which showed the interrelation between the cohomology of forms  in the range of the operator $\mathbf{T}_{1,2}$ applied to the restrictions of forms on $\mathscr{R}$, and the cohomology of forms in the range of the adjoints of the restriction operator. The decompositions are by necessity somewhat intricate, but they also have a certain elegance and inevitability.

 Once this decomposition is given, in step two we apply the jump formula and cohomology identities to express a special case of the overfare of one-forms $\mathbf{O}^{\mathrm{e}}_{2,1}$ (where the superscript $e$ stands for ``exact'') in terms of the restriction operators, their adjoints, and the Schiffer operators $\mathbf{T}_{1,1}$.   These formulas show in particular that a natural  general overfare process produces compatible forms. This also proves that this overfare process satisfies one of the nine block-matrix unitarity relations.

 Section \ref{se:scattering_subsection} contains the third step in the proof of the unitarity of the scattering matrix, using the adjoint identities of Part I \cite{Schippers_Staubach_scattering_III} as well as the decompositions for the form and its overfare in \ref{se:decompositions_compatibility}.  Finally, in Section \ref{se:scattering_analogies} we give a heuristic discussion of the interpretation of our matrix as a scattering matrix. 
\end{subsection}
 \begin{subsection}{Boundary values of one-forms from two sides of a quasicircle}  \label{se:subsection_overfare_oneforms}

  In this section and the next we describe the overfare of one-forms. Given a one-form $\alpha_1 \in \mathcal{A}_{\mathrm{harm}}(\riem_1)$, we seek a one-form $\alpha_2 \in \mathcal{A}_{\mathrm{harm}}(\riem_2)$ with the same boundary values as $\alpha_1$. There are two issues to take care of. 

  {\bf (1)}  Given two $L^2$ harmonic one-forms $\alpha_1,\alpha_2$ defined on each side of a quasicircle $\gamma \in \mathscr{R}$, what does it mean to say that $\alpha_1$ and $\alpha_2$ have the same boundary values? 

  {\bf(2)} There are many pairs of one-forms $\alpha_1$ and $\alpha_2$ with the same boundary values on $\Gamma = \Gamma_1 \cup \cdots \cup \Gamma_n$. So we need to provide cohomological data to make this well-defined. 

  Issue (1) is dealt with in this section, and issue (2) in the next section.  Briefly, the problem is as follows. An $L^2$ harmonic one-form defined on a collar on one side (in $\riem_1$ say) {of a boundary curve $\Gamma_k=\partial_k \riem_1 = \partial_k \riem_2$ has boundary values in $H^{-1/2}(\partial_k \riem_1)$. } It must be established that these boundary values are also in $H^{-1/2}(\partial_k \riem_2)$.  
  
 To deal with this, we use the corresponding fact for functions: for any quasicircle $\gamma$ the possible CNT boundary values $\mathcal{H}(\gamma)$ of Dirichlet bounded harmonic functions is the same from either side. Then, we will apply the duality of boundary values of one-forms and functions.  In both cases, we call the process of transfer of boundary values from one side of the curve to the other (without necessarily specifying a particular local extension) ``partial overfare''. 

  We first define this partial overfare on functions, {and then proceed to partial overfare on one-forms. These were first defined in \cite{Schippers_Staubach_scattering_II}, where details and proofs can be found}. {Let $U$ be a doubly-connected neighbourhood of $\partial_k \riem_1$, and let $U_1$ and $U_2$ denote the components of $U \backslash \partial_k \riem_1$ in $\riem_1$ and $\riem_2$ respectively.} For any extension $H_1 \in \mathcal{D}_{\mathrm{harm}}(U_1)$ to an open neighbourhood $U_1$ whose CNT boundary values equal $h$, let $H_2$ {be any element of $\mathcal{D}_{\mathrm{harm}}(U_2)$ whose CNT boundary values agree, and let $h_2$ be those CNT boundary values.  
  } We set
  \begin{align*}
   \mathbf{O}(\partial_k \riem_1,\partial_k \riem_2):H^{1/2}(\partial_k \riem_1) & \rightarrow H^{1/2}(\partial_k \riem_2) \\
   h_1 & \mapsto h_2.
  \end{align*}  
  We define $\mathbf{O}(\partial_k \riem_2,\partial_k \riem_1)$ similarly. {The spaces $H^{1/2}(\partial_k \riem_1)$ is defined as a space on the ideal boundary of $\riem_1$, and similarly $H^{1/2}(\partial_k \riem_2)$ is defined on the ideal boundary of $\riem_2$. Thus these spaces cannot be identified a priori.}
  {\begin{remark} An explicit $H_2$ can be found as follows. Let $\phi:U \rightarrow \mathbb{C}$ be a biholomorphism defined in an open neighbourhood of $\partial_k \riem$ in $\mathscr{R}$. One can show that the boundary values of $H_1$ on $\phi(\partial_k \riem_1$) have an extension to an element of the Dirichlet space of the Jordan domain containing $\phi(U_1)$. This then has a unique overfare, say $\hat{H}_2$, to the complementary Jordan domain in the sphere. Set $H_2 = \hat{H}_2 \circ \phi^{-1}$.
  \end{remark}}
  
  This agrees with overfare $\mathbf{O}_{1,2}$ in the following sense \cite[Proposition 4.1]{Schippers_Staubach_scattering_II}.
  \begin{proposition} \label{pr:local_overfare_functions_well_defined}
   Given $h \in H^{1/2}(\partial_k \riem_1)$, 
   let $H$ be any element of $\mathcal{D}_{\mathrm{harm}}(\riem_1)$ whose \textnormal{CNT} boundary values equal $h$ on $\partial_k \riem_1$. Then the boundary values of $\mathbf{O}_{1,2} H$ equal $\mathbf{O}(\partial_k \riem_1,\partial_k \riem_2)h$.  
  \end{proposition}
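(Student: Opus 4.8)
The plan is to show that both sides of the claimed identity are determined by, and in fact equal to, one and the same CNT boundary value function on the component $\Gamma_k = \partial_k \riem_1 = \partial_k \riem_2$. The two ingredients that make this possible are already in hand: the chart-independence of CNT boundary values recalled in Section \ref{se:prelim_BVs_harm}, which lets us compare boundary values taken from the two sides of the quasicircle $\Gamma_k$ as genuine (a.e.-defined) functions on $\Gamma_k$; and the characterizing property of the global overfare in Theorem \ref{thm:bounded_overfare_existence}, namely that $\mathbf{O}_{1,2}H$ is the unique element of $\mathcal{D}_{\mathrm{harm}}(\riem_2)$ whose CNT boundary values agree, up to a null set, with those of $H$ on all of $\partial \riem_2 = \partial \riem_1$. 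The strategy is then to exploit the well-definedness of the partial overfare $\mathbf{O}(\partial_k \riem_1, \partial_k \riem_2)$, that is, its independence of the chosen local harmonic extensions, by feeding it the restrictions of the global functions $H$ and $\mathbf{O}_{1,2}H$.

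Concretely, I would proceed as follows. First, set $H_2 := \mathbf{O}_{1,2}H$; by Theorem \ref{thm:bounded_overfare_existence} we have $H_2 \in \mathcal{D}_{\mathrm{harm}}(\riem_2)$, and its CNT boundary values coincide up to a null set with those of $H$ on each boundary component, in particular on $\Gamma_k$. Next, fix a doubly-connected neighbourhood $U$ of $\Gamma_k$ with components $U_1 \subset \riem_1$ and $U_2 \subset \riem_2$. Restriction to a collar preserves membership in the harmonic Dirichlet space and does not change the CNT boundary values on $\Gamma_k$, so $H|_{U_1} \in \mathcal{D}_{\mathrm{harm}}(U_1)$ has CNT boundary values $h$ on $\partial_k \riem_1$, while $H_2|_{U_2} \in \mathcal{D}_{\mathrm{harm}}(U_2)$ has CNT boundary values on $\partial_k \riem_2$ equal, as an a.e.-defined function on $\Gamma_k$, to those of $H$, hence to $h$. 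Therefore the pair $\left( H|_{U_1}, H_2|_{U_2} \right)$ is an admissible pair of local extensions in the definition of $\mathbf{O}(\partial_k \riem_1, \partial_k \riem_2)$. By the well-definedness of the partial overfare, its value on $h$ is the CNT boundary value of $H_2|_{U_2}$ on $\partial_k \riem_2$, which is exactly the boundary value of $\mathbf{O}_{1,2}H$ on that curve. This yields the asserted equality.

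The point that requires the most care is the bookkeeping about which space the two boundary-value objects live in and in what sense they are compared. The output $\mathbf{O}(\partial_k \riem_1, \partial_k \riem_2)h$ is by construction an element of $H^{1/2}(\partial_k \riem_2)$, whereas the boundary values of $\mathbf{O}_{1,2}H$ are a priori only an element of $\mathcal{H}(\partial_k \riem_2)$; the identity should be read in $\mathcal{H}(\partial_k \riem_2)$, and equality there follows because both represent the same a.e.-defined function $h$ on $\Gamma_k$. Since the natural map $H^{1/2}(\partial_k \riem_2) \hookrightarrow \mathcal{H}(\partial_k \riem_2)$ is injective, this simultaneously shows that the boundary values of $\mathbf{O}_{1,2}H$ in fact lie in $H^{1/2}(\partial_k \riem_2)$ and agree there with $\mathbf{O}(\partial_k \riem_1, \partial_k \riem_2)h$. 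All of the genuinely analytic content---that CNT boundary values are chart-independent and coincide from the two sides of a quasicircle, that the global overfare exists and is characterized by its boundary values, and that the partial overfare is well-defined---is imported from Section \ref{se:prelim_BVs_harm}, Theorem \ref{thm:bounded_overfare_existence}, and \cite{Schippers_Staubach_scattering_II}; the present statement is then the short consistency check that the global overfare, read off near a single boundary curve, realises the partial overfare.
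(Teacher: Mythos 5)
First, a point of comparison: the paper does not prove this proposition itself --- it is imported verbatim from \cite[Proposition 4.1]{Schippers_Staubach_scattering_II} --- so there is no in-paper argument to measure yours against. Judged on its own terms, the skeleton of your argument is sound and is the natural consistency check: by Theorem \ref{thm:bounded_overfare_existence}, $\mathbf{O}_{1,2}H$ has the same CNT boundary values as $H$ on $\Gamma_k$ up to a null set; restricting $H$ and $\mathbf{O}_{1,2}H$ to the two halves of a doubly-connected neighbourhood of $\Gamma_k$ produces an admissible pair of local extensions for the partial overfare; and the conclusion follows by reading off the boundary values of the second member of that pair.

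The one step you should not take for granted is the ``well-definedness of the partial overfare, that is, its independence of the chosen local harmonic extensions.'' In the paper's logical order this independence is stated as a \emph{consequence} of the proposition (``Thus $\mathbf{O}(\partial_k \riem_1,\partial_k \riem_2)$ is independent of the choice of extension $H_1$ and doubly-connected chart''), so invoking it as an input is, at face value, circular. The circle is easy to break, and you come close to doing so in your final paragraph: by construction $H_2$ is chosen so that its CNT boundary values \emph{agree} with those of $H_1$, so the output $h_2$, viewed in $\mathcal{H}(\partial_k\riem_2)=\mathcal{H}(\partial_k\riem_1)$, is always the class of $h$ itself, independently of all choices; the only nontrivial content of well-definedness is that this class is represented by an element of $H^{1/2}(\partial_k\riem_2)$, i.e.\ that at least one admissible $H_2$ with $H^{1/2}$ boundary values exists (the explicit construction in the Remark following the definition, via a chart to the sphere, supplies one). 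Once that is said, your identification of the boundary values of $\mathbf{O}_{1,2}H$ with $\mathbf{O}(\partial_k\riem_1,\partial_k\riem_2)h$ --- equality read in $\mathcal{H}(\partial_k\riem_2)$ and then lifted to $H^{1/2}(\partial_k\riem_2)$ using that distinct $H^{1/2}$ functions cannot agree off a null set --- is correct and complete.
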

    Thus $\mathbf{O}(\partial_k \riem_1,\partial_k \riem_2)$ is independent of the choice of extension $H_1$ and doubly-connected chart. 
   
   The definition of partial overfare can be stated succinctly as follows. We have that $\mathcal{H}(\partial_k \riem_1) = \mathcal{H}(\partial_k \riem_2)$.  Given $h_1 \in H^{1/2}(\partial_k \riem_1)$, there is a unique $\tilde{h} \in \mathcal{H}(\partial_k \riem_1) = \mathcal{H}(\partial_k \riem_2)$ . Then $\tilde{h}$ agrees with a unique element $h_2 \in H^{1/2}(\partial_k \riem_2)$, and we can set
   \[  h_2 = \mathbf{O}(\partial_k \riem_1,\partial_k \riem_2)h_1.  \] 
   \begin{remark}
   The fact that the identification $\mathcal{H}(\partial_k \riem_1) = \mathcal{H}(\partial_k \riem_2)$ can be made requires careful analysis of the null sets, and requires that the curves are quasicircles \cite{Schippers_Staubach_scattering_I}. 
   \end{remark}

   We define the partial overfare of a one-form by dualizing the partial overfare of functions, using the identification of boundary values of $L^2$ harmonic one forms with $H^{-1/2}$. Recall that $H^{-1/2}(\partial_k \riem_m)$ is defined by treating $\partial_k \riem_m$ as an analytic curve in the double of $\riem_m$, and therefore we must distinguish $H^{-1/2}(\partial_k \riem_1)$ from $H^{-1/2}(\partial_k \riem_2)$.
   
   Let $L \in H^{-1/2}(\partial_k \riem_1)$. We define 
   \[  \mathbf{O}'(\partial_k \riem_1,\partial_k \riem_2):H^{-1/2}(\partial_k  \riem_1) \rightarrow H^{-1/2}(\partial_k \riem_2) \]
   by 
   \[   [\gls{oprime}(\partial_k \riem_1,\partial_k \riem_2) L](h) = -L(\mathbf{O}(\partial_k \riem_2,\partial_k \riem_1)h)  \ \ \ \text{for all} \ h \in H^{1/2}(\partial_k \riem_1).  \]
   $\mathbf{O}'(\partial_k \riem_2,\partial_k \riem_1)$ is defined similarly.
   \begin{remark}
    The negative sign is introduced in order to take into account the change of orientation of the boundary.
   \end{remark}
   
   We also define
   \[ \gls{oprimedot}_{1,2}:\dot{H}^{-1/2}(\partial_k \riem_1) \rightarrow \dot{H}^{-1/2}(\partial_k \riem_2)  \]
   and
   \[  \dot{\mathbf{O}}'_{2,1}:\dot{H}^{-1/2}(\partial_k \riem_2) \rightarrow \dot{H}^{-1/2}(\partial_k \riem_1)  \]
   in the same way. It is easily verified that these are well-defined.  
   
   We collect some {further} necessary facts from \cite{Schippers_Staubach_scattering_II}. 
   \begin{proposition} \label{pr:overfare_prime_constant} For any $L \in H^{-1/2}(\partial_k \riem)$, 
     \[ [\mathbf{O}'(\partial_k \riem_1,\partial_k \riem_2)L] (1) = - L(1). \]
   \end{proposition}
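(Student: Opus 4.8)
The plan is to reduce the identity to the single fact that partial overfare fixes the constant function. Unwinding the definition of $\mathbf{O}'$ at the constant function $1 \in H^{1/2}(\partial_k \riem_2)$ gives
\[ [\mathbf{O}'(\partial_k \riem_1, \partial_k \riem_2) L](1) = -L\big( \mathbf{O}(\partial_k \riem_2, \partial_k \riem_1) 1 \big), \]
so it is enough to show that $\mathbf{O}(\partial_k \riem_2, \partial_k \riem_1) 1 = 1$ as elements of $H^{1/2}(\partial_k \riem_1)$, where on each side $1$ denotes the constant function. Granting this, the stated formula follows at once, the minus sign being exactly the one built into the definition of $\mathbf{O}'$.

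To prove $\mathbf{O}(\partial_k \riem_2, \partial_k \riem_1) 1 = 1$, I would argue that overfare preserves constants. The cleanest route uses Proposition \ref{pr:local_overfare_functions_well_defined}: take the global extension $H \equiv 1 \in \mathcal{D}_{\mathrm{harm}}(\riem_2)$, which is harmonic with vanishing differential (hence trivially $L^2$) and has CNT boundary values equal to $1$ on $\partial_k \riem_2$. By that proposition the boundary values of $\mathbf{O}_{2,1} H$ on $\partial_k \riem_1$ equal $\mathbf{O}(\partial_k \riem_2, \partial_k \riem_1) 1$. But the constant function $1 \in \mathcal{D}_{\mathrm{harm}}(\riem_1)$ has the same CNT boundary values as $H$ on all of $\partial \riem_2 = \partial \riem_1$, so by the uniqueness clause of Theorem \ref{thm:bounded_overfare_existence} we get $\mathbf{O}_{2,1} H = 1$, whose boundary value on $\partial_k \riem_1$ is the constant $1$. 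Alternatively, one can argue directly from the succinct formulation of partial overfare via the identification $\mathcal{H}(\partial_k \riem_1) = \mathcal{H}(\partial_k \riem_2)$: the constant $1$ represents the same boundary-value function on the common curve $\Gamma_k$ from either side, so its image under this identification, and hence its partial overfare, is again $1$.

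I expect that the only real subtlety here is bookkeeping rather than analysis. Since $H^{1/2}(\partial_k \riem_2)$ and $H^{1/2}(\partial_k \riem_1)$ are a priori distinct spaces, defined on the ideal boundaries of $\riem_2$ and $\riem_1$ separately, the whole content of the claim is that partial overfare identifies the constant $1$ on one side with the constant $1$ on the other. No Dirichlet estimate is needed because constants have vanishing seminorm; everything rests on the fact that overfare sends constants to constants, which is immediate from uniqueness in Theorem \ref{thm:bounded_overfare_existence}.
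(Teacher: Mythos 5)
Your proposal is correct. The paper does not actually prove this proposition --- it is quoted as one of the ``necessary facts'' imported from the companion paper \cite{Schippers_Staubach_scattering_II} --- but your argument is exactly the expected one: unwind the definition of $\mathbf{O}'$ at $h=1$ to reduce to $\mathbf{O}(\partial_k \riem_2,\partial_k \riem_1)1=1$, and then observe that the constant extension $H\equiv 1$ has CNT boundary values $1$ from either side of $\Gamma_k$, so uniqueness of overfare (Theorem \ref{thm:bounded_overfare_existence}) forces the partial overfare of the constant to be the constant. You also correctly identify the only genuine point, namely that $H^{1/2}(\partial_k\riem_1)$ and $H^{1/2}(\partial_k\riem_2)$ are a priori distinct spaces and the claim is precisely that the identification via $\mathcal{H}(\Gamma_k)$ matches the two constants.
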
 
   Under certain conditions, partial overfare is bounded.
   \begin{proposition} \label{pr:Hminus_onehalf_local_over_bounded}
   The following statements are valid:\\
   
    \emph{(1)} The partial overfare $\gls{oprimedot}(\partial_k \riem_1,\partial_k \riem_2)$ is bounded  as a map from $\dot{H}^{-1/2}(\partial_k \riem_1)$ to $\dot{H}^{-1/2}(\partial_k \riem_2)$.\\ 
    
    \emph{(2)} If $\partial_k \riem$ is a \emph{BZM} quasicircle, then $\mathbf{O}'(\partial_k \riem_1,\partial_k \riem_2)$ is bounded  as a map from ${H}^{-1/2}(\partial_k \riem_1)$ to ${H}^{-1/2}(\partial_k \riem_2)$.
   \end{proposition}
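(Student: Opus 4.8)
The plan is to deduce boundedness of the one-form partial overfare from boundedness of the \emph{function} partial overfare by a duality argument, since by construction $\mathbf{O}'(\partial_k \riem_1,\partial_k \riem_2)$ is, up to sign, the Banach-space transpose of $\mathbf{O}(\partial_k \riem_2,\partial_k \riem_1)$. Concretely, for $L \in H^{-1/2}(\partial_k \riem_1)$ and $h \in H^{1/2}(\partial_k \riem_2)$ the defining relation gives $\left| [\mathbf{O}'(\partial_k \riem_1,\partial_k \riem_2)L](h) \right| = \left| L(\mathbf{O}(\partial_k \riem_2,\partial_k \riem_1)h) \right| \le \|L\|_{H^{-1/2}(\partial_k \riem_1)}\, \|\mathbf{O}(\partial_k \riem_2,\partial_k \riem_1)h\|_{H^{1/2}(\partial_k \riem_1)}$, so taking the supremum over $\|h\|_{H^{1/2}(\partial_k \riem_2)} \le 1$ yields $\|\mathbf{O}'(\partial_k \riem_1,\partial_k \riem_2)\| \le \|\mathbf{O}(\partial_k \riem_2,\partial_k \riem_1)\|$. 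The same computation on the homogeneous spaces, using that $\dot{H}^{-1/2}$ is the annihilator of the constants inside $H^{-1/2}$, gives the corresponding bound for $\dot{\mathbf{O}}'$. Thus both well-definedness into $H^{-1/2}$ (resp. $\dot{H}^{-1/2}$) and boundedness reduce to a single statement: that the function partial overfare $\mathbf{O}$ is bounded on $H^{1/2}$ (resp. $\dot{H}^{1/2}$).

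To establish the latter I would localize and pass to a conformal model. Fix a collar chart $\phi$ carrying a doubly-connected neighbourhood $U$ of $\partial_k \riem$ onto an annulus, so that $\gamma' = \phi(\partial_k \riem)$ is a quasicircle separating $\sphere$ into two Jordan domains $\Omega_1 \supseteq \phi(U_1)$ and $\Omega_2 \supseteq \phi(U_2)$. By the explicit description of the partial overfare recalled just before Proposition~\ref{pr:local_overfare_functions_well_defined}, on this model $\mathbf{O}(\partial_k \riem_2,\partial_k \riem_1)$ factors as harmonic extension into $\Omega_2$, followed by the full overfare $\mathbf{O}_{\Omega_2,\Omega_1}$ of Definition~\ref{defn:overfare operator}, followed by the boundary trace onto $\gamma'$, all transported by $\phi$ via conformal invariance. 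Since the harmonic-extension and trace maps between boundary values and the interior Dirichlet space are bounded with equivalent norms (the $\dot{H}^{1/2}$ boundary seminorm is comparable to the Dirichlet seminorm, and the full $H^{1/2}$ norm to the $H^1_{\mathrm{conf}}$ norm of Section~\ref{se:prelim_sobolev}), it remains only to invoke boundedness of the full overfare, for which each $\Omega_m$ is conveniently connected.

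For part~(1) I would run this on the homogeneous spaces: the Jordan domains $\Omega_1,\Omega_2$ are connected, so Theorem~\ref{thm:bounded_overfare_Dirichlet} applies in \emph{both} directions and shows $\dot{\mathbf{O}}_{\Omega_2,\Omega_1}$ is bounded on $\dot{\mathcal{D}}_{\mathrm{harm}}$ for arbitrary quasicircles; conformal invariance of the Dirichlet seminorm then transports the bound back to $\dot{H}^{1/2}(\partial_k \riem)$, and the transpose estimate finishes the argument. For part~(2), where the constant mode must be retained, I would instead use Theorem~\ref{thm:bounded_overfare_conf}: when $\partial_k \riem$ is a BZM quasicircle the full overfare is bounded in the $H^1_{\mathrm{conf}}$ norm, which controls both the Dirichlet seminorm and the constant level, giving boundedness of $\mathbf{O}(\partial_k \riem_2,\partial_k \riem_1)$ on the full $H^{1/2}$. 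The constant's behaviour is consistent with Proposition~\ref{pr:overfare_prime_constant}, which records $[\mathbf{O}'L](1) = -L(1)$; dualizing then yields boundedness of $\mathbf{O}'$ on $H^{-1/2}(\partial_k \riem)$.

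The routine parts are the duality inequality and the appeal to the two overfare theorems; the \emph{main obstacle} is the local reduction itself — verifying that the two boundary-value spaces $H^{1/2}(\partial_k \riem_1)$ and $H^{1/2}(\partial_k \riem_2)$, defined through the two distinct doubles, are each comparable to the interior Dirichlet / $H^1_{\mathrm{conf}}$ norm of the conformal model, and bookkeeping the constant mode so that the non-homogeneous norm, and not merely its homogeneous quotient, is transported. Once these norm comparabilities are in place, the transpose estimate delivers both statements with no further analysis.
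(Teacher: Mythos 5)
Your argument is correct and is essentially the canonical one: the paper itself gives no proof here, importing the statement from \cite{Schippers_Staubach_scattering_II}, and since $\mathbf{O}'(\partial_k\riem_1,\partial_k\riem_2)$ is by definition minus the transpose of the function overfare $\mathbf{O}(\partial_k\riem_2,\partial_k\riem_1)$, your duality estimate plus the localization to a conformal model (where Theorem \ref{thm:bounded_overfare_Dirichlet} handles the homogeneous case and the BZM definition, via Theorem \ref{thm:bounded_overfare_conf}, handles the constant mode) is exactly the mechanism the cited framework supplies. The norm comparabilities you flag as the main obstacle are indeed the content of the trace/extension theory recalled in Section \ref{se:prelim_BVs_harm}, so nothing essential is missing.
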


 The association between $H^{-1/2}(\partial_k \riem_m)$ and $\mathcal{H}'(\partial_k \riem_m)$ given by Theorem \ref{th:Honehalf_reformulation} immediately defines a bounded overfare
 \[  \mathbf{O}'(\partial_k \riem_1,\partial_k \riem_2) :\mathcal{H}'(\partial_k \riem_1) \rightarrow \mathcal{H}'(\partial_k \riem_2) \]
 and similarly for the homogeneous spaces
 \[  \dot{\mathbf{O}}'(\partial_k \riem_1,\partial_k \riem_2) :\dot{\mathcal{H}}'(\partial_k \riem_1) \rightarrow \dot{\mathcal{H}}'(\partial_k \riem_2) \]
 
 We will use the same notation for the overfares on $H^{-1/2}(\partial_k \riem_m)$ and $\mathcal{H}'(\partial_k \riem_m)$.
 
 The partial overfare preserves periods:
 \begin{proposition} \label{pr:overfare_preserves_periods}
  For any $k =1,\ldots,n$ and $[\alpha] \in \mathcal{H}'(\partial_k \riem_1)$ we have that 
  \[  \int_{\partial_k \riem_2}  \mathbf{O}'(\partial_k \riem_1,\partial_k \riem_2) [\alpha] = - \int_{\partial_k \riem_1} [\alpha].  \]
  The same claim holds with the roles of $1$ and $2$ switched.
 \end{proposition}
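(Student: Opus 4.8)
The plan is to reduce the statement directly to Proposition \ref{pr:overfare_prime_constant}, using the identification of $\mathcal{H}'(\partial_k \riem_m)$ with $H^{-1/2}(\partial_k \riem_m)$ provided by Theorem \ref{th:Honehalf_reformulation}. The first step I would carry out is the observation that, under the isomorphism $[\alpha] \mapsto L_{[\alpha]}$, evaluating the functional on the constant function $1$ recovers exactly the period of $[\alpha]$. Indeed, the constant function $1$ lies in $H^{1/2}(\partial_k \riem_1)$ and admits the constant extension $H \equiv 1 \in \mathcal{D}_{\mathrm{harm}}(U)$ on any collar neighbourhood $U$, so the definition of $L_{[\alpha]}$ gives
\[ L_{[\alpha]}(1) = \lim_{r \nearrow 1} \int_{\Gamma_r} \alpha \cdot 1 = \lim_{r \nearrow 1} \int_{\Gamma_r} \alpha = \int_{\partial_k \riem_1} [\alpha], \]
where the last equality is precisely the definition of the period $\int_{\partial_k \riem_1}[\alpha]$.

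Next I would invoke the compatibility of the two descriptions of partial overfare. Since the overfare on $\mathcal{H}'(\partial_k \riem_m)$ is defined by transporting the overfare on $H^{-1/2}(\partial_k \riem_m)$ through the isomorphism of Theorem \ref{th:Honehalf_reformulation} (and the same symbol is used for both), we have the intertwining relation $L_{\mathbf{O}'(\partial_k \riem_1,\partial_k \riem_2)[\alpha]} = \mathbf{O}'(\partial_k \riem_1,\partial_k \riem_2) L_{[\alpha]}$. Applying the identity of the first step on the surface $\riem_2$ then yields
\[ \int_{\partial_k \riem_2} \mathbf{O}'(\partial_k \riem_1,\partial_k \riem_2)[\alpha] = L_{\mathbf{O}'(\partial_k \riem_1,\partial_k \riem_2)[\alpha]}(1) = \bigl[ \mathbf{O}'(\partial_k \riem_1,\partial_k \riem_2) L_{[\alpha]} \bigr](1). \]

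Finally I would apply Proposition \ref{pr:overfare_prime_constant} with $L = L_{[\alpha]}$, which gives $[ \mathbf{O}'(\partial_k \riem_1,\partial_k \riem_2) L_{[\alpha]} ](1) = -L_{[\alpha]}(1) = -\int_{\partial_k \riem_1}[\alpha]$, and this is exactly the claim; the statement with the roles of $1$ and $2$ exchanged follows from the identical argument with the two surfaces swapped throughout. The argument is essentially a bookkeeping chain, so there is no single hard obstacle; the two points requiring genuine care are the justification that $L_{[\alpha]}(1)$ equals the period, which hinges on the admissibility of the constant extension $H \equiv 1$ (legitimate because constants lie in the Dirichlet space of any collar and have constant CNT boundary values), and keeping straight which surface's ideal boundary each functional is being evaluated on. The sign, which at first glance might seem to be the crux, needs no separate treatment since it is already encoded in Proposition \ref{pr:overfare_prime_constant}.
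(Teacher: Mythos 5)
Your argument is correct: the paper states this proposition without proof (it is quoted from the companion paper \cite{Schippers_Staubach_scattering_II}), and your derivation — identifying $L_{[\alpha]}(1)$ with the period via the admissible constant extension $H\equiv 1$, using the fact that the overfare on $\mathcal{H}'(\partial_k\riem_m)$ is by definition the conjugate of the overfare on $H^{-1/2}(\partial_k\riem_m)$ under the isomorphism of Theorem \ref{th:Honehalf_reformulation}, and then applying Proposition \ref{pr:overfare_prime_constant} — is exactly the intended reduction. No gaps; the only point needing care, the legitimacy of $H\equiv 1$ and the match between $L_{[\alpha]}(1)$ and $\int_\Gamma[\alpha]:=\lim_{r\nearrow 1}\int_{\Gamma_r}\alpha$, is handled correctly since the paper asserts the defining limit is independent of the choice of extension.
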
 
  
  The partial overfare agrees with analytic continuation, when there is one.
   \begin{proposition}  \label{pr:extendible_forms_overfare_themselves}
    Let $U$ be a doubly-connected neighbourhood of $\partial_k \riem_1=\partial_k \riem_2$. 
    
     \emph{(1)} For any  
    $\alpha \in \mathcal{A}^{\mathrm{e}}_{\mathrm{harm}}(U)$ we have
    \[   \mathbf{O}'(\partial_k \riem_1,\partial_k \riem_2) [\alpha]=[\alpha] \]
    { where the equality above is in $\dot{H}^{-1/2}(\partial_k \riem_2)$.}
    
    \emph{(2)} If $\partial_k \riem_1$ is a \emph{BZM} quasicircle, then for any  
    $\alpha \in \mathcal{A}_{\mathrm{harm}}(U)$ we have
    \[   \mathbf{O}'(\partial_k \riem_1,\partial_k \riem_2) [\alpha]=[\alpha]. \]
   \end{proposition}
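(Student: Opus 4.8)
The plan is to verify directly the duality relation that \emph{defines} $\mathbf{O}'$. Since $\mathbf{O}'(\partial_k\riem_1,\partial_k\riem_2)$ is characterized by $[\mathbf{O}'(\partial_k\riem_1,\partial_k\riem_2)L](h) = -L(\mathbf{O}(\partial_k\riem_2,\partial_k\riem_1)h)$, under the identification of Theorem \ref{th:Honehalf_reformulation} the assertion $\mathbf{O}'[\alpha]=[\alpha]$ amounts to
\[ [\alpha]_2(h) = -[\alpha]_1\big(\mathbf{O}(\partial_k\riem_2,\partial_k\riem_1)h\big) \]
for all test functions $h$, where $[\alpha]_m$ denotes the functional attached to $\alpha$ viewed from the $\riem_m$-side. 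The essential simplification is that $\alpha$, being harmonic on the open set $U$ which contains $\partial_k\riem$ in its interior, is real-analytic and hence smooth \emph{across} the curve; thus all of the boundary roughness sits in the Dirichlet-bounded functions $H_1,H_2$ realizing $h$ and its overfare, and these have, by the very definition of $\mathbf{O}$, a common CNT boundary value $\tilde h$ on $\partial_k\riem$.

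The heart of the argument is to show that each one-sided pairing $\lim_{r}\int_{\Gamma_r^{(m)}}\alpha H_m$ is computed by the honest line integral of the smooth form $\alpha$ against the boundary function $\tilde h$. I would do this in a collar chart sending $\partial_k\riem$ to $\mathbb{S}^1$: there $(\phi^{-1})^*\alpha$ is smooth across the unit circle, hence restricts to a smooth weight $\psi\,d\theta$ on $\mathbb{S}^1$, while the dilates $H_m(r\,\cdot)$ of the harmonic function converge to $\tilde h$ in $H^{1/2}(\mathbb{S}^1)$; pairing the $H^{1/2}$-convergent boundary values against the smooth weight yields $\lim_r\int_{\Gamma_r^{(m)}}\alpha H_m = \int_{\mathbb{S}^1}\tilde h\,\psi\,d\theta$. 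Crucially this limit depends only on $\alpha|_{\partial_k\riem}$ and $\tilde h$, which are the same data whether the curve is approached from $\riem_1$ or from $\riem_2$; the sole difference between the two sides is that $\partial_k\riem$ inherits opposite orientations as $\partial\riem_1$ and as $\partial\riem_2$. Therefore the two one-sided limits are negatives of one another, which is exactly the displayed duality relation and gives part (1) in $\dot{H}^{-1/2}(\partial_k\riem_2)$ (testing against $\dot H^{1/2}$).

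For part (2) the form need no longer be exact, and two further points enter. First, the conclusion is asserted in the full space $H^{-1/2}(\partial_k\riem_2)$, so the periods must also match: since $\alpha$ is closed and smooth across, its periods from the two sides satisfy $\int_{\partial_k\riem_2}[\alpha] = -\int_{\partial_k\riem_1}[\alpha]$, while Proposition \ref{pr:overfare_preserves_periods} gives $\int_{\partial_k\riem_2}\mathbf{O}'[\alpha] = -\int_{\partial_k\riem_1}[\alpha]$; thus $\mathbf{O}'[\alpha]$ and $[\alpha]$ have equal periods, and together with the homogeneous identity this upgrades the equality from $\dot{H}^{-1/2}$ to $H^{-1/2}$ (equivalently one tracks the constant directly via Proposition \ref{pr:overfare_prime_constant}). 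Second, the BZM hypothesis is precisely what makes $\mathbf{O}'(\partial_k\riem_1,\partial_k\riem_2)$ bounded on the full space $H^{-1/2}$ (Proposition \ref{pr:Hminus_onehalf_local_over_bounded}(2)), so that the pairing identity may be tested against all of $H^{1/2}(\partial_k\riem_2)$ and not merely its homogeneous part.

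I expect the main obstacle to be the analytic lemma identifying the boundary pairing of a smooth form with the line integral against $\tilde h$. The difficulty is entirely on the side of the rough data: one must control the convergence of the dilated Dirichlet functions to their CNT boundary values through the collar chart, and cope with the fact that $\partial_k\riem$ is only a quasicircle (possibly non-rectifiable, with boundary values defined only off a null set), so that the line integral must be interpreted in the straightened $\mathbb{S}^1$-picture and shown independent of the chart. Once this convergence is secured, the remainder—the orientation sign and the reduction of part (2) to part (1) via periods—is routine. A conceptually equivalent route for the exact case is to write $\alpha=df$ with $f$ harmonic across $U$, observe that $f$ is its own overfare because it already extends (as in Proposition \ref{pr:local_overfare_functions_well_defined}), and transport this through the relation between $\mathbf{O}'$ and the overfare of functions; the same convergence issue reappears when justifying that $\mathbf{O}'$ intertwines with $d$ on boundary values.
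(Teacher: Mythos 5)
First, a remark on the comparison you were asked to make: this proposition is not proved in the present paper at all --- it is imported verbatim from \cite{Schippers_Staubach_scattering_II} (``We collect some further necessary facts from...''), so there is no in-paper proof to match your argument against. Your proposal therefore has to stand on its own, and as written it has two genuine gaps.

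The first gap is in your central analytic lemma. You propose to work ``in a collar chart sending $\partial_k\riem$ to $\mathbb{S}^1$'' and assert that there $(\phi^{-1})^*\alpha$ is smooth across the unit circle, so that the pairing becomes integration of $H^{1/2}$-convergent boundary values against a smooth weight $\psi\,d\theta$. No such chart exists for a general quasicircle: a collar chart $\phi:U_1\to\mathbb{A}_{r,1}$ straightens $\partial_k\riem_1$ only from the $\riem_1$ side, and it extends holomorphically past $\mathbb{S}^1$ only into the double of $\riem_1$, not into $\riem_2\subset\mathscr{R}$. A chart that both sends $\Gamma_k$ to $\mathbb{S}^1$ and is conformal on a two-sided neighbourhood would force $\Gamma_k$ to be an analytic curve in $\mathscr{R}$. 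Consequently the smoothness of $\alpha$ across $\Gamma_k$ lives in the ambient coordinates where the curve is rough (possibly non-rectifiable, so ``the honest line integral of $\alpha$ against $\tilde h$'' has no direct meaning), while in the straightened picture where the curve is nice the pulled-back form is merely an $L^2$ harmonic form on the annulus, not smooth up to $\mathbb{S}^1$. The two one-sided pairings live in two different straightened pictures glued by a quasisymmetric welding, and showing that they agree under that welding is essentially the entire content of the proposition; your smooth-weight argument does not engage with it. The alternative route you sketch at the end --- write $\alpha=dF$ with $F\in\mathcal{D}_{\mathrm{harm}}(U)$ harmonic across, note that $F|_{U_1}$ and $F|_{U_2}$ have the same CNT boundary values, and transport this through an intertwining $\mathbf{O}'[dH]=[d\,\mathbf{O}H]$ --- is the viable one, because it offloads the hard two-sided analysis onto the already-established function-level results (Proposition \ref{pr:local_overfare_functions_well_defined} and the equality $\mathcal{H}(\partial_k\riem_1)=\mathcal{H}(\partial_k\riem_2)$); but you would still need to prove the intertwining, which you acknowledge you have not done.

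The second gap is the reduction of part (2) to part (1). Matching the value on constants (via Proposition \ref{pr:overfare_prime_constant} and the sign flip of the period) together with a homogeneous identity does \emph{not} determine $\mathbf{O}'[\alpha]$ on all of $H^{1/2}(\partial_k\riem_2)$ when $\alpha$ has nonzero period $\lambda=\int_{\partial_k\riem_1}[\alpha]$. Writing $h=\dot h+c$ and $\mathbf{O}(\partial_k\riem_2,\partial_k\riem_1)\dot h=g+c'(\dot h)$ with $g$ normalized, one finds
\begin{equation*}
\bigl[\mathbf{O}'(\partial_k\riem_1,\partial_k\riem_2)[\alpha]\bigr](\dot h)
= -[\alpha]_1(g) - c'(\dot h)\,\lambda ,
\end{equation*}
so the restriction of $\mathbf{O}'[\alpha]$ to the non-constant directions differs from $\dot{\mathbf{O}}'$ of the homogeneous class by the term $c'(\dot h)\lambda$ involving the zero mode of the overfared test function. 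Equivalently: after splitting $\alpha=dF+\lambda\theta$ with $\theta$ a fixed smooth closed form of unit period, part (1) handles $dF$, but the claim $\mathbf{O}'[\theta]=[\theta]$ in $H^{-1/2}$ is an instance of part (2) itself and amounts to controlling precisely that zero mode. This is where the BZM (``bounded zero mode'') hypothesis does its real work; attributing it solely to making $\mathbf{O}'$ bounded on the full $H^{-1/2}$ misses the point, since the identity could in principle be tested against all of $H^{1/2}$ without boundedness. As it stands, your argument for part (2) does not close.
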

   In other words, one-forms which extend harmonically across a border are their own partial overfare.  

 \end{subsection}
 \begin{subsection}{Overfare of one-forms} \label{se:overfare_one_forms_second}
  In the previous section we saw how to transfer boundary values of a one-form from one side of a curve to the other.  
  We now return to the second issue: that of specifying the cohomological data in the overfare process. 
  We do this via a harmonic form on the compact surface $\mathscr{R}$, which we call a ``catalyzing form''.   

  As with the case of overfare of functions, overfare of forms involves the solution of a Dirichlet problem on the target surface using data on the originating surface. 
  In \cite{Schippers_Staubach_scattering_II} it was shown that the Dirichlet problem for $L^2$ harmonic one-forms is well-posed for Riemann surfaces of finite genus with a finite number of borders homeomorphic to $\mathbb{S}^1$, with boundary data in $H^{-1/2}$ and sufficient cohomological data.  We recall the result here. 
   Fix a connected Riemann surface $\riem$ of genus $g$ with $n$ borders homeomorphic to $\mathbb{S}^1$. Let
 \[   \{ \gamma_1,\ldots,\gamma_{2g}, \partial_1 \riem,\ldots,\partial_{n-1} \riem \}  \]
 be a set of generators for the fundamental group of $\riem$.    
 
 \begin{definition}[$H^{-1/2}$ Dirichlet problem for one-forms]\label{defn:Hminus12_dirichlet_data}
  We say that a harmonic one-form $\alpha$ on $\riem$  solves the $H^{-1/2}$ Dirichlet problem with $H^{-1/2}$ Dirichlet data $(L,\rho,\sigma)$ if
 \begin{enumerate}
     \item for $k=1,\ldots,n$, for any $h_k \in {H}^{1/2}(\partial_k \riem)$ we have
     \[  L_k(h_k) = L_{[\alpha]} h_k;   \]  
     \item for all $k=1,\ldots,2g$
     \[  \int_{\gamma_k} \alpha : = \sigma_k;  \]
     and
     \item for all $k =1,\ldots,n$  
     \[  \int_{\partial_k \riem} \ast \alpha = \rho_k;   \]
 \end{enumerate} 
 \end{definition}
 By \cite[Theorem 3.25]{Schippers_Staubach_scattering_II} this problem is well-posed. This tell us what cohomological data is necessary to specify the overfare.
  
 In this paper, to specify the extra cohomological data
  we will use a one-form on the surface $\mathscr{R}$. 
  \begin{definition} \label{de:weakly_compatible}
   Let $\alpha_k \in \mathcal{A}_{\mathrm{harm}}(\riem_k)$ for $k=1,2$, and let $\zeta \in \mathcal{A}_{\mathrm{harm}}(\mathscr{R})$.  We say that $\alpha_1$ and $\alpha_2$ are weakly compatible with respect to $\zeta$ if
   \begin{enumerate}
       \item $\mathbf{O}'(\partial_k \riem_2,\partial_k \riem_1) [\alpha_2]=[\alpha_1]$ for  $k=1,\ldots,n$, 
       \item $\alpha_k - \mathbf{R}^{\mathrm{h}}_k \zeta \in \mathcal{A}^{\mathrm{e}}_{\mathrm{harm}}(\riem_k)$ for $k=1,2$.\\
   \end{enumerate}
   
  We call $\zeta$ a \emph{weakly catalyzing one-form} for the pair $\alpha_1,\alpha_2$, and $(\alpha_1,\alpha_2,\zeta)$ a weakly compatible triple.
  \end{definition}

  It follows immediately from well-posedness of the $H^{-1/2}$ Dirichlet problem for one-forms that weakly compatible forms exist.  No connectivity assumptions are necessary.
  \begin{theorem} \label{th:weak_compatible_exists}  Let $\mathscr{R}$ be a compact Riemann surface, separated by a complex of quasicircles $\Gamma$ into Riemann surfaces $\riem_1$ and $\riem_2$. 
   Given $\alpha_2 \in \mathcal{A}_{\mathrm{harm}}(\riem_2)$ and $\zeta \in \mathcal{A}_{\mathrm{harm}}(\mathscr{R})$ such that $\alpha_2 - \mathbf{R}_1^{\textnormal h} \zeta \in \mathcal{A}^e_{\mathrm{harm}}(\riem_2)$, there is an $\alpha_1$ such that $\alpha_1$ and $\alpha_2$ are weakly compatible with respect to $\zeta$.  
  \end{theorem}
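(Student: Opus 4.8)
The plan is to produce $\alpha_1$ as the solution of an $H^{-1/2}$ Dirichlet problem on $\riem_1$ in the sense of Definition \ref{defn:Hminus12_dirichlet_data}, with data chosen so that both conditions of Definition \ref{de:weakly_compatible} are forced. First I would specify the boundary data: for each $k=1,\ldots,n$ I set $L_k$ to be the functional on $H^{1/2}(\partial_k\riem_1)$ associated by Theorem \ref{th:Honehalf_reformulation} to the partial overfare $\mathbf{O}'(\partial_k\riem_2,\partial_k\riem_1)[\alpha_2]$; this is a legitimate element of $H^{-1/2}(\partial_k\riem_1)$ since $[\alpha_2]\in\mathcal{H}'(\partial_k\riem_2)$ and partial overfare carries it into $\mathcal{H}'(\partial_k\riem_1)$ (Proposition \ref{pr:Hminus_onehalf_local_over_bounded}). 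For the cohomological data, fixing generators $\gamma_1,\ldots,\gamma_{2g}$ of the handles of $\riem_1$, I set $\sigma_k=\int_{\gamma_k}\mathbf{R}^{\mathrm{h}}_1\zeta$, and I let the conjugate-period data $\rho_1,\ldots,\rho_n$ be arbitrary, since these affect only the unique pinning-down of the solution and play no role in its cohomology class.

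With this data, well-posedness of the $H^{-1/2}$ Dirichlet problem (stated just after Definition \ref{defn:Hminus12_dirichlet_data}) yields a harmonic one-form $\alpha_1\in\mathcal{A}_{\mathrm{harm}}(\riem_1)$. If $\riem_1$ is disconnected I would apply this on each connected component separately, each being a connected bordered surface of type $(g_i,n_i)$, and assemble $\alpha_1$ from the pieces; this is why no connectivity hypothesis is needed. Condition (1) of Definition \ref{de:weakly_compatible} is then immediate: by construction $L_{[\alpha_1]}=L_k$ on each $\partial_k\riem_1$, and since $[\alpha]\mapsto L_{[\alpha]}$ is an isomorphism (Theorem \ref{th:Honehalf_reformulation}) this says exactly $[\alpha_1]=\mathbf{O}'(\partial_k\riem_2,\partial_k\riem_1)[\alpha_2]$. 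Condition (2) for $k=2$ is precisely the hypothesis. It remains only to verify condition (2) for $k=1$, namely that $\alpha_1-\mathbf{R}^{\mathrm{h}}_1\zeta$ is exact on $\riem_1$.

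Since a harmonic one-form is exact exactly when all its periods over a homology basis vanish, I would check the periods of $\alpha_1-\mathbf{R}^{\mathrm{h}}_1\zeta$ over the generators $\gamma_1,\ldots,\gamma_{2g},\partial_1\riem_1,\ldots,\partial_{n-1}\riem_1$. Over the handle cycles these vanish by the choice $\sigma_k=\int_{\gamma_k}\mathbf{R}^{\mathrm{h}}_1\zeta$. The border cycles are the delicate point and constitute the main obstacle, because the border periods of $\alpha_1$ are \emph{not} prescribed freely but are already determined by the boundary data $L_k$; I must check that they are nonetheless consistent with the cohomological requirement, i.e.\ that prescribing boundary values together with handle periods does not over-determine the problem. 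Here Proposition \ref{pr:overfare_preserves_periods} gives $\int_{\partial_k\riem_1}[\alpha_1]=-\int_{\partial_k\riem_2}[\alpha_2]$, the hypothesis that $\alpha_2-\mathbf{R}^{\mathrm{h}}_2\zeta$ is exact on $\riem_2$ gives $\int_{\partial_k\riem_2}\alpha_2=\int_{\partial_k\riem_2}\zeta$, and finally $\partial_k\riem_1$ and $\partial_k\riem_2$ are the same curve $\Gamma_k$ carrying opposite orientations, so $\int_{\partial_k\riem_1}\zeta=-\int_{\partial_k\riem_2}\zeta$. Combining these three identities yields $\int_{\partial_k\riem_1}\alpha_1=\int_{\partial_k\riem_1}\mathbf{R}^{\mathrm{h}}_1\zeta$. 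Hence every period of $\alpha_1-\mathbf{R}^{\mathrm{h}}_1\zeta$ vanishes, the form is exact, and $(\alpha_1,\alpha_2,\zeta)$ is a weakly compatible triple.
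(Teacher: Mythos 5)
Your proposal is correct and follows essentially the same route as the paper: solve the $H^{-1/2}$ Dirichlet problem on each connected component of $\riem_1$ with boundary data given by the partial overfare of $\alpha_2$, handle periods taken from $\zeta$, and the conjugate-period data left arbitrary. The only difference is that you explicitly verify the boundary-period consistency (via Proposition \ref{pr:overfare_preserves_periods}, exactness of $\alpha_2-\mathbf{R}_2^{\mathrm{h}}\zeta$, and the orientation reversal), a step the paper compresses into ``by construction.''
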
 
  \begin{proof}  The quasicircles in the separating complex can be identified with the boundary curves of $\riem_1$ and $\riem_2$. We label the boundary curves so that $\Gamma_k=\partial_k \riem_1 = \partial_k \riem_2$.
  
   Fix a connected component of $\riem_1$, say $\riem_1'$,  whose boundary curves are $\partial_{m_1} \riem'_1 = \partial_{m_{n'}} \riem'_2$ say.  Let $g'$ be the genus of $\riem_1'$, and let $\gamma_{j_1},\ldots,\gamma_{j_{2g'}}$ be a collection of closed curves in $\riem_1'$ so that 
   \[  \{ \gamma_{j_1},\ldots,\gamma_{j_{2g'}},\partial_{m_1} \riem_{1}',\ldots \partial_{m_{n'}} \riem_1' \}  \]
   is a basis for the homology of $\riem_1'$. 
   Solve the Dirichlet problem on $\riem_1'$ to obtain $\alpha_1' \in \mathcal{A}_{\mathrm{harm}}(\riem_1')$, with data as follows: 
   \begin{enumerate}
   \item  for $k=1,\ldots,n'$ 
   \[  L_{m_k} = \mathbf{O}'(\partial_{m_k} \riem_2,\partial_{m_k} \riem_1) \alpha_2;    \]
   \item for $k=1,\ldots,2g'$ 
   \[  \int_{\gamma_{j_k}} \alpha'_1 : = \int_{\gamma_{j_k}} \zeta;  \]
   and
   \item the third condition in Definition \ref{defn:Hminus12_dirichlet_data} can be specified arbitrarily. 
   \end{enumerate}
   By well-posedness, a solution exists for every connected component, so we obtain $\alpha_1 \in \mathcal{A}_{\mathrm{harm}}(\riem_1)$. By construction $(\alpha_1,\alpha_2,\zeta)$ are weakly compatible.
  \end{proof}

  Assuming that the surface $\riem_2$ is connected, we can use exact overfare to find such an $\alpha_1$ given $\alpha_2$ and $\zeta$.  
  Of course $\alpha_1$ is not unique, and weak compatibility only controls conditions (1) and (2) of Definition \ref{defn:Hminus12_dirichlet_data}. Ahead we will characterize the lack of uniqueness.
  \begin{theorem} 
   Let $\mathscr{R}$ be a compact Riemann surface, separated by a complex of quasicircles $\Gamma$ into Riemann surfaces $\riem_1$ and $\riem_2$. 
   Assume that $\riem_2$ is connected, and that $\zeta \in \mathcal{A}_{\mathrm{harm}}(\mathscr{R})$ satisfies
   \[ \alpha_2 - \mathbf{R}_2^{\textnormal h} \zeta \in \mathcal{A}^{\mathrm{e}}_{\mathrm{harm}}(\riem_2).  \]
   Then if  
   \[   \alpha_1 = \mathbf{O}^{\mathrm{e}}_{21} \left( \alpha_2 - \mathbf{R}_2^{\textnormal h}\zeta\right) + \mathbf{R}_1^{\textnormal h} \zeta     \]
   then $(\alpha_1,\alpha_2,\zeta)$ is a weakly compatible triple. 
  \end{theorem}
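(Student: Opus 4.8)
The plan is to check the two defining conditions of weak compatibility in Definition \ref{de:weakly_compatible} directly for the triple $(\alpha_1,\alpha_2,\zeta)$, abbreviating $\beta_2 := \alpha_2 - \mathbf{R}_2^{\mathrm{h}}\zeta$. By hypothesis $\beta_2 \in \mathcal{A}^{\mathrm{e}}_{\mathrm{harm}}(\riem_2)$, so the exact overfare $\mathbf{O}^{\mathrm{e}}_{21}\beta_2$ is defined; here the connectedness of $\riem_2$ is essential, since $\mathbf{O}^{\mathrm{e}}_{21} = d\,\mathbf{O}_{21}\,d^{-1}$ requires a single-valued antiderivative on the originating surface. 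With this notation $\alpha_1 = \mathbf{O}^{\mathrm{e}}_{21}\beta_2 + \mathbf{R}_1^{\mathrm{h}}\zeta$.

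Condition (2) is almost immediate. For $k=2$ it is exactly the hypothesis $\alpha_2 - \mathbf{R}_2^{\mathrm{h}}\zeta = \beta_2 \in \mathcal{A}^{\mathrm{e}}_{\mathrm{harm}}(\riem_2)$. For $k=1$ we have $\alpha_1 - \mathbf{R}_1^{\mathrm{h}}\zeta = \mathbf{O}^{\mathrm{e}}_{21}\beta_2$, and by definition the exact overfare operator maps $\mathcal{A}^{\mathrm{e}}_{\mathrm{harm}}(\riem_2)$ into $\mathcal{A}^{\mathrm{e}}_{\mathrm{harm}}(\riem_1)$, so this form is exact.

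The substance lies in condition (1): $\mathbf{O}'(\partial_k\riem_2,\partial_k\riem_1)[\alpha_2] = [\alpha_1]$ for each $k$. Using linearity of partial overfare and $[\alpha_2] = [\beta_2] + [\mathbf{R}_2^{\mathrm{h}}\zeta]$, I would split this into the two claims
\begin{align*}
\mathbf{O}'(\partial_k\riem_2,\partial_k\riem_1)[\beta_2] &= [\mathbf{O}^{\mathrm{e}}_{21}\beta_2], \\
\mathbf{O}'(\partial_k\riem_2,\partial_k\riem_1)[\mathbf{R}_2^{\mathrm{h}}\zeta] &= [\mathbf{R}_1^{\mathrm{h}}\zeta],
\end{align*}
whose sum is exactly $\mathbf{O}'[\alpha_2] = [\alpha_1]$. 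The first claim says that exact overfare of forms agrees on boundary values with partial overfare: writing $\beta_2 = dg_2$ with $g_2$ single-valued Dirichlet-harmonic on the connected surface $\riem_2$, one has $\mathbf{O}^{\mathrm{e}}_{21}\beta_2 = d\,\mathbf{O}_{21}g_2$, and Proposition \ref{pr:local_overfare_functions_well_defined} identifies the CNT boundary values of $\mathbf{O}_{21}g_2$ with the partial overfare of those of $g_2$; dualizing through the boundary-value functional $[\,\cdot\,]$ yields the first claim. The second claim is that $\zeta$, being harmonic on all of $\mathscr{R}$ and hence extending harmonically across each $\Gamma_k$, is its own partial overfare, which is the content of Proposition \ref{pr:extendible_forms_overfare_themselves}.

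The main obstacle is this second claim, because Proposition \ref{pr:extendible_forms_overfare_themselves}(1) gives the equality only in $\dot{H}^{-1/2}(\partial_k\riem_1)$ (that is, modulo periods) without a regularity hypothesis, whereas condition (1) is an equality in $\mathcal{H}'(\partial_k\riem_1) \cong H^{-1/2}(\partial_k\riem_1)$; the full-strength statement \ref{pr:extendible_forms_overfare_themselves}(2) requires $\partial_k\riem_1$ to be a BZM quasicircle, which we do not wish to assume. I would close the gap by checking the period separately: since $\partial_k\riem_1$ and $\partial_k\riem_2$ are the same curve $\Gamma_k$ with opposite orientations, $\int_{\partial_k\riem_1}[\mathbf{R}_1^{\mathrm{h}}\zeta] = \int_{\partial_k\riem_1}\zeta = -\int_{\partial_k\riem_2}\zeta$, which is precisely the value of $\int_{\partial_k\riem_1}\mathbf{O}'(\partial_k\riem_2,\partial_k\riem_1)[\mathbf{R}_2^{\mathrm{h}}\zeta]$ furnished by the period-preservation Proposition \ref{pr:overfare_preserves_periods}. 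Matching the reduced class (Proposition \ref{pr:extendible_forms_overfare_themselves}(1)) together with the matching period upgrades the equality to all of $H^{-1/2}(\partial_k\riem_1)$, completing condition (1). The same period bookkeeping also confirms consistency with the exactness of $\beta_2$ and of $\mathbf{O}^{\mathrm{e}}_{21}\beta_2$, whose periods around the $\Gamma_k$ vanish.
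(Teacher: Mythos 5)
Your proof is correct and follows essentially the same route as the paper, whose entire argument is the two-sentence observation that condition (2) holds by definition of $\mathbf{O}^{\mathrm{e}}_{21}$ and condition (1) follows from Proposition \ref{pr:extendible_forms_overfare_themselves}. Your additional step of upgrading the $\dot{H}^{-1/2}$ equality of Proposition \ref{pr:extendible_forms_overfare_themselves}(1) to a full $H^{-1/2}$ equality by matching periods via Proposition \ref{pr:overfare_preserves_periods} addresses a point the paper's terse proof leaves implicit, and is a worthwhile refinement.
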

  \begin{proof} By definition of $\mathbf{O}^{\mathrm{e}}_{21}$, condition (1) of Definition \ref{de:weakly_compatible} is satisfied. By Proposition \ref{pr:extendible_forms_overfare_themselves} condition (2) is satisfied. 
  \end{proof}
  
  We add a third condition to deal with the ambiguity. 
  \begin{definition} \label{de:strongly_compatible}
   We say that $\alpha_k \in \mathcal{A}_{\mathrm{harm}}(\riem_k)$, $k=1,2$ are compatible with respect to $\zeta \in \mathcal{A}_{\mathrm{harm}}(\mathscr{R})$ if they are weakly compatible with respect to $\zeta$, and additionally
   \begin{enumerate}\setcounter{enumi}{2}
       \item
        \[ \mathbf{S}_1^{\mathrm{h}} \alpha_1 + \mathbf{S}_2^{\mathrm{h}} \alpha_2 = \zeta.  \]
   \end{enumerate}
   In this case we say that $\zeta$ is a catalyzing form.
  \end{definition}
  \begin{remark}
   We will say that $(\alpha_1,\alpha_2,\zeta)$ is a compatible triple if $\alpha_1$ and $\alpha_2$ are compatible with respect to $\zeta$.  Also,  we will say that $\alpha_1$ is compatible with $\alpha_2$ and $\zeta$ if $(\alpha_1,\alpha_2,\zeta)$ is a compatible triple. 
  \end{remark}
  
  The third compatibility condition has a geometric interpretation. We state this now although it will not be proven until Section \ref{se:decompositions_compatibility}.  Assume that $\riem_2$ is connected.  Given $\alpha_2 \in \mathcal{A}_{\mathrm{harm}}(\riem_2)$ and $\zeta \in \mathcal{A}_{\mathrm{harm}}(\mathscr{R})$,   the data (1) and (2) of the boundary value problem of Definition \ref{defn:Hminus12_dirichlet_data} are specified, so as we have seen this is not unique. The one-form $\alpha_1$ solving the boundary value problem is only determined up to addition of a harmonic measure $d\omega \in \mathcal{A}_{\mathrm{harm}}(\riem_1)$. Ahead we will prove that condition (3) uniquely specifies $\alpha_1$, and that this choice is natural. 
  
  More precisely, assume that $\riem_2$ is connected, $\alpha_2 \in \mathcal{A}_{\mathrm{harm}}(\riem_2)$, and  
  $\zeta \in \mathcal{A}_{\mathrm{harm}}(\mathscr{R})$.  Then 
  \[   \alpha_1 = \mathbf{O}^{\mathrm{e}}_{21} \left( \alpha_2 - \mathbf{R}_2^{\textnormal h} \zeta\right) + \mathbf{R}_1^{\textnormal h} \zeta     \] 
  is the unique element of $\mathcal{A}_{\mathrm{harm}}(\riem_1)$ such that $(\alpha_1,\alpha_2,\zeta)$ is a compatible triple; see Corollary \ref{co:exact_overfare_is_strongly_compatible}.

  We conclude with two observations on perturbations of compatible triples $(\alpha_1,\alpha_2,\zeta)$.   Given two forms $\alpha_k \in \mathcal{A}_{\mathrm{harm}}(\riem_k)$ with the same boundary values, there are many catalyzing one-forms $\zeta \in\mathcal{A}_{\mathrm{harm}}(\mathscr{R})$. 

   We call a form $\alpha \in \mathcal{A}_{\mathrm{harm}}(\mathscr{R})$ ``piecewise exact'' if its restrictions to $\riem_1$ and $\riem_2$ are exact.  Denote the class of $L^2$ harmonic piecewise exact one-forms by $\mathcal{A}^{\mathrm{pe}}_{\mathrm{harm}}(\mathscr{R})$.
  \begin{proposition}  \label{pr:perturb_catalyst_by_pe}
   Let $\alpha_k \in \mathcal{A}_{\mathrm{harm}}(\riem_k)$, $k=1,2$ satisfy
   \[  \mathbf{O}(\partial_m {\riem_1},\partial_m \riem_2) [\alpha_1]=[\alpha_2]  \]
   for $k=1,2$.  There exists a {weakly }catalyzing one-form $\zeta \in \mathcal{A}_{\mathrm{harm}}(\mathscr{R})$ such that $(\alpha_1,\alpha_2,\zeta)$ is a weakly compatible triple. Furthermore, given any pair $\zeta,\zeta'$ of one-forms catalyzing the pair $\alpha_1,\alpha_2$, we have that $\zeta - \zeta'$ is piecewise exact.
  \end{proposition}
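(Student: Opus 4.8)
The statement splits into an existence claim and a uniqueness-up-to-piecewise-exactness claim, and I would treat them separately. The hypothesis is precisely condition (1) of Definition \ref{de:weakly_compatible}, so to produce a weakly catalyzing $\zeta$ it suffices to find $\zeta \in \mathcal{A}_{\mathrm{harm}}(\mathscr{R})$ satisfying condition (2), i.e. $\alpha_k - \mathbf{R}^{\mathrm{h}}_k \zeta \in \mathcal{A}^{\mathrm{e}}_{\mathrm{harm}}(\riem_k)$ for $k=1,2$. Since every element of $\mathcal{A}_{\mathrm{harm}}(\mathscr{R})$ is smooth and square-integrable, and $\alpha_k$ is already $L^2$ harmonic, the difference $\alpha_k - \mathbf{R}^{\mathrm{h}}_k \zeta$ is automatically $L^2$ harmonic; thus condition (2) reduces to the purely cohomological requirement that $\zeta|_{\riem_k}$ be cohomologous to $\alpha_k$, that is $[\zeta|_{\riem_k}] = [\alpha_k]$ in $H^1(\riem_k;\mathbb{C})$ for $k=1,2$.

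The plan for existence is to solve this gluing problem with the Mayer--Vietoris sequence and then pass to a harmonic representative. Thickening $\riem_1$ and $\riem_2$ slightly to open sets $U_1,U_2$ with $U_1 \cup U_2 = \mathscr{R}$ and $U_1 \cap U_2$ homotopy equivalent to $\Gamma = \Gamma_1 \cup \cdots \cup \Gamma_n$, the relevant portion reads
\[ H^1(\mathscr{R};\mathbb{C}) \xrightarrow{\ r\ } H^1(\riem_1;\mathbb{C}) \oplus H^1(\riem_2;\mathbb{C}) \xrightarrow{\ \delta\ } H^1(\Gamma;\mathbb{C}), \]
where $r$ is the pair of restrictions and $\delta(\omega_1,\omega_2) = \omega_1|_\Gamma - \omega_2|_\Gamma$; by exactness $\mathrm{Im}(r) = \ker(\delta)$. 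Identifying $H^1(\Gamma;\mathbb{C}) \cong \mathbb{C}^n$ via the periods over the $\Gamma_k$, the pair $([\alpha_1],[\alpha_2])$ lies in $\ker(\delta)$ precisely when $\int_{\Gamma_k}\alpha_1 = \int_{\Gamma_k}\alpha_2$ for each $k$, with $\Gamma_k$ given a single orientation. This is where the hypothesis enters: condition (1) together with Proposition \ref{pr:overfare_preserves_periods}, which records that partial overfare negates the period (reflecting the reversal of boundary orientation), forces $\int_{\partial_k \riem_1}\alpha_1 = -\int_{\partial_k \riem_2}\alpha_2$, i.e. the two periods agree once $\Gamma_k$ carries a fixed orientation. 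Hence $([\alpha_1],[\alpha_2]) \in \mathrm{Im}(r)$, so there is a class $[\zeta_0] \in H^1(\mathscr{R};\mathbb{C})$ with $r[\zeta_0] = ([\alpha_1],[\alpha_2])$; taking $\zeta$ to be its unique harmonic representative, via Hodge theory on the compact surface $\mathscr{R}$, gives $[\zeta|_{\riem_k}] = [\alpha_k]$ and so establishes condition (2). I note that this argument is insensitive to connectedness of the $\riem_k$, matching the stated generality.

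The uniqueness assertion is then essentially formal. If $\zeta$ and $\zeta'$ both weakly catalyze the pair $\alpha_1,\alpha_2$, then condition (2) gives $[\zeta|_{\riem_k}] = [\alpha_k] = [\zeta'|_{\riem_k}]$ for $k=1,2$, so $[(\zeta-\zeta')|_{\riem_k}] = 0$; that is, $(\zeta-\zeta')|_{\riem_k}$ is exact for each $k$, which is exactly the definition of $\zeta - \zeta' \in \mathcal{A}^{\mathrm{pe}}_{\mathrm{harm}}(\mathscr{R})$. I expect the only genuinely delicate point to be the existence half, specifically the verification of the Mayer--Vietoris gluing condition: this requires the orientation bookkeeping for the shared curves $\Gamma_k$ and the correct application of Proposition \ref{pr:overfare_preserves_periods}, with sign conventions consistent with those fixed in the boundary-value identification of $\mathcal{H}'(\partial_k\riem_m)$. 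The homotopy-theoretic and Hodge-theoretic inputs are otherwise standard.
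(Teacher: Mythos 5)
Your proof is correct and follows essentially the same route as the paper's: the decisive input in both is that the overfare hypothesis, via Proposition \ref{pr:overfare_preserves_periods}, forces $\int_{\partial_k\riem_1}[\alpha_1]=-\int_{\partial_k\riem_2}[\alpha_2]$, so that the periods of $\alpha_1$ and $\alpha_2$ can be simultaneously realized by a single harmonic one-form on $\mathscr{R}$, and the uniqueness half is the same two-line cancellation of exact forms. The only difference is packaging: where you invoke Mayer--Vietoris and Hodge theory to produce $\zeta$, the paper directly prescribes the periods of $\zeta$ on a homology basis adapted to the decomposition, which amounts to the same gluing statement.
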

  \begin{proof}
   To prove existence, we need only choose any harmonic one-form on $\mathscr{R}$ whose periods agree with those of $\alpha_k$ on
   \[   \{ \gamma_1^k,\ldots,\gamma_{2g_k}^k, \partial_1 \riem_k,\ldots,\partial_{n-1} \riem_k \}  \]
   for $k=1,2$. This is possible because of the fact that
   \[  \int_{\partial_k \riem_1} [\alpha_1] = -\int_{\partial_k \riem_2} [\alpha_2] \]
   which follows from the condition $\mathbf{O}(\partial_m {\riem_1},\partial_m \riem_2) [\alpha_1]=[\alpha_2]$.
   
   Now let $\zeta,\zeta' \in \mathcal{A}_{\mathrm{harm}}(\mathscr{R})$ be catalyzing for the pair $\alpha_1,\alpha_2$.  Then 
   \[  \mathbf{R}^{\mathrm{h}}_k\zeta - \mathbf{R}^{\mathrm{h}}_k \zeta ' = (\mathbf{R}^{\mathrm{h}}_k\zeta- \alpha_k) - (\mathbf{R}^{\mathrm{h}}_k\zeta'- \alpha_k) \in \mathcal{A}_{\mathrm{harm}}^{\mathrm{e}}(\riem_k) \]
   for $k=1,2$, which completes the proof.
  \end{proof}
  
  Furthermore, 
  \begin{proposition}  \label{pr:perturb_both_by_harmonic_measure}
   {Assume that either $\riem_1$ or $\riem_2$ is connected. } 
   Let $\alpha_k \in \mathcal{A}_{\mathrm{harm}}(\riem_k)$ for $k=1,2$ be compatible with respect to $\zeta \in \mathcal{A}_{\mathrm{harm}}(\mathscr{R})$. Let $\omega_1$ and $\omega_2$ be harmonic functions which extend continuously to the boundary and are constant there.  Assume further that $\mathbf{O}_{1,2} \omega_1=\omega_2$. Then $\alpha_1 + d\omega_1$ and $\alpha_2 +d \omega_2$ are compatible with respect to $\zeta$. 
  \end{proposition}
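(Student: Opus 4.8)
The plan is to reduce everything to a single, easily verified compatible triple. First I would observe that each of the three defining conditions of compatibility (Definitions \ref{de:weakly_compatible} and \ref{de:strongly_compatible}) is linear in $(\alpha_1,\alpha_2,\zeta)$: condition (1) because $\mathbf{O}'$ and the boundary-value class $[\,\cdot\,]$ are linear, condition (2) because $\mathcal{A}^{\mathrm{e}}_{\mathrm{harm}}(\riem_k)$ is a subspace and $\mathbf{R}^{\mathrm{h}}_k$ is linear, and condition (3) because $\mathbf{S}^{\mathrm{h}}_k$ is linear. Hence the compatible triples form a linear subspace of $\mathcal{A}_{\mathrm{harm}}(\riem_1)\times\mathcal{A}_{\mathrm{harm}}(\riem_2)\times\mathcal{A}_{\mathrm{harm}}(\mathscr{R})$. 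Writing $(\alpha_1+d\omega_1,\alpha_2+d\omega_2,\zeta)=(\alpha_1,\alpha_2,\zeta)+(d\omega_1,d\omega_2,0)$ and using that $(\alpha_1,\alpha_2,\zeta)$ is compatible by hypothesis, it therefore suffices to prove that $(d\omega_1,d\omega_2,0)$ is a compatible triple.

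Before doing so I would confirm that the perturbations live in the right space. Since $\omega_k$ is harmonic and constant on each boundary curve of $\riem_k$, uniqueness of the Dirichlet problem on $\riem_k$ forces $\omega_k$ to be a complex linear combination of the harmonic measures of $\riem_k$ (Definition \ref{def:harmonic measure}); consequently $d\omega_k\in\mathcal{A}_{\mathrm{hm}}(\riem_k)\subseteq\mathcal{A}^{\mathrm{e}}_{\mathrm{harm}}(\riem_k)$ by Proposition \ref{pr:harmonic_measures_L2}. In particular $d\omega_k$ is an exact $L^2$ harmonic one-form, so the exact overfare operator may legitimately be applied to it.

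To show $(d\omega_1,d\omega_2,0)$ is compatible I would invoke the uniqueness characterization of Corollary \ref{co:exact_overfare_is_strongly_compatible}. Suppose first that $\riem_2$ is connected. Applying the characterization with originating data $\alpha_2=d\omega_2$ and catalyzing form $0$ (legitimate since $d\omega_2-\mathbf{R}^{\mathrm{h}}_2 0=d\omega_2$ is exact), the unique partner making the triple compatible is $\mathbf{O}^{\mathrm{e}}_{21}(d\omega_2)$. The one computation that remains is $\mathbf{O}^{\mathrm{e}}_{21}(d\omega_2)=d\omega_1$: by definition $\mathbf{O}^{\mathrm{e}}_{21}=d\,\mathbf{O}_{2,1}\,d^{-1}$, and since $\riem_2$ is connected $d^{-1}(d\omega_2)=\omega_2$ up to a global constant which the final $d$ annihilates, so $\mathbf{O}^{\mathrm{e}}_{21}(d\omega_2)=d(\mathbf{O}_{2,1}\omega_2)$; finally $\mathbf{O}_{2,1}\omega_2=\mathbf{O}_{2,1}\mathbf{O}_{1,2}\omega_1=\omega_1$ by the hypothesis $\mathbf{O}_{1,2}\omega_1=\omega_2$ together with $\mathbf{O}_{2,1}\mathbf{O}_{1,2}=\mathrm{Id}$. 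Thus $\mathbf{O}^{\mathrm{e}}_{21}(d\omega_2)=d\omega_1$ and $(d\omega_1,d\omega_2,0)$ is compatible, completing the proof in this case. If instead $\riem_1$ is connected, the symmetric characterization $\alpha_2=\mathbf{O}^{\mathrm{e}}_{12}(\alpha_1-\mathbf{R}^{\mathrm{h}}_1\zeta)+\mathbf{R}^{\mathrm{h}}_2\zeta$ applies, and the analogous step reduces to $\mathbf{O}^{\mathrm{e}}_{12}(d\omega_1)=d\omega_2$, which now follows directly from the hypothesis $\mathbf{O}_{1,2}\omega_1=\omega_2$.

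I expect the only real obstacle to be the bookkeeping around the identity $\mathbf{O}^{\mathrm{e}}_{21}(d\omega_2)=d\omega_1$, and specifically the legitimacy of passing through $d^{-1}$: this is exactly where the connectivity hypothesis enters, since on a connected originating surface the overfare of a constant is the same constant and is killed by $d$, whereas on a disconnected originating surface $\mathbf{O}^{\mathrm{e}}$ (and hence the whole reduction) would be ill-defined. The assumption that at least one of $\riem_1,\riem_2$ is connected is used precisely to ensure that one of the two exact-overfare characterizations is available.
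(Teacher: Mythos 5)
Your proof is correct, but it takes a genuinely different route from the paper's. The paper verifies the three conditions of Definitions \ref{de:weakly_compatible} and \ref{de:strongly_compatible} for the perturbed pair directly, citing one result from each companion paper: condition (1) follows from a proposition in \cite{Schippers_Staubach_scattering_II} on the boundary values of overfared harmonic measures, condition (2) is immediate from exactness of $d\omega_k$, and condition (3) follows from the identity $\mathbf{S}^{\mathrm{h}}_1 d\omega_1 + \mathbf{S}^{\mathrm{h}}_2 d\omega_2 = 0$ proved in \cite{Schippers_Staubach_scattering_III}. You instead exploit the linearity of all three conditions to reduce to the compatibility of the triple $(d\omega_1, d\omega_2, 0)$, and then settle that single case by the exact-overfare characterization of Corollary \ref{co:exact_overfare_is_strongly_compatible} together with the computation $\mathbf{O}^{\mathrm{e}}_{2,1}(d\omega_2)=d\,\mathbf{O}_{2,1}\,d^{-1}(d\omega_2)=d\omega_1$. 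Both arguments are sound; your linearity reduction is a clean observation that the paper leaves implicit, and your handling of the two connectivity cases (including the relabeled version of the corollary when only $\riem_1$ is connected, which holds since the compatibility conditions are symmetric under swapping indices) is correct. The one caveat is ordering: Corollary \ref{co:exact_overfare_is_strongly_compatible} appears only in Section \ref{se:decompositions_compatibility}, after this proposition, so your argument is a forward reference resting on the full decomposition machinery of Theorem \ref{th:compatibility_existence}. It is not circular --- that theorem nowhere uses this proposition --- but it is considerably heavier than the paper's three-line verification, which only needs the cited identities for harmonic measures.
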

  \begin{proof}
   By \cite[Proposition 3.14]{Schippers_Staubach_scattering_II} condition (1) of compatibility is satisfied by $\alpha_1 + d\omega_1$ and $\alpha_2 + d\omega_2$. The fact that (2) continues to be satisfied follows immediately from the fact that $d\omega_1$ and $d\omega_2$ are exact.
   Finally, observe that by \cite[Theorem 3.30]{Schippers_Staubach_scattering_III} 
   \[ \mathbf{S}^{\mathrm{h}}_1 (\alpha_1 + d\omega_1) + \mathbf{S}^{\mathrm{h}}_2 (\alpha_2 + d\omega_2) = \mathbf{S}^{\mathrm{h}}_1 \alpha_1 + \mathbf{S}^{\mathrm{h}}_2 \alpha_2 = \zeta,  \]
   completing the proof.
  \end{proof}
 \end{subsection}
\begin{subsection}{Decompositions of harmonic forms and compatible triples}  \label{se:decompositions_compatibility}   \ \ \ 
 
  In this section we develop a number of relations between the components of compatible triples via Schiffer operators. This necessitates the derivation of a number of decomposition lemmas. Although these are somewhat technical, their purpose is a simple geometric one: to separate out components of the one-forms according to their cohomology.  To a certain extent, the technical appearance of the lemmas is a consequence of their generality --- we only assume that one of the pieces is connected.  In the end, we obtain a simple formula, given in Corollary \ref{co:exact_overfare_is_strongly_compatible}, relating the elements of a compatible triple using the exact overfare $\mathbf{O}^e$, as promised in Section \ref{se:overfare_one_forms_second}.  This will in turn allow us to derive a very simple expression for the scattering matrix in Section \ref{se:scattering} ahead. 
  We conclude with two Corollaries extracting some of these simple overfare formulas for compatible triples,  in special cases.

  We will need the following lemma.  In its statement and proof, we suppress restriction operators to reduce clutter, since they are clear from context.  Because of the asymmetry in the conditions for $\riem_1$ and $\riem_2$, in the statements and proofs there will be repeated division into the two cases.  

  We recall the following special classes of harmonic measures defined in \cite{Schippers_Staubach_scattering_III}. These are important in characterizing the kernels of the Schiffer operators.  
  \begin{definition}  
  We say that $\omega \in \mathcal{D}_{\text{harm}}(\riem_1)$ is \emph{bridgeworthy} if
  \begin{enumerate}
   \item it is constant on each boundary curve;
   \item on any pair of boundary curves $\partial_k \riem_1$ and $\partial_m \riem_1$ that bound the same connected component of $\riem_2$, the boundary values of $\omega$ are equal.
  \end{enumerate}

  We say that $\alpha \in \mathcal{A}_{\mathrm{hm}}(\riem_1)$ is bridgeworthy if $\alpha = d\omega$ for some bridgeworthy harmonic function $\omega$.  Denote the collection of bridgeworthy harmonic functions by
 $\gls{Dbw}(\riem_1)$,  and the collection of bridgeworthy  harmonic one-forms by 
 $\gls{Abw}(\riem_1)$.  The same definitions apply to $\riem_2$.
 \end{definition}
 The name is meant to invoke the following geometric picture: $\omega$ is bridgeworthy if it has the same constant value on any pair of boundary curves which are connected by a ``bridge'' in $\riem_2$.

  \begin{lemma} \label{le:scattering_preparation}   Assume that $\riem_2$ is connected.\\

 \emph{\bf{(1) (Case of $\riem_1$).}}
   Let $\xi \in \mathcal{A}(\mathscr{R})$, $\overline{\eta} \in \mathcal{A}(\mathscr{R})$, $\alpha_1 \in \mathcal{A}(\riem_1)$, and  $\overline{\beta_1} \in \mathcal{A}(\riem_1)$.  Assume that 
  $(\alpha_1 + \overline{\beta}_1 ) - \mathbf{R}^{\mathrm{h}}_1 (\xi + \overline{\eta}) \in \mathcal{A}^\mathrm{e}_{\mathrm{harm}}(\riem_1)$.
  There are $\overline{m},\overline{s} \in \overline{\mathcal{A}(\mathscr{R})}$,
  $n,t \in \mathcal{A}(\mathscr{R})$, such that 
  \begin{align*}
      \alpha_1 - \overline{m} - t & \in \mathcal{A}^\mathrm{e}_{\mathrm{harm}}(\riem_1) \\
      \overline{\beta}_1  - n - \overline{s} & \in \mathcal{A}^\mathrm{e}_{\mathrm{harm}}(\riem_1) 
  \end{align*}
  and 
  \begin{align*}
      \overline{\eta} & = \overline{m} + \overline{s} \\
      \xi & = n + t. 
  \end{align*} 
  
    \emph{\bf{(2) (Case of $\riem_2$).}}
  Let $\xi \in \mathcal{A}(\mathscr{R})$, $\overline{\eta} \in \mathcal{A}(\mathscr{R})$, $\alpha_2 \in \mathcal{A}(\riem_2)$, and  $\overline{\beta_2} \in \mathcal{A}(\riem_2)$.  Assume that 
  $(\alpha_2 + \overline{\beta}_2 ) - \mathbf{R}^{\mathrm{h}}_2 (\xi + \overline{\eta}) \in \mathcal{A}^\mathrm{e}_{\mathrm{harm}}(\riem_2)$.  
  There are $\overline{m},\overline{s} \in \overline{\mathcal{A}(\mathscr{R})}$,
  $n,t \in \mathcal{A}(\mathscr{R})$, and $d\omega \in \mathcal{A}_{\mathrm{bw}}(\riem_2)$ such that 
  \begin{align*}
      \alpha_2 - \partial \omega - \overline{m} - t & \in \mathcal{A}^\mathrm{e}_{\mathrm{harm}}(\riem_2) \\
      \overline{\beta}_2 - \overline{\partial} \omega - n - \overline{s} & \in \mathcal{A}^\mathrm{e}_{\mathrm{harm}}(\riem_2) 
  \end{align*}
  and 
  \begin{align*}
      \overline{\eta} & = \overline{m} + \overline{s} \\
      \xi & = n + t. 
  \end{align*}
  \end{lemma}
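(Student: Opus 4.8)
The plan is to reduce both cases to a single statement about the holomorphic form $\alpha_k$, and then resolve that statement cohomologically, with the connectedness of $\riem_2$ dictating whether a bridgeworthy correction is needed. First I would observe that, once $t$ and $\overline m$ (and in Case 2 the form $d\omega$) are chosen so that the first displayed membership holds, the second is forced. Setting $n=\xi-t$ and $\overline s=\overline\eta-\overline m$, a direct computation gives, in Case 1,
\[ \overline\beta_1 - n - \overline s = \big[(\alpha_1+\overline\beta_1) - \mathbf R^{\mathrm h}_1(\xi+\overline\eta)\big] - \big[\alpha_1 - \overline m - t\big], \]
a difference of two exact forms, hence exact; in Case 2 the same computation produces an extra summand $-d\omega$, which is exact as well. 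Thus the hypothesis is used exactly once, to transfer exactness from the sum $\alpha_k+\overline\beta_k$ onto the $\overline\beta_k$-component, and the whole lemma collapses to: find $t\in\mathcal A(\mathscr R)$, $\overline m\in\overline{\mathcal A(\mathscr R)}$ (and $d\omega\in\mathcal A_{\mathrm{bw}}(\riem_2)$ in Case 2) realizing $\alpha_k$ modulo $\mathcal A^{\mathrm e}_{\mathrm{harm}}(\riem_k)$.

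For Case 1 the claim becomes that the class $[\alpha_1]\in H^1(\riem_1;\mathbb C)$ lies in the image of the restriction map $H^1(\mathscr R;\mathbb C)\to H^1(\riem_1;\mathbb C)$. Granting this, the Hodge decomposition on compact $\mathscr R$ (the case $U=\mathscr R$ of \eqref{direct sum decomposition}) lets me pick a harmonic representative and split it as $t+\overline m$ with $t$ holomorphic and $\overline m$ anti-holomorphic, whence $\alpha_1-\overline m-t$ is closed with vanishing periods, i.e. in $\mathcal A^{\mathrm e}_{\mathrm{harm}}(\riem_1)$. To prove the surjectivity I would run Mayer--Vietoris for $\mathscr R=\riem_1\cup_\Gamma\riem_2$: the obstruction to extending a class on $\riem_1$ is that its boundary periods on $\Gamma$ lie in the image of $H^1(\riem_2;\mathbb C)\to H^1(\Gamma;\mathbb C)$. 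Since $\riem_2$ is connected, that image is the full hyperplane cut out by the single relation $\sum_k\oint_{\partial_k\riem_2}=0$, which is exactly the constraint already obeyed by any class on $\riem_1$; this surjectivity of the boundary-period map for $\riem_2$ is precisely what Corollary \ref{co:boundary_periods_specified_starmeasure} furnishes. Concretely I expect to name $t=\mathbf S_1\alpha_1$ (the adjoint of restriction) and $\overline m$ in the range of $\overline{\mathbf T}_{\riem_1}$, with exactness of $\alpha_1-t-\overline m$ coming from the comparison/jump identity and the cohomology computations for ranges of Schiffer operators in \cite[Section 4]{Schippers_Staubach_scattering_III}.

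For Case 2 the same reduction applies, but now $\riem_1$ may be disconnected, so $H^1(\mathscr R;\mathbb C)\to H^1(\riem_2;\mathbb C)$ need not be surjective: if $\riem_1$ has $c$ components, the image of $H^1(\riem_1;\mathbb C)\to H^1(\Gamma;\mathbb C)$ drops by $c-1$ dimensions (one sum-relation per component), so the cokernel on $\riem_2$ has dimension $c-1$. The point is that this cokernel is spanned exactly by the classes $[\partial\omega]$ of the holomorphic parts of bridgeworthy harmonic measures $d\omega\in\mathcal A_{\mathrm{bw}}(\riem_2)$: such an $\omega$ is constant on each boundary curve and takes a common value on all curves bounding a given component of $\riem_1$, so $d\omega$ is exact while $[\partial\omega]=-[\overline\partial\omega]$ records precisely the grouped boundary fluxes, whose linear independence modulo the image follows from the nondegeneracy in Theorem \ref{th:period_matrix_invertible}. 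I would therefore first choose $d\omega\in\mathcal A_{\mathrm{bw}}(\riem_2)$ so that $[\alpha_2]-[\partial\omega]$ lands in the image of restriction, and then apply the Case 1 argument to $\alpha_2-\partial\omega$ to produce $t$ and $\overline m$.

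The hard part is this cohomological realizability at the centre of each case --- surjectivity of the restriction map in Case 1, and the identification of its cokernel with the bridgeworthy measures in Case 2 --- since this is where the connectivity hypothesis on $\riem_2$ and the asymmetry between the two pieces genuinely enter. Carrying it out at the level of concrete forms, rather than abstract cohomology, is what requires the characterizations of the kernels and images of the Schiffer operators and their interaction with cohomology established in \cite[Section 4]{Schippers_Staubach_scattering_III}; these are what let me name the realizing forms through $\mathbf S_k$ and the ranges of $\overline{\mathbf T}_{\riem_k}$, and thereby keep the output in a form usable for the scattering matrix later. The remaining steps --- the Hodge splitting, the period bookkeeping, and verifying that the chosen $\omega$ is bridgeworthy --- are routine once the realizability is in hand.
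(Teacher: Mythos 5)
Your proposal is correct and rests on the same two pillars as the paper's proof: connectedness of $\riem_2$ forces the restriction $H^1(\mathscr{R};\mathbb{C})\to H^1(\riem_1;\mathbb{C})$ to be surjective (so every class of $\alpha_1$ is realized by a global harmonic form, split by Hodge theory into $t+\overline{m}$), while in the other direction the cokernel of $H^1(\mathscr{R};\mathbb{C})\to H^1(\riem_2;\mathbb{C})$ is exactly the $(q-1)$-dimensional span of the bridgeworthy classes, with independence coming from the positive definiteness of the (grouped) boundary period matrix of Theorem \ref{th:period_matrix_invertible}. The paper carries this out with an explicit dual homology basis $\{H_{C_j^k},H_{\Gamma_l},H_{b_r}\}$ rather than Mayer--Vietoris, but that is cosmetic. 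Where you genuinely diverge, and improve on the paper, is the opening reduction: by setting $n=\xi-t$ and $\overline{s}=\overline{\eta}-\overline{m}$ you get the membership for $\overline{\beta}_k$ for free as a difference of exact forms (plus the exact $d\omega$ in Case 2), so you only ever have to realize the class of $\alpha_k$. The paper instead decomposes $\alpha_k$ and $\overline{\beta}_k$ separately (producing $\overline{M},T$ and $N,\overline{S}$ and, in Case 2, two a priori different bridgeworthy functions $\omega,\omega'$), and must then argue that $\omega'$ may be taken equal to $\omega$ and correct by $u/2$, $\overline{v}/2$ to enforce $\overline{m}+\overline{s}=\overline{\eta}$ and $n+t=\xi$; your route eliminates both of these steps. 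The one stray remark --- that you expect to name $t=\mathbf{S}_1\alpha_1$ and put $\overline{m}$ in the range of $\overline{\mathbf{T}}_{\riem_1}$ --- is not needed for this lemma and is not what the paper does here (those identifications belong to Lemma \ref{le:decomposition_lemma}), but since the statement imposes no particular form on $m,n,s,t$, it does no harm.
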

  Note that we are not claiming any relation between the $m,n,s,t$ in parts (1) and (2).  
  \begin{proof} 
   In the proof, we will require a basis for the cohomology of $\mathscr{R}$, which we now describe.  Let $g_k$, $k=1,\, 2,$ be the genus of $\Sigma_k$. Assume that there are $p$ curves in the complex $\Gamma$, and assume that there are $q$ connected components  $\riem_1^j$, $j=1,\ldots,q$ of $\riem_1$. Let $g_1^1,\ldots,g_1^q$ be the genuses of these components, so that $g_1 = g_1^1 + \cdots g_1^q$. 
 We then have that 
 \[ g_1+g_2+(p-q) = g,  \]
 so we can define $g_d= p-q$ to be the number of ``dissected handles''.
 Choose a homology basis for $\mathscr{R}$ consisting of 
 \begin{itemize}
     \item $2g_1$ curves $C^1_1,\ldots,C^1_{2g_1}$ corresponding to the handles in $\riem_1$;
     \item $2g_2$ curves $C^2_1,\ldots,C^2_{2g_2}$ corresponding to the handles in $\riem_2$; 
     \item a collection of boundary curves $\Gamma_1,\ldots,\Gamma_{p-q}$ containing $n_j-1$ boundary curves of $\riem_1^j$ for $j=1,\ldots,q$;
     \item a collection of curves $b_1,\ldots,b_{p-q}$ encircling each dissected handle. 
 \end{itemize} 
 There are $q$ boundary curves, call them $e_1,\ldots,e_q$, which are not in the span of this basis, one for each connected component of $\riem_1$.  
 
 We first claim that 
 \begin{enumerate}[label=(\alph*),font=\upshape]
     \item given any $\gamma_1 \in \mathcal{A}_{\mathrm{harm}}(\riem_1)$, there is a $\zeta_1 \in \mathcal{A}_{\mathrm{harm}}(\mathscr{R})$ such that $\gamma_1  - \zeta_1$ is exact in $\riem_1$; and 
     \item  given any $\gamma_2 \in \mathcal{A}_{\mathrm{harm}}(\riem_2)$, there is a $\zeta_2 \in \mathcal{A}_{\mathrm{harm}}(\riem_2)$ and a $d\omega\in \mathcal{A}_{\mathrm{bw}}(\riem_2)$ such that $\gamma_2 - \ast d\omega - \zeta_2$ is exact in $\riem_2$.
 \end{enumerate}
 To show this, consider a dual basis of harmonic one-forms on $\mathscr{R}$ which we denote by $H = \{ H_{C_j^k}, H_{\Gamma_l}, { H_{b_r}} \}$ with the usual meaning. Claim (a) follows from the fact that there is a unique element 
 \[  \zeta_1 \in \mathrm{span}  
 \{ H_{C_1^1},\ldots,H_{C_{2g_1}^1},H_{\Gamma_1},\ldots,H_{\Gamma_{p-q}}  \}  \]
 such that $\gamma_1 - \zeta_1$ is exact, since the set of curves spans the homology of each connected component of $\riem_1$.  
 
 To prove claim (b), observe that 
 one may remove all the $C^2_j$ periods of $\gamma_2$ using a 
 \[  \zeta_2 \in \mathrm{span} \{ H_{C_1^2},\ldots,H_{C_{2g_2}^2} \}.     \]
 That is, we can arrange that $\gamma_2 - \zeta_2$
 has zero periods over all $\Gamma_1,\ldots,\Gamma_{p-q}$ and $C^2_1,\ldots,C^2_{2g_2}$.
 
 As observed above, there are $q$ boundary curves,  $e_1,\ldots,e_{q}$ say, which are not contained in the collection $\Gamma_1,\ldots,\Gamma_{p-q}$. Any one of these, say $e_q$, is a linear combination of the remaining curves $e_1,\ldots,e_{q-1}$.  {Let $\omega_j \in \mathcal{D}_{\mathrm {bw}}(\riem_2)$ be one on the boundary of the $j$th connected component of $\riem_1$ and $0$ on the others. Since one may specify the period of $\ast d\omega$ on $e_1,\ldots,e_{q-1}$ by \cite[Proposition 3.34]{Schippers_Staubach_scattering_III}, it is enough to show that $\{ \ast d\omega_1,\ldots,\ast d\omega_{q-1} \} \cup \{ H_{C_1^2},\ldots,H_{C_{2g_2}^2} \}$
 is linearly independent.    }
 
 Assume that 
 \[  \sum_{j=1}^{q-1} \mu_j \ast d\omega_j + \sum_{l} \lambda_l H_{C^2_l} =0   \]
 where $H_l$ range over the elements of $H$.  We must have
 \[  0= \int_{e_r}   \Big( \sum_{j=1}^{q-1} \mu_j \ast d\omega_j + \sum_{l} \lambda_l H_{C^2_l}\Big) = \mu_r \ast d \omega_r      \]
 for $r=1,\ldots,q-1$, so $\mu_r = 0$ by Theorem \ref{th:period_matrix_invertible}.  Thus 
 \[ \sum_{l} \lambda_l H_{C^2_l} =0 \]
 and the claim now follows from linear independence of elements of $\{ H_{C_1^2},\ldots,H_{C_{2g_2}^2} \}.$

 We shall first prove claim (2) of the lemma, which has the additional issue of the possibility of bridgeworthy forms to deal with. The proof of claim (1) is similar to that of (2), but without this complication. Apply claim (b) of the proof above to obtain $\zeta_2$, $\omega$, $\zeta_2'$, and $\omega'$ such that 
 \begin{align} \label{eq:remove_bridge_temp}
      \alpha_2 -  \zeta_2 - \partial \omega & \in \mathcal{A}^{\mathrm{e}}_{\mathrm{harm}}(\riem_2) \nonumber \\
     \overline{\beta}_2 - \zeta_2' - \overline{\partial} \omega' & \in \mathcal{A}^{\mathrm{e}}_{\mathrm{harm}}(\riem_2).
 \end{align}
 Here we have used the facts that 
 \begin{equation} \label{eq:partial_decomp_temp} 
   \partial \omega = 1/2 (d\omega +i \ast d \omega)  \ \ \ \text{and} \ \ \  \overline{\partial} \omega = 1/2 (d\omega - i \ast d \omega). 
 \end{equation}
 First, we will show that we may take $\overline{\partial} \omega'= \overline{\partial} \omega$ in \eqref{eq:remove_bridge_temp}.  To see this, observe that  
 \[  \alpha_2 + \overline{\beta}_2 - (\xi + \overline{\eta})   \]
 and 
 \[  \alpha_2 + \overline{\beta}_2 -  \zeta_2 - \zeta_2' - \partial \omega - \overline{\partial} \omega'  \]
 are both exact in $\riem_2$.  Subtracting we see that 
 \[  -\xi - \overline{\eta} + \zeta_2 + \zeta_2' + \partial \omega + \overline{\partial} \omega'   \]
 is exact. By the linear independence of the periods of  $\mathcal{A}_{\mathrm{harm}}(\mathscr{R})$ and $\ast d\omega$ for $\omega$ bridgeworthy established above, we must have that 
 $\partial \omega + \overline{\partial} \omega'$ is exact.  Again using \eqref{eq:partial_decomp_temp}, we see that the periods in $\riem_2$ of $\ast d\omega$ and $\ast d\omega'$ agree, so we may take $\omega'=\omega$ in \eqref{eq:remove_bridge_temp} as claimed. 
 
 Writing $\zeta_2=\overline{M}+T$ and $\zeta_2'=N + \overline{S}$ for 
   $\overline{M},\overline{S} \in \overline{\mathcal{A}(\mathscr{R})}$ and $N,T \in \mathcal{A}(\mathscr{R})$, we have thus shown 
    \begin{align*}
      \alpha_2 - \overline{M} - T - \partial \omega& \in \mathcal{A}^\mathrm{e}_{\mathrm{harm}}(\riem_2) \\
      \beta_2 - N - \overline{S} - \overline{\partial} \omega & \in \mathcal{A}^\mathrm{e}_{\mathrm{harm}}(\riem_2).
  \end{align*}  
  Note that $M$, $T$, $N$, and $S$ are not uniquely determined, and we must adjust them to complete the theorem. 
  Since $\alpha_2 + \overline{\beta}_2 - ( \xi + \overline{\eta})$ and $d\omega$ are in $\mathcal{A}^\mathrm{e}_{\mathrm{harm}}(\riem_2)$ we have 
  \[  \overline{M} + T + N + \overline{S} -(\xi + \overline{\eta}) \in \mathcal{A}^{\mathrm{e}}_{\mathrm{harm}}(\riem_2).       \]
  Define $u \in \mathcal{A}(\mathscr{R})$ and $\overline{v} \in \mathcal{A}(\mathscr{R})$ by
  \begin{align*}
      u & = N + T - \xi \\
      \overline{v} & = \overline{M} + \overline{S} - \overline{\eta};
  \end{align*}
  These satisfy
  \[  \int_{C^2_j} u = - \int_{C^2_j} \overline{v}    \]
  for $j=1,\ldots,2 g_2$. 
  Therefore if we set
  \begin{align*}
      \overline{m} & = \overline{M} - \overline{v}/2\\
      t & = T - u/2 \\
      \overline{s} & = \overline{S} -\overline{v}/2 \\
      n & = N -u/2
  \end{align*}
  it still holds that 
  \begin{align*}
    \alpha_2 - \overline{m} - t & \in \mathcal{A}^\mathrm{e}_{\text{harm}}(\riem_2)  \\
    \overline{\beta}_2 - \overline{s} -n & \in {\mathcal{A}^\mathrm{e}_{\text{harm}}(\riem_2)}.  
  \end{align*}
  Since $\overline{m} + \overline{s} = \overline{\eta}$ and $n + t = \xi$ this completes the proof of part (2) of the lemma.
  
  The proof of part (1) is identical, except that one may start directly with 
  \begin{align*}
      \alpha_1 - \overline{M}- T & \in \mathcal{A}^{\mathrm{e}}_{\mathrm{harm}}(\riem_1) \\
      \overline{\beta}_1 - N - \overline{S} & \in \mathcal{A}^{\mathrm{e}}_{\mathrm{harm}}(\riem_1).
  \end{align*}
  (Here of course the $M,T,N,S$ are not necessarily the same as those in the proof of (2).)
  \end{proof}

  By Theorem \ref{th:weak_compatible_exists}, given $\alpha_2+ \overline{\beta}_2 \in \mathcal{A}_{\mathrm{harm}}(\riem_2)$ and $\zeta = \xi + \overline{\eta} \in \mathcal{A}_{\mathrm{harm}}(\mathscr{R})$, there is a weakly compatible one-form $\alpha_1 + \overline{\beta}_1 \in \mathcal{A}_{\mathrm{harm}}(\riem_1)$ with respect to $\zeta$. As promised, we now show that there is a $\alpha_1 + \overline{\beta}_1$ which is in fact compatible, in the case that $\riem_2$ is connected. Furthermore, this compatible form is given by  
  \[  \alpha_1 + \overline{\beta}_1 = \mathbf{O}^\mathrm{e}\left[ \alpha_2 + \overline{\beta}_2 - \mathbf{R}_2 \xi - \overline{\mathbf{R}}_2 \overline{\eta} \right] +  \mathbf{R}_1 \xi + \overline{\mathbf{R}}_1 \overline{\eta}.  \]
  
  To do this, we require a decomposition for harmonic one-forms which is convenient from the point of view of the action of the Schiffer operators. This decomposition will also play a central role in the proof of the unitarity of the scattering matrix. \\
  
  \begin{lemma}[Decomposition lemma] \label{le:decomposition_lemma}
   Assume that $\riem_2$ is connected.\\ 
   
   \emph{\bf{(1) (Case of $\riem_1$).}}  
   Let $\alpha_1,\beta_1 \in \mathcal{A}(\riem_1)$ and $\xi,\eta \in \mathcal{A}(\mathscr{R})$ be such that $\alpha_1 + \overline{\beta}_1 - (\xi+\overline{\eta})$ is exact in $\riem_1$. Then there are $\tau_2,\sigma_2 \in \mathbf{R}_2 \mathcal{A}(\mathscr{R})$ and $\mu_2,\nu_2 \in [\mathbf{R}_2 \mathcal{A}(\mathscr{R}))]^\perp$  such that 
   \begin{align}  \label{eq:compatible_breakdown_A1}
      \alpha_1 - \overline{\mathbf{R}}_1 \overline{\mathbf{S}}_2 \overline{\mu}_2 - \mathbf{R}_1 \mathbf{S}_2 \tau_2 & \in \mathcal{A}^\mathrm{e}_{\mathrm{harm}}(\riem_1) \nonumber\\ 
      \overline{\beta}_1 - \mathbf{R}_1 \mathbf{S}_2 \nu_2 - \overline{\mathbf{R}}_1 \overline{\mathbf{S}}_2 \overline{\sigma}_2  & \in \mathcal{A}^\mathrm{e}_{\mathrm{harm}}(\riem_1)   
  \end{align}
  and 
 \begin{align} \label{eq:compatible_breakdown_B1}
    \overline{\mathbf{S}}_2 \overline{\mu}_2 +  \overline{\mathbf{S}}_2 \overline{\sigma}_2
   & = \overline{\eta} \nonumber  \\
    \mathbf{S}_2 \nu_2 + \mathbf{S}_2 \tau_2 & = \xi.  
 \end{align} 
 Furthermore, there are $\gamma_2, \rho_2 \in \mathcal{A}(\riem_1)$ such that
 \begin{align} \label{eq:compatible_breakdown_C1}
     \alpha_1 & = \mathbf{T}_{2,1} \overline{\gamma}_2 + \mathbf{R}_1 \mathbf{S}_2 \tau_2 \\ \nonumber
     \overline{\beta}_1 & = \overline{\mathbf{T}}_{2,1} \rho_2 + \overline{\mathbf{R}_1} \overline{\mathbf{S}}_2 \overline{\sigma}_2
 \end{align}
 where $\overline{\gamma}_k$ and $\rho_k$ are decomposed as follows:  
 \begin{align} \label{eq:compatible_breakdown_D1}
       \overline{\gamma}_2 & = - \overline{\mu}_2 + \overline{\delta}_2, \ \ \  \overline{\mu}_2 \in [\overline{\mathbf{R}}_2 \overline{\mathcal{A}(\mathscr{R})}], \ \ \  \overline{\delta}_2 \in [\overline{\mathbf{R}}_2 \overline{\mathcal{A}(\mathscr{R})}]^\perp,  \nonumber \\
       \rho_2 & = - \nu_2 + \varepsilon_2, \ \ \ \nu_2 \in \mathbf{R}_2 \mathcal{A}(\mathscr{R}), \ \ \ \varepsilon_2 \in [\mathbf{R}_2 \mathcal{A}(\mathscr{R}) ]^\perp. 
 \end{align} 
   
   \emph{\bf{(2) (Case of  $\riem_2$)}}.  Let $\alpha_2,\beta_2 \in \mathcal{A}(\riem_2)$ and $\xi,\eta \in \mathcal{A}(\mathscr{R})$ be such that $\alpha_2 + \overline{\beta}_2 - (\xi+\overline{\eta})$ is exact in $\riem_2$. Then there are $\tau_1,\sigma_1 \in \mathbf{R}_1 \mathcal{A}(\mathscr{R})$, $\mu_1,\tau_1 \in [\mathbf{R}_1 \mathcal{A}(\mathscr{R}))]^\perp$, 
   and a $d\omega \in \mathcal{A}_{\mathrm{bw}}(\riem_2)$, such that 
   \begin{align} \label{eq:compatible_breakdown_A2}
          \alpha_2 -\partial \omega - \overline{\mathbf{R}}_2 \overline{\mathbf{S}}_1 \overline{\mu}_1 - \mathbf{R}_2 \mathbf{S}_1 \tau_1 & \in \mathcal{A}^\mathrm{e}_{\mathrm{harm}}(\riem_2) \nonumber \\  
      \overline{\beta}_2 - \overline{\partial} \omega- \mathbf{R}_2 \mathbf{S}_1 \nu_1 - \overline{\mathbf{R}}_2 \overline{\mathbf{S}}_1 \overline{\sigma}_1  & \in \mathcal{A}^\mathrm{e}_{\mathrm{harm}}(\riem_2)  
   \end{align}
   and 
   \begin{align} \label{eq:compatible_breakdown_B2}
       \overline{\mathbf{S}}_1 \overline{\mu}_1 + \overline{\mathbf{S}}_1 \overline{\sigma}_1 & = \overline{\eta} \nonumber \\
       \mathbf{S}_1 \nu_1 + \mathbf{S}_1 \tau_1 & = \zeta. 
   \end{align}
   Furthermore, there are $\gamma_1,\rho_1 \in \mathcal{A}(\riem_1)$ such that 
   \begin{align} \label{eq:compatible_breakdown_C2}
         \alpha_2 & = \partial \omega + \mathbf{T}_{1,2} \overline{\gamma}_1 + \mathbf{R}_2 \mathbf{S}_1 \tau_1  \nonumber \\ 
     \overline{\beta}_2 & = \overline{\partial} \omega + \overline{\mathbf{T}}_{1,2} \rho_1 + \overline{\mathbf{R}}_2 \overline{\mathbf{S}}_1 \overline{\sigma}_1 
   \end{align}  
    where $\overline{\gamma}_1$ and $\rho_1$ are decomposed as follows:
 \begin{align} \label{eq:compatible_breakdown_D2}
       \overline{\gamma}_1 & = - \overline{\mu}_1 + \overline{\delta}_1, \ \ \  \overline{\mu}_1 \in [\overline{\mathbf{R}}_1 \overline{\mathcal{A}(\mathscr{R})}], \ \ \  \overline{\delta}_1 \in [\overline{\mathbf{R}}_1 \overline{\mathcal{A}(\mathscr{R})}]^\perp,  \nonumber \\
       \rho_1 & = - \nu_1 + \varepsilon_1, \ \ \ \nu_1 \in \mathbf{R}_1 \mathcal{A}(\mathscr{R}), \ \ \ \varepsilon_1 \in [\mathbf{R}_1 \mathcal{A}(\mathscr{R}) ]^\perp. 
 \end{align} 
  \end{lemma}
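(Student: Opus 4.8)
The plan is to bootstrap from Lemma~\ref{le:scattering_preparation}, which already separates out the cohomology, and then to rewrite its output through the Schiffer operators using the adjoint and kernel/image identities of Part~I~\cite{Schippers_Staubach_scattering_III}. First I would apply Lemma~\ref{le:scattering_preparation} (in case~(1) to $\alpha_1+\overline{\beta}_1$, in case~(2) to $\alpha_2+\overline{\beta}_2$), obtaining $\overline{m},\overline{s}\in\overline{\mathcal{A}(\mathscr{R})}$ and $n,t\in\mathcal{A}(\mathscr{R})$ --- and, only in case~(2), a bridgeworthy $d\omega$ --- so that, modulo $\mathcal{A}^{\mathrm{e}}_{\mathrm{harm}}(\riem_k)$, the form $\alpha_k$ equals $\overline{m}+t$ (plus $\partial\omega$ in case~(2)) and $\overline{\beta}_k$ equals $n+\overline{s}$ (plus $\overline{\partial}\omega$ in case~(2)), with $\overline{\eta}=\overline{m}+\overline{s}$ and $\xi=n+t$. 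This reduces everything to re-expressing the four forms $m,s,n,t$ on $\mathscr{R}$ through Schiffer operators acting on forms on the opposite piece.

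For that rewriting I would use two structural facts. The first is the resolution of the identity $\mathbf{S}_1\mathbf{R}_1+\mathbf{S}_2\mathbf{R}_2=\mathrm{Id}$ on $\mathcal{A}(\mathscr{R})$, which is immediate from the reproducing property~\eqref{eq:Bergman_reproducing}; it lets me move a form on $\mathscr{R}$ to a form on $\riem_2$ (respectively $\riem_1$) and back. The second is the adjoint relation between $\mathbf{R}_k$ and $\mathbf{S}_k$ together with the characterization of the ranges of the comparison operators from Part~I. Decomposing the relevant forms on $\riem_2$ orthogonally with respect to $\mathbf{R}_2\mathcal{A}(\mathscr{R})$ produces the building blocks $\tau_2,\sigma_2\in\mathbf{R}_2\mathcal{A}(\mathscr{R})$ and $\mu_2,\nu_2\in[\mathbf{R}_2\mathcal{A}(\mathscr{R})]^{\perp}$; substituting these through $\mathbf{S}_2$ and $\overline{\mathbf{S}}_2$ and matching holomorphic with holomorphic and anti-holomorphic with anti-holomorphic yields \eqref{eq:compatible_breakdown_A1} and \eqref{eq:compatible_breakdown_B1}. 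To upgrade these modulo-exact statements to the exact equalities \eqref{eq:compatible_breakdown_C1}--\eqref{eq:compatible_breakdown_D1}, I would invoke the Part~I identity comparing $\mathbf{R}_1\mathbf{S}_2$ with $\overline{\mathbf{T}}_{2,1}$ (their difference being controlled cohomologically), which rewrites the $\mathbf{S}$-representation as an exact identity through $\mathbf{T}_{2,1}$, with $\overline{\gamma}_2$ and $\rho_2$ assembled from $\mu_2,\nu_2$ and the complementary pieces $\delta_2,\varepsilon_2$ exactly as in \eqref{eq:compatible_breakdown_D1}.

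Case~(2) carries the extra ingredient of the bridgeworthy form. Since $\riem_2$ is connected, the additional boundary-period freedom is spanned by the $\ast d\omega_j$ attached to the connected components of $\riem_1$, and I would absorb it into $\partial\omega$ and $\overline{\partial}\omega$ via the splitting~\eqref{eq:partial_decomp_temp}, using Theorem~\ref{th:period_matrix_invertible} (as in the proof of Lemma~\ref{le:scattering_preparation}) to guarantee that both halves can be taken from one and the same $\omega$. The hard part will be the cohomological bookkeeping in the middle step: one must follow how each period is distributed among the holomorphic/anti-holomorphic and the $\mathbf{R}_k\mathcal{A}(\mathscr{R})$/complement components, and verify that every cross term generated by the Schiffer operators actually lands in $\mathcal{A}^{\mathrm{e}}_{\mathrm{harm}}(\riem_k)$. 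This is exactly where the interaction of $\mathbf{T}_{2,1}$ and $\overline{\mathbf{T}}_{2,1}$ with cohomology, and the kernel and image characterizations of the Schiffer operators proved in \cite{Schippers_Staubach_scattering_III}, carry the argument; the asymmetry between $\riem_1$ and $\riem_2$ is what forces the two cases to be treated separately despite their identical skeleton.
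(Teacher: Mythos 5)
Your overall strategy is the paper's: both proofs start by applying Lemma~\ref{le:scattering_preparation} to peel off the cohomology (with the bridgeworthy form appearing only in case (2)), and then convert the resulting forms $\overline{m},\overline{s},n,t$ on $\mathscr{R}$ into the Schiffer-operator expressions using the Part~I machinery. The paper's proof is exactly this and nothing more: claims \eqref{eq:compatible_breakdown_A1}--\eqref{eq:compatible_breakdown_B1} and \eqref{eq:compatible_breakdown_A2}--\eqref{eq:compatible_breakdown_B2} come from Lemma~\ref{le:scattering_preparation} together with the fact that $\mathbf{S}_k\mathbf{R}_k$ is an isomorphism of $\mathcal{A}(\mathscr{R})$ (Part~I, Theorem~4.2); the exact identities \eqref{eq:compatible_breakdown_C1}, \eqref{eq:compatible_breakdown_C2} come from the range theorem for $\mathbf{T}_{j,k}$ with prescribed cohomology (Part~I, Theorem~4.16); and the splittings \eqref{eq:compatible_breakdown_D1}, \eqref{eq:compatible_breakdown_D2} come from Part~I, Theorems~4.1, 4.7 and 4.12.

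The one step where your plan would not go through as written is the production of $\tau_2,\sigma_2$ (and their case-(2) analogues) in $\mathbf{R}_2\mathcal{A}(\mathscr{R})$ with $\mathbf{S}_2\tau_2=t$ and $\overline{\mathbf{S}}_2\overline{\sigma}_2=\overline{s}$. You assign this job to the resolution of the identity $\mathbf{S}_1\mathbf{R}_1+\mathbf{S}_2\mathbf{R}_2=\mathrm{Id}$, which you call immediate from the reproducing property. That identity only says the two pieces sum to the identity; it does not let you solve $\mathbf{S}_2\mathbf{R}_2\,x=t$ for $x$, which is what is needed here. The paper instead invokes that $\mathbf{S}_2\mathbf{R}_2$ is an \emph{isomorphism} of $\mathcal{A}(\mathscr{R})$ --- a substantive theorem of Part~I, not a consequence of the reproducing property. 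Your ``second structural fact'' (adjoints plus range characterizations) gestures in the right direction, but the explicit mechanism you name for moving forms back and forth is the wrong one. Likewise, upgrading the modulo-exact statements to the exact equalities \eqref{eq:compatible_breakdown_C1} is not done by an operator identity comparing $\mathbf{R}_1\mathbf{S}_2$ with $\overline{\mathbf{T}}_{2,1}$; it is a surjectivity statement for $\mathbf{T}_{2,1}$ onto forms with prescribed cohomology (Part~I, Theorem~4.16). With those two inputs correctly identified, your outline coincides with the paper's proof.
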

  \begin{proof}
   The claims \eqref{eq:compatible_breakdown_A1} and \eqref{eq:compatible_breakdown_B1} in part (1) follow directly from   Lemma \ref{le:scattering_preparation} part (1), using the fact that $\mathbf{S}_2 \mathbf{R}_2$ is an isomorphism by \cite[Theorem 4.2]{Schippers_Staubach_scattering_III}.  Similarly, the claims \eqref{eq:compatible_breakdown_A2} and  \eqref{eq:compatible_breakdown_B2} in part (2) follow directly from   Lemma \ref{le:scattering_preparation} part (2), using the fact that $\mathbf{S}_1 \mathbf{R}_1$ is an isomorphism by \cite[Theorem 4.2]{Schippers_Staubach_scattering_III}. 
   
   The claims \eqref{eq:compatible_breakdown_C1} and \eqref{eq:compatible_breakdown_C2} follow directly from \cite[Theorem 4.16]{Schippers_Staubach_scattering_III}. The decompositions of $\gamma_k$ and $\rho_k$ in \eqref{eq:compatible_breakdown_D1} and \eqref{eq:compatible_breakdown_D2} follow from \cite[Theorems 4.1, 4.7, 4.12]{Schippers_Staubach_scattering_III}.
  \end{proof}
  
  \begin{theorem} \label{th:compatibility_existence}  
   Assume that $\riem_2$ is connected.\\
   
   \emph{\bf{(1) (Case of $\riem_1$).}}  Let $\alpha_1+ \beta_1 \in \mathcal{A}_{\mathrm{harm}}(\riem_1)$ and $\zeta = \xi + \overline{\eta} \in \mathcal{A}(\mathscr{R})$ be such that $\alpha_1 + \overline{\beta}_1 - \mathbf{R}_1^{\textnormal h} \zeta \in \mathcal{A}^{\mathrm{e}}_{\mathrm{harm}}(\riem_1)$. 
   There is a $\alpha_2 + \overline{\beta}_2 \in \mathcal{A}_{\mathrm{harm}}(\riem_2)$ which is compatible with $\alpha_1 + \overline{\beta}_1$ {with respect to} $\zeta$.  Given any other compatible $\alpha_2'+\overline{\beta}_2'$, the difference $\alpha_2'+ \overline{\beta}_2' - \alpha_2 +\overline{\beta}_2 \in \mathcal{A}_{\mathrm{bw}}(\riem_2)$. 
   
   Furthermore, if $\rho_2$, $\gamma_2$, $\tau_2$, and $\sigma_2$ are given as in \emph{Lemma \ref{le:decomposition_lemma} part (1)}, then there is a $d\omega \in \mathcal{A}_{\mathrm{bw}}(\riem_2)$ such that  
   \begin{align} \label{eq:compatibility_theorem_overfared_1}
    \alpha_2 & = -\rho_2 + \mathbf{T}_{2,2} \overline{\gamma}_2 + \mathbf{R}_2 \mathbf{S}_2 \tau_2 + \partial \omega \nonumber \\
    \overline{\beta}_2 & = - \overline{\gamma}_2 + \overline{\mathbf{T}}_{2,2} \rho_2 + \overline{\mathbf{R}}_2 \overline{\mathbf{S}}_2 \overline{\sigma}_2 + \overline{\partial} \omega. 
   \end{align}

   \emph{\bf{(2) (Case of $\riem_2$).}} Let $\alpha_2+ \beta_2 \in \mathcal{A}_{\mathrm{harm}}(\riem_2)$ and $\zeta = \xi + \overline{\eta} \in \mathcal{A}(\mathscr{R})$ be such that $\alpha_2 + \overline{\beta}_2 - \mathbf{R}_2^{\mathrm{h}} \zeta \in \mathcal{A}^{\mathrm{e}}_{\mathrm{harm}}(\riem_2)$. 
   There is a unique $\alpha_1 + \overline{\beta}_1 \in \mathcal{A}_{\mathrm{harm}}(\riem_1)$ which is compatible with $\alpha_2 + \overline{\beta}_2$ {with respect to} $\zeta$, given by 
   \[  \alpha_1 + \overline{\beta}_1 = \mathbf{O}^{\mathrm{e}}_{2,1} \left( \alpha_2 + \overline{\beta}_2 - \mathbf{R}_2^{\mathrm{h}} \zeta\right) 
   + \mathbf{R}_1^{\mathrm{h}} \zeta.   \]
   
   Furthermore, if $\rho_1$, $\gamma_1$, $\tau_1$, and $\sigma_1$ are given as in Lemma \ref{le:decomposition_lemma} part(2), then 
   \begin{align}  \label{eq:compatibility_theorem_overfared_2}
    \alpha_1 & = -\rho_1 + \mathbf{T}_{1,1} \overline{\gamma}_1 + \mathbf{R}_1 \mathbf{S}_1 \tau_1  \nonumber \\
    \overline{\beta}_1 & = - \overline{\gamma}_1 + \overline{\mathbf{T}}_{1,1} \rho_1 + \overline{\mathbf{R}}_1 \overline{\mathbf{S}}_1 \overline{\sigma}_1. 
   \end{align}
  \end{theorem}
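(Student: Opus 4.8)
The plan is to treat Part (2) first, where the connectedness of $\riem_2$ forces genuine uniqueness, and then obtain Part (1) by the same mechanism while keeping track of the bridgeworthy freedom. Throughout I decompose the harmonic Schiffer condition (3) of Definition \ref{de:strongly_compatible} into its holomorphic and anti-holomorphic halves, $\mathbf{S}_1 \alpha_1 + \mathbf{S}_2 \alpha_2 = \xi$ and $\overline{\mathbf{S}}_1 \overline{\beta}_1 + \overline{\mathbf{S}}_2 \overline{\beta}_2 = \overline{\eta}$, which is legitimate since $\mathbf{S}_k$ preserves and $\overline{\mathbf{S}}_k$ conjugates the Hodge type on $\mathscr{R}$; it then suffices to verify the holomorphic identity and conjugate.

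For Part (2), since $\riem_2$ is connected the exact overfare $\mathbf{O}^{\mathrm{e}}_{2,1}$ is defined, so I set $\alpha_1 + \overline{\beta}_1 := \mathbf{O}^{\mathrm{e}}_{2,1}(\alpha_2 + \overline{\beta}_2 - \mathbf{R}_2^{\mathrm{h}}\zeta) + \mathbf{R}_1^{\mathrm{h}}\zeta$. Weak compatibility is immediate: condition (1) of Definition \ref{de:weakly_compatible} holds by definition of $\mathbf{O}^{\mathrm{e}}_{2,1}$, and (2) holds by Proposition \ref{pr:extendible_forms_overfare_themselves}. To reach \eqref{eq:compatibility_theorem_overfared_2} and verify (3) I feed the decomposition \eqref{eq:compatible_breakdown_C2} of $\alpha_2,\overline{\beta}_2$ into $\mathbf{O}^{\mathrm{e}}_{2,1}$ and overfare term by term. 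Two summands behave transparently: $\mathbf{R}_2 \mathbf{S}_1 \tau_1$ is the restriction to $\riem_2$ of the global form $\mathbf{S}_1 \tau_1 \in \mathcal{A}(\mathscr{R})$, which extends across $\Gamma$ and hence overfares to $\mathbf{R}_1 \mathbf{S}_1 \tau_1$ by Proposition \ref{pr:extendible_forms_overfare_themselves}; and $\partial \omega$ for bridgeworthy $\omega$ is a harmonic measure whose overfare to $\riem_1$ is exact, so it drops out. The essential remaining step is the Plemelj/jump identity from \cite{Schippers_Staubach_scattering_III} (the same results, Theorems 4.1, 4.7, 4.12, 4.16, that produced the decomposition lemma), which I expect to yield $\mathbf{O}^{\mathrm{e}}_{2,1}(\mathbf{T}_{1,2}\overline{\gamma}_1) = \mathbf{T}_{1,1}\overline{\gamma}_1 - \rho_1$, the correction $-\rho_1$ being precisely the jump of $\mathbf{T}_1\overline{\gamma}_1$ across $\Gamma$ expressed through the splitting \eqref{eq:compatible_breakdown_D2} of $\overline{\gamma}_1$. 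Assembling the three contributions produces \eqref{eq:compatibility_theorem_overfared_2}.

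With the explicit form in hand, condition (3) is Schiffer-operator bookkeeping. Summing $\mathbf{S}_1 \alpha_1$ and $\mathbf{S}_2 \alpha_2$, I invoke the reproducing relation $\mathbf{S}_1 \mathbf{R}_1 + \mathbf{S}_2 \mathbf{R}_2 = \mathrm{Id}$ on $\mathcal{A}(\mathscr{R})$ (a direct consequence of \eqref{eq:Bergman_reproducing}) to collapse the $\tau_1$ terms to $\mathbf{S}_1 \tau_1$, the kernel characterization $\mathbf{S}_2(\partial \omega)=0$ for bridgeworthy $\omega$, and the adjoint identities of \cite{Schippers_Staubach_scattering_III} for $\mathbf{S}_k \mathbf{T}_{1,k}$ together with $\rho_1 = -\nu_1 + \varepsilon_1$, recovering $\mathbf{S}_1 \nu_1 + \mathbf{S}_1 \tau_1 = \xi$ as in \eqref{eq:compatible_breakdown_B2}; the anti-holomorphic identity follows by conjugation. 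For uniqueness, if $\alpha_1' + \overline{\beta}_1'$ is a second compatible form, the difference $\delta$ has vanishing boundary class on every $\partial_k \riem_1$ by (1) and is exact by (2), so $\delta = dh$ with $h$ constant on each boundary curve, i.e. $\delta \in \mathcal{A}_{\mathrm{hm}}(\riem_1)$; condition (3) then forces $\mathbf{S}_1(\partial h)=0$, whence $h$ is bridgeworthy and $\delta \in \mathcal{A}_{\mathrm{bw}}(\riem_1)$. Because $\riem_2$ is connected, every boundary pair of $\riem_1$ bounds the same component, so bridgeworthiness makes $h$ constant on each component of $\riem_1$, forcing $\delta=0$.

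Part (1) runs along identical lines with the indices exchanged, using Lemma \ref{le:decomposition_lemma} part (1) and \eqref{eq:compatible_breakdown_C1}: I construct $\alpha_2 + \overline{\beta}_2$ directly from \eqref{eq:compatibility_theorem_overfared_1} (taking $\omega=0$ for existence), where now $\mathbf{R}_1 \mathbf{S}_2 \tau_2$ overfares to $\mathbf{R}_2 \mathbf{S}_2 \tau_2$ and the jump identity gives $\mathbf{T}_{2,1}\overline{\gamma}_2 \mapsto \mathbf{T}_{2,2}\overline{\gamma}_2 - \rho_2$. The only structural difference is that $\mathcal{A}_{\mathrm{bw}}(\riem_2)$ need not be trivial, since $\riem_1$ may be disconnected; the same difference argument then shows any two compatible forms differ by an element of $\mathcal{A}_{\mathrm{bw}}(\riem_2)$, and conversely adding such a form preserves all three conditions (bridgeworthy forms are exact, have trivial boundary class, and lie in $\ker \mathbf{S}_2$), which is exactly the free $\partial \omega$ in \eqref{eq:compatibility_theorem_overfared_1}. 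The main obstacle in the whole argument is pinning down the jump identity for $\mathbf{O}^{\mathrm{e}}$ applied to the $\mathbf{T}$-terms and matching its correction term with $\rho_k$; everything else is citation or linear bookkeeping with the orthogonal splittings \eqref{eq:compatible_breakdown_D1} and \eqref{eq:compatible_breakdown_D2}.
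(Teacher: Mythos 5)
Your overall architecture coincides with the paper's: construct the candidate via the decomposition of Lemma \ref{le:decomposition_lemma}, push it through the overfare using a jump formula, verify condition (3) of Definition \ref{de:strongly_compatible} with the Schiffer adjoint identities, and settle uniqueness by the bridgeworthy characterization of the kernel of $\mathbf{S}^{\mathrm{h}}_1$. The uniqueness argument and the verification of condition (3) are essentially the paper's. However, the step you yourself single out as the crux is stated incorrectly, and as stated it would not go through. First, $\mathbf{O}^{\mathrm{e}}_{2,1}$ cannot be applied ``term by term'' to the summands of \eqref{eq:compatible_breakdown_C2}: neither $\mathbf{T}_{1,2}\overline{\gamma}_1$ nor $\partial\omega$ is exact on $\riem_2$ (only $d\omega = \partial\omega + \overline{\partial}\omega$ is exact, and $\mathbf{T}_{1,2}\overline{\gamma}_1$ becomes exact only after adding the restriction term $\overline{\mathbf{R}}_2\overline{\mathbf{S}}_1\overline{\gamma}_1$, by Theorem 4.7 of \cite{Schippers_Staubach_scattering_III}). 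Second, the identity you posit, $\mathbf{O}^{\mathrm{e}}_{2,1}(\mathbf{T}_{1,2}\overline{\gamma}_1) = \mathbf{T}_{1,1}\overline{\gamma}_1 - \rho_1$, misattributes the jump: the Plemelj-type correction for $\mathbf{T}_{1}\overline{\gamma}_1$ is $-\overline{\gamma}_1$, i.e.\ the density itself, which is anti-holomorphic and therefore lands in $\overline{\beta}_1$ (this is the $-\overline{\gamma}_1$ in the second line of \eqref{eq:compatibility_theorem_overfared_2}); it has nothing to do with $\rho_1$. The $-\rho_1$ in the formula for $\alpha_1$ arises instead from the jump of the conjugate term $\overline{\mathbf{T}}_{1,2}\rho_1$ occurring in the decomposition of $\overline{\beta}_2$ --- the overfare mixes the holomorphic and anti-holomorphic densities, which is precisely why each line of \eqref{eq:compatibility_theorem_overfared_2} contains one term coming from $\overline{\gamma}_1$ and one coming from $\rho_1$.

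The correct mechanism, and what the paper does, is to assemble the whole exact form $\alpha_2 + \overline{\beta}_2 - \mathbf{R}_2^{\mathrm{h}}\zeta = d\omega + \bigl(\mathbf{T}_{1,2}\overline{\gamma}_1 + \overline{\mathbf{R}}_2\overline{\mathbf{S}}_1\overline{\gamma}_1\bigr) + \bigl(\overline{\mathbf{T}}_{1,2}\rho_1 + \mathbf{R}_2\mathbf{S}_1\rho_1\bigr)$ using \eqref{eq:compatible_breakdown_B2} and \eqref{eq:compatible_breakdown_D2}, and only then apply $\mathbf{O}^{\mathrm{e}}_{2,1}$, invoking the jump formula (Proposition 4.10 of \cite{Schippers_Staubach_scattering_III}) on each of the two exact parenthesized blocks and $\mathbf{O}^{\mathrm{e}}_{2,1}d\omega = 0$ on the bridgeworthy piece. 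Your Part (1) sketch has the same defect with the indices reversed, plus an additional wrinkle: $\riem_1$ may be disconnected, so an overfare originating on $\riem_1$ is not available; there one must define $\alpha_2+\overline{\beta}_2$ outright by the formula \eqref{eq:compatibility_theorem_overfared_1} and verify equality of boundary values curve by curve, which the paper does by producing primitives on $\riem_2$ with matching CNT boundary values. None of this changes your final formulas --- they are the right ones --- but the identity you lean on to reach them is false, so the derivation needs to be repaired along these lines.
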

  \begin{proof}
    We first prove (2). By the assumptions we have the decomposition \eqref{eq:compatible_breakdown_A2} - \eqref{eq:compatible_breakdown_D2} of Lemma \ref{le:decomposition_lemma}.  
    
    Using \eqref{eq:compatible_breakdown_C2}, \eqref{eq:compatible_breakdown_B2}, and \cite[Theorem 4.1]{Schippers_Staubach_scattering_III} in that order, we see that
    \begin{align}  \label{eq:tempy_mctempface}
     \alpha_2 + \overline{\beta}_2 - \mathbf{R}_2 \xi - \overline{\mathbf{R}}_2 \overline{\eta} & = d\omega + \mathbf{T}_{1,2} \overline{\gamma}_1 + \mathbf{R}_2 \mathbf{S}_1 \tau_1 + \overline{\mathbf{T}}_{1,2} \rho_1 + \overline{\mathbf{R}}_2 \overline{\mathbf{S}}_1 \overline{\sigma}_1 - \mathbf{R}_2 \xi - \overline{\mathbf{R}}_2 \overline{\eta} \nonumber \\
     & = d\omega + \mathbf{T}_{1,2} \overline{\gamma}_1 - \mathbf{R}_2 \mathbf{S}_1 \nu_1 + \overline{\mathbf{T}}_{1,1} \rho_1 - \overline{\mathbf{R}}_2 \overline{\mathbf{S}}_1 \overline{\mu}_1 \nonumber \\ 
      & = d\omega + \mathbf{T}_{1,2} \overline{\gamma}_1 + \mathbf{R}_2 \mathbf{S}_1 \rho_1 + \overline{\mathbf{T}}_{1,1} \rho_1 + \overline{\mathbf{R}}_2 \overline{\mathbf{S}}_1 \overline{\gamma}_1.
    \end{align}
    Now since $d\omega$ is bridgeworthy, $\mathbf{O}^{\mathrm{e}}_{2,1} d\omega = 0$. Together with\cite[Proposition 4.10]{Schippers_Staubach_scattering_III} we obtain 
    \[  \mathbf{O}_{2,1}^{\mathrm{e}} \left( \alpha_2 + \overline{\beta}_2 - \mathbf{R}_2^{\mathrm{h}} \zeta \right) = -\overline{\gamma}_1 + \mathbf{T}_{1,1} \overline{\gamma}_1 + \overline{\mathbf{R}}_1 \mathbf{S}_1 \overline{\gamma}_1 
    - \rho_1 + \overline{\mathbf{T}}_{1,1} \rho_1 + \mathbf{R}_1 \mathbf{S}_1 \rho_1. \] 
    If we define  
    \[ \alpha_1 + \overline{\beta}_1 =  \mathbf{O}_{2,1}^{\mathrm{e}} \left( \alpha_2 + \overline{\beta}_2 - \mathbf{R}_2^{\mathrm{h}} \zeta \right) + \mathbf{R}_1^{\mathrm{h}} \zeta  \]
    \eqref{eq:compatible_breakdown_C2} again, we obtain that 
    $\alpha_1 + \overline{\beta}_1$ satisfies \eqref{eq:compatibility_theorem_overfared_2}. Furthermore, by construction $\alpha_1 + \overline{\beta}_1$ is weakly compatible with $\alpha_2 + \overline{\beta}_2$ {with respect to} $\zeta$. 
    
    Next we show that $\alpha_1 + \overline{\beta}_1$ is compatible.  Applying $\mathbf{S}_1$ to the expression \eqref{eq:compatibility_theorem_overfared_2} for $\alpha_1$,
    we obtain
   \begin{align} \label{eq:S_compatibility_temp_with_correction}
     \mathbf{S}_1 \alpha_1 & = - \mathbf{S}_1 \rho_1 + \mathbf{S}_1 \mathbf{T}_{1,1} \overline{\gamma}_1 + \mathbf{S}_1 \mathbf{R}_1 \mathbf{S}_1 \tau_1  & \nonumber \\
      & = - \mathbf{S}_1 \rho_1 - \mathbf{S}_2 \mathbf{T}_{1,2} \overline{\gamma}_1 + \mathbf{S}_1 \mathbf{R}_1 \mathbf{S}_1 \tau_1  &  \  \text{{\scriptsize{ \cite[Equation  3.17]{Schippers_Staubach_scattering_III}}}} \nonumber \\
      & = - \mathbf{S}_1 \rho_1 - \mathbf{S}_2 \left(  \alpha_2 - \mathbf{R}_2 \mathbf{S}_1 \tau_1 \right) + \mathbf{S}_1 \mathbf{R}_1 \mathbf{S}_1 \tau_1  &  \text{\scriptsize Equation \eqref{eq:compatible_breakdown_C2}}  \\
      & = \mathbf{S}_1 \nu_1 - \mathbf{S}_2   \alpha_2  + \left( \mathbf{S}_2 \mathbf{R}_2    + \mathbf{S}_1 \mathbf{R}_1  \right) \mathbf{S}_1 \tau_1   &  \text{\scriptsize Equation \eqref{eq:compatible_breakdown_D2}, \cite[Theorem 4.1]{Schippers_Staubach_scattering_III}} \nonumber \\
      & = - \mathbf{S}_2 \alpha_2 + \xi.
      & \text{\scriptsize Equation \eqref{eq:compatible_breakdown_B2}, \cite[Theorem 3.21]{Schippers_Staubach_scattering_III}} \nonumber
   \end{align}  
   This proves the claim.
   
   Finally, if $\alpha_1' + \overline{\beta}_1'$ is another compatible form, it is in particular weakly compatible. Thus $\alpha_1' + \overline{\beta}_1' - (\alpha_1 + \overline{\beta}_1)$ is an exact form $d\omega$ vanishing on the boundary, and hence is in $\mathcal{A}_{\mathrm{hm}}(\riem_1)$. However, since both are compatible, we also have that $\mathbf{S}_1^{\mathrm{h}} d\omega = 0$ so by \cite[Proposition 3.37]{Schippers_Staubach_scattering_III} $d\omega$ is bridgeworthy. Since $\riem_2$ is connected, the only bridgeworthy form on $\riem_1$ is $0$. This completes the proof of (2). 
   
   We now prove (1). By the assumptions we have the decomposition \eqref{eq:compatible_breakdown_A1} - \eqref{eq:compatible_breakdown_D1} of Lemma \ref{le:decomposition_lemma}.  Let $G,H \in \mathcal{D}_{\mathrm{harm}}(\riem_2)$ be such that $\partial G = \rho_2$ and $\overline{\partial} H = \overline{\gamma}_2$.  
   Such a $G$ and $H$ are guaranteed to exist by\cite[Lemma 4.6]{Schippers_Staubach_scattering_III}. Since by \cite[Theorem 3.18]{Schippers_Staubach_scattering_III}
   \[ \dot{\mathbf{J}}_{2,1} H = \dot{\mathbf{O}}_{2,1} \dot{\mathbf{J}}_{2,2} \dot{H} - \dot{\mathbf{O}}_{2,1} \dot{H} \]
   it follows using \cite[Theorem 3.8]{Schippers_Staubach_scattering_III} that
   \begin{equation} \label{eq:exact_same_boundary_values_one}
      d \mathbf{J}^q_{2,1} H = \mathbf{T}_{2,1} \overline{\gamma}_2 + \overline{\mathbf{R}}_1 \overline{\mathbf{S}}_2 \overline{\gamma}_2 \ \ \  \text{and} \ \ \  d(\mathbf{J}^q_{2,2} H - H )= - \overline{\gamma}_2 + \mathbf{T}_{2,2} \overline{\gamma}_2 + \overline{\mathbf{R}}_2 \overline{\mathbf{S}}_2 \overline{\gamma}_2 
   \end{equation}
   are exact and have primitives with the same CNT boundary values. Similarly using $G$ we obtain that
   \begin{equation} \label{eq:exact_same_boundary_values_two}
     \overline{\mathbf{T}}_{2,1} {\rho}_2 + {\mathbf{R}}_1  {\mathbf{S}}_2 {\rho}_2 \ \ \  \text{and} \ \ \   - {\rho}_2 + \overline{\mathbf{T}}_{2,2} {\rho}_2 + {\mathbf{R}}_2 {\mathbf{S}}_2 {\rho}_2 
   \end{equation}
   are exact and have primitives with the same boundary values. 
   
   Set now
   \begin{align*}
       \alpha_2 & = -\rho_2 + \mathbf{T}_{2,2} \overline{\gamma}_2 + \mathbf{R}_2 \mathbf{S}_2 \tau_2 \\
       \overline{\beta}_2 & = -\overline{\gamma}_2 + \overline{\mathbf{T}}_{2,2} \rho_2 + \overline{\mathbf{R}}_2 \overline{\mathbf{S}}_2 \overline{\mathbf{\sigma}}_2. 
   \end{align*}
   We will show that this is compatible with $\alpha_1 + \overline{\beta}_1$ with respect to $\zeta$. 
   
   To see that it is weakly compatible, it is enough to show that 
   \[  \alpha_2 + \overline{\beta}_2 - \mathbf{R}_2 \xi - \overline{\mathbf{R}}_2 \overline{\eta} \ \ \ \text{and} \ \ \ \alpha_1 + \overline{\beta}_1 - \mathbf{R}_1 \xi - \overline{\mathbf{R}}_1 \overline{\eta} \]
   are exact and have the same boundary values. We are given that the left expression is exact; a computation identical to \eqref{eq:tempy_mctempface} in part (2) shows that 
   \[  \alpha_1 + \overline{\beta}_1 - \mathbf{R}_1 \xi - \overline{\mathbf{R}}_1 \overline{\eta} = \mathbf{T}_{2,1} \overline{\gamma}_2 + \mathbf{R}_1 \mathbf{S}_2 \rho_2 + \overline{\mathbf{T}}_{2,1} \rho_2 + \overline{\mathbf{R}}_1 \overline{\mathbf{S}}_2 \overline{\gamma}_2 \]
   and we also have that
   \[  \alpha_2 + \overline{\beta}_2 - \mathbf{R}_2 \xi - \overline{\mathbf{R}}_2 \overline{\eta} = -\rho_2 + \mathbf{T}_{2,2} \overline{\gamma}_2 + \mathbf{R}_2 \mathbf{S}_2 \rho_2 - \overline{\gamma}_2 + \overline{\mathbf{T}}_{2,2} \rho_2 + \overline{\mathbf{R}}_2 \overline{\mathbf{S}}_2 \overline{\gamma}_2. \]
   Weak compatibility now follows from the fact that the left and right sides of \eqref{eq:exact_same_boundary_values_one} and \eqref{eq:exact_same_boundary_values_two} are exact and have primitives with the same boundary values.
   
   To show compatibility, we repeat the computation of \eqref{eq:S_compatibility_temp_with_correction} with the indices switched, and without the $\omega$ term.   
   
   Now any other weakly compatible form is of the form $\alpha_2 + \overline{\beta}_2 + d\omega$ for some $d\omega \in \mathcal{A}_{\mathrm{harm}}(\riem_2)$. If this is compatible, we must have that $\mathbf{S}_1^{\mathrm{h}} d\omega=0$, and thus $d\omega$ is bridgeworthy by \cite[Proposition 3.37]{Schippers_Staubach_scattering_III}. 
  \end{proof}
  
  We have thus proven the characterization of compatibility promised in Section \ref{se:subsection_overfare_oneforms}. Though it is contained in the statement of Theorem \ref{th:compatibility_existence}, it deserves to be singled out.
  \begin{corollary} \label{co:exact_overfare_is_strongly_compatible}
   Assume that $\riem_2$ is connected. Then $\alpha_1 \in \mathcal{A}_{\mathrm{harm}}(\riem_1)$ and $\alpha_2 \in \mathcal{A}_{\mathrm{harm}}(\riem_2)$ are compatible with respect to $\zeta \in \mathcal{A}_{\mathrm{harm}}(\mathscr{R})$ if and only if 
   \[ \alpha_1 = \mathbf{O}^{\mathrm{e}}_{2,1} \left(\alpha_2 - \mathbf{R}_2^{\mathrm{h}} \zeta \right) + \mathbf{R}_1^{\mathrm{h}} \zeta.  \]
  \end{corollary}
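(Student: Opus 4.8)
The plan is to obtain the corollary as a direct repackaging of Theorem~\ref{th:compatibility_existence}(2), which already establishes both the existence and the uniqueness of the compatible form on $\riem_1$ under the assumption that $\riem_2$ is connected. All of the analytic content—the decomposition lemmas and the verification of compatibility condition (3) via the Schiffer identities carried out in \eqref{eq:S_compatibility_temp_with_correction}—is contained there. What remains is to verify the two implications of the biconditional and to reconcile the notation, since Theorem~\ref{th:compatibility_existence} is phrased in terms of the holomorphic/anti-holomorphic splitting $\alpha_1 + \overline{\beta}_1$ whereas the corollary refers to the full harmonic forms $\alpha_1,\alpha_2 \in \mathcal{A}_{\mathrm{harm}}$; setting $\alpha_1 + \overline{\beta}_1$ equal to the full form $\alpha_1$ (and likewise on $\riem_2$) passes harmlessly between the two.

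For the forward (``only if'') implication, I would first note that compatibility entails weak compatibility by Definition~\ref{de:strongly_compatible}, so condition (2) of Definition~\ref{de:weakly_compatible} gives $\alpha_2 - \mathbf{R}_2^{\mathrm{h}} \zeta \in \mathcal{A}^{\mathrm{e}}_{\mathrm{harm}}(\riem_2)$. This is exactly the hypothesis required to invoke Theorem~\ref{th:compatibility_existence}(2). The uniqueness clause of that theorem then forces
\[ \alpha_1 = \mathbf{O}^{\mathrm{e}}_{2,1}\left( \alpha_2 - \mathbf{R}_2^{\mathrm{h}} \zeta \right) + \mathbf{R}_1^{\mathrm{h}} \zeta, \]
since any form compatible with $\alpha_2$ with respect to $\zeta$ must coincide with the one produced by the theorem.

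For the reverse (``if'') implication, I would observe that the right-hand side of the displayed formula is well-posed precisely when $\alpha_2 - \mathbf{R}_2^{\mathrm{h}} \zeta$ lies in the domain of the exact overfare operator $\mathbf{O}^{\mathrm{e}}_{2,1}$, i.e.\ is exact; granting this, Theorem~\ref{th:compatibility_existence}(2) asserts verbatim that the resulting $\alpha_1$ is compatible with $\alpha_2$ with respect to $\zeta$. I do not anticipate any genuine obstacle here, as the substantive argument is already complete in the theorem. The only points requiring care are the notational passage just described and the observation that the exactness hypothesis needed to apply the theorem is, in both directions, supplied automatically by condition (2) of weak compatibility.
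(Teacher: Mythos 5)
Your proposal is correct and matches the paper's treatment: the authors state explicitly that the corollary ``is contained in the statement of Theorem \ref{th:compatibility_existence}'' part (2), with the forward direction following from its uniqueness clause and the reverse from its existence/construction clause, exactly as you argue. Your added remark that the exactness of $\alpha_2 - \mathbf{R}_2^{\mathrm{h}}\zeta$ (needed for $\mathbf{O}^{\mathrm{e}}_{2,1}$ to apply) is supplied by condition (2) of weak compatibility is a reasonable bit of bookkeeping that the paper leaves implicit.
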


  We also have two special cases worthy of attention.
  \begin{corollary}
   Assume that both $\riem_1$ and $\riem_2$ are connected. Given $\alpha_k \in \mathcal{A}_{\mathrm{harm}}(\riem_k)$ and $\zeta \in \mathcal{A}_{\mathrm{harm}}(\mathscr{R})$. The following are equivalent.
   \begin{enumerate}[label=(\arabic*),font=\upshape]
       \item $\alpha_k$ are compatible with respect to $\zeta$;
       \item $\alpha_1 = \mathbf{O}^{\mathrm{e}}_{2,1} \left( \alpha_2 - \mathbf{R}_2^{\mathrm{h}} \zeta \right) + \mathbf{R}_1^{\mathrm{h}} \zeta$;
       \item $\alpha_2 = \mathbf{O}^{\mathrm{e}}_{1,2} \left( \alpha_1 - \mathbf{R}_1^{\mathrm{h}} \zeta \right) + \mathbf{R}_2^{\mathrm{h}} \zeta$.
   \end{enumerate}
   In particular, given $\alpha_1$, there is a unique $\alpha_2$ which is compatible with $\alpha_1$ with respect to $\zeta$.  The same claim holds with the indices $1$ and $2$ interchanged.
  \end{corollary}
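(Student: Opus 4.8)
The plan is to derive the entire statement from the immediately preceding Corollary~\ref{co:exact_overfare_is_strongly_compatible} by exploiting one structural observation: that the relation ``$\alpha_1$ and $\alpha_2$ are compatible with respect to $\zeta$'' is symmetric under interchange of the indices $1$ and $2$. Once this symmetry is in hand, the three conditions are obtained by applying Corollary~\ref{co:exact_overfare_is_strongly_compatible} twice, once in each ordering, which is legitimate precisely because \emph{both} $\riem_1$ and $\riem_2$ are now assumed connected.

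First I would verify the symmetry of compatibility directly from the definitions. Condition~(2) of Definition~\ref{de:weakly_compatible}, namely $\alpha_k - \mathbf{R}^{\mathrm{h}}_k \zeta \in \mathcal{A}^{\mathrm{e}}_{\mathrm{harm}}(\riem_k)$ for $k=1,2$, is manifestly symmetric, and so is the third condition $\mathbf{S}_1^{\mathrm{h}}\alpha_1 + \mathbf{S}_2^{\mathrm{h}}\alpha_2 = \zeta$ of Definition~\ref{de:strongly_compatible}. The only requirement that is not symmetric on its face is condition~(1), $\mathbf{O}'(\partial_k\riem_2,\partial_k\riem_1)[\alpha_2]=[\alpha_1]$; but $\mathbf{O}'(\partial_k\riem_1,\partial_k\riem_2)$ and $\mathbf{O}'(\partial_k\riem_2,\partial_k\riem_1)$ are mutually inverse, being (up to sign) the Banach-space adjoints of the mutually inverse partial overfares on functions, so this condition is equivalent to $\mathbf{O}'(\partial_k\riem_1,\partial_k\riem_2)[\alpha_1]=[\alpha_2]$. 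Hence ``$\alpha_1,\alpha_2$ compatible with respect to $\zeta$'' and ``$\alpha_2,\alpha_1$ compatible with respect to $\zeta$'' mean exactly the same thing.

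With this in place the equivalences are immediate. Since $\riem_2$ is connected, Corollary~\ref{co:exact_overfare_is_strongly_compatible} applies verbatim and gives $(1)\Leftrightarrow(2)$. Since $\riem_1$ is \emph{also} connected, I apply the same corollary with the roles of $\riem_1$ and $\riem_2$ exchanged; using the symmetry from the previous step to identify the hypothesis ``$\alpha_2,\alpha_1$ compatible'' with condition~(1), this yields $(1)\Leftrightarrow(3)$. Combining the two gives the equivalence of $(1)$, $(2)$, and $(3)$. For the final ``in particular'' clause, fix $\alpha_1$ (with $\alpha_1-\mathbf{R}_1^{\mathrm{h}}\zeta$ exact, as is forced by compatibility). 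Formula~$(3)$ then defines a concrete $\alpha_2 = \mathbf{O}^{\mathrm{e}}_{1,2}(\alpha_1 - \mathbf{R}_1^{\mathrm{h}}\zeta) + \mathbf{R}_2^{\mathrm{h}}\zeta$; by $(3)\Rightarrow(1)$ it is compatible with $\alpha_1$, and by $(1)\Rightarrow(3)$ every compatible $\alpha_2$ satisfies the identical formula, giving uniqueness. The statement with indices interchanged is then the mirror image, again via the symmetry.

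I expect the only genuine point requiring care to be the verification in the second paragraph that condition~(1) of weak compatibility is symmetric, i.e.\ that the two partial overfares $\mathbf{O}'$ are mutual inverses; this rests on the invertibility of the partial overfare of functions (the relation $\mathbf{O}_{2,1}\mathbf{O}_{1,2}=\mathrm{Id}$) passing to the duals, and is the one step that is not purely formal. Everything else is a direct bookkeeping consequence of Corollary~\ref{co:exact_overfare_is_strongly_compatible} (and, for the existence half of the uniqueness clause, of Theorem~\ref{th:compatibility_existence}).
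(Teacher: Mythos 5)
Your proposal is correct and follows the route the paper clearly intends (the paper omits the proof as an immediate consequence of Corollary~\ref{co:exact_overfare_is_strongly_compatible}): the key observation that compatibility is a symmetric relation — with the only non-obvious point being that $\mathbf{O}'(\partial_k\riem_1,\partial_k\riem_2)$ and $\mathbf{O}'(\partial_k\riem_2,\partial_k\riem_1)$ are mutual inverses, which does follow from $\mathbf{O}_{2,1}\mathbf{O}_{1,2}=\mathrm{Id}$ passing to duals — lets you apply that corollary once in each ordering, since both pieces are now connected. The uniqueness clause and the index-swapped statement then follow exactly as you describe.
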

  
  \begin{corollary}  Assume that the separating complex of curves consists of a single curve. Given $\alpha_k \in \mathcal{A}_{\mathrm{harm}}(\riem_k)$ for $k=1,2$, they are weakly compatible with respect to $\zeta$ if and only if they are compatible with respect to $\zeta$.  
  \end{corollary}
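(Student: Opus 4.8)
The plan is to deduce the statement from the uniqueness theory already developed for compatible and weakly compatible triples, together with the fact that a single separating curve leaves no room for the harmonic-measure ambiguity. The implication ``compatible $\Rightarrow$ weakly compatible'' is immediate from Definition \ref{de:strongly_compatible}, so only the converse requires proof. First I would record the topological consequence of the hypothesis: a single separating quasicircle $\Gamma$ divides $\mathscr{R}$ into exactly two connected pieces $\riem_1$ and $\riem_2$, each having $\Gamma$ as its \emph{only} boundary curve, so that both are surfaces of type $(g_k,1)$ and Theorem \ref{th:compatibility_existence} is available in either direction.

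The key analytic fact I would isolate is that a connected surface with a single boundary curve carries no nontrivial harmonic measure. Indeed, by Definition \ref{def:harmonic measure} with $n=1$ the only harmonic measure is the harmonic function equal to $1$ on $\partial_1 \riem_k$, which by the maximum principle is the constant function $1$; its differential vanishes. Hence $\mathcal{A}_{\mathrm{hm}}(\riem_1)=\{0\}$, and likewise for $\riem_2$.

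With these in hand the argument is short. Suppose $(\alpha_1,\alpha_2)$ is weakly compatible with respect to $\zeta$. Since $\riem_2$ is connected, Theorem \ref{th:compatibility_existence} part (2) produces a unique form $\alpha_1'$ that is genuinely compatible with $\alpha_2$ with respect to $\zeta$, and $\alpha_1'$ is in particular weakly compatible. Comparing $\alpha_1$ and $\alpha_1'$: condition (1) of Definition \ref{de:weakly_compatible} gives $[\alpha_1]=\mathbf{O}'(\partial_1\riem_2,\partial_1\riem_1)[\alpha_2]=[\alpha_1']$, so $[\alpha_1-\alpha_1']=0$ in $\mathcal{H}'(\partial_1\riem_1)$, while condition (2) gives that $\alpha_1-\alpha_1'$ is exact. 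Thus $\alpha_1-\alpha_1'$ is an exact harmonic form whose primitive is constant on $\partial_1\riem_1$, i.e. an element of $\mathcal{A}_{\mathrm{hm}}(\riem_1)$. By the previous paragraph this space is trivial, so $\alpha_1=\alpha_1'$ and $\alpha_1$ is compatible. (Alternatively, one may quote Corollary \ref{co:exact_overfare_is_strongly_compatible} directly: its exact-overfare formula is weakly compatible, every weakly compatible form differs from it by an element of $\mathcal{A}_{\mathrm{hm}}(\riem_1)=\{0\}$, and hence equals it.)

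I do not anticipate a serious obstacle. The only point needing care is the identification of $\alpha_1-\alpha_1'$ as a harmonic measure and the vanishing of $\mathcal{A}_{\mathrm{hm}}(\riem_1)$ --- equivalently, the assertion that a Dirichlet-bounded harmonic function with constant CNT boundary values on a single boundary curve must be constant. This is precisely the uniqueness already guaranteed by Theorem \ref{thm:bounded_overfare_existence}, so it requires no new input.
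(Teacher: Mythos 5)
Your argument is correct and is essentially the route the paper intends (it states this corollary without proof as a consequence of Theorem \ref{th:compatibility_existence} and Corollary \ref{co:exact_overfare_is_strongly_compatible}): a single separating curve forces both pieces to be connected with one boundary curve each, so $\mathcal{A}_{\mathrm{hm}}(\riem_1)=\{0\}$ and the weakly compatible form is already the unique compatible one. The two details you flag --- identifying the difference with a harmonic measure via uniqueness of the CNT Dirichlet problem, and the vanishing of that space when $n=1$ --- are exactly the right ones and are handled correctly.
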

 \end{subsection}
 \begin{subsection}{Unitarity of the scattering matrix}
 
  \label{se:scattering_subsection}
 
 We now show that the scattering matrix of overfare is a unitary matrix whose blocks are Schiffer operators.  We divide this into cases $g \neq 0$ and $g=0$.  
 \begin{theorem}[Scattering matrix, $g \neq 0$]   \label{th:unitary_scattering_genus_nonzero}  Assume that the genus of $\mathscr{R}$ is non-zero, and that $\riem_2$ is connected.  
 
  Assume that $\alpha_1 + \overline{\beta}_1$ and $\alpha_2 + \overline{\beta}_2$ are compatible with respect to $\zeta = \xi + \overline{\eta} \in \mathcal{A}_{\mathrm{harm}}(\mathscr{R})$.  Then
  \begin{equation}  \label{eq:scattering_matrix}
     \left( \begin{array}{c} \overline{\beta}_1 \\ \overline{\beta}_2 \\ \xi \end{array} \right)
     = \left( \begin{array}{ccc} - \overline{\mathbf{T}}_{1,1} & - \overline{\mathbf{T}}_{2,1} & \overline{\mathbf{R}}_1 \\
      - \overline{\mathbf{T}}_{1,2} & - \overline{\mathbf{T}}_{2,2} & \overline{\mathbf{R}}_2 \\ \mathbf{S}_1 & \mathbf{S}_2 & 0 \end{array} \right) 
      \left( \begin{array}{c} {\alpha}_1 \\ {\alpha}_2 \\ \overline{\eta} \end{array} \right).
  \end{equation}
  This matrix is unitary.
 \end{theorem}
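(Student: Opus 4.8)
The statement contains two logically separate claims: the operator identity \eqref{eq:scattering_matrix} for an arbitrary compatible triple, and the unitarity of the $3\times 3$ block operator $U$ it displays, regarded as a bounded map $\mathcal{A}(\riem_1)\oplus\mathcal{A}(\riem_2)\oplus\overline{\mathcal{A}(\mathscr{R})}\to\overline{\mathcal{A}(\riem_1)}\oplus\overline{\mathcal{A}(\riem_2)}\oplus\mathcal{A}(\mathscr{R})$ (boundedness of each block being Theorem \ref{th:Schiffer_operators_bounded}). Unitarity is a statement about $U$ alone, so I would prove it first, independently of compatibility. The hypothesis $g\neq 0$ enters only to guarantee that $\mathcal{A}(\mathscr{R})\neq 0$, so that the third row and column are genuinely present; the genus-zero case collapses this block and is treated separately.

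For unitarity I would first assemble the adjoint identities of the Schiffer operators from Part I \cite{Schippers_Staubach_scattering_III}: the Bergman reproducing property identifies $\mathbf{S}_k^{*}$ with the restriction $\mathbf{R}_k$, while symmetry of the Schiffer kernel identifies $\overline{\mathbf{T}}_{j,k}^{*}$ with $\mathbf{T}_{k,j}$, and conjugating the first gives $\overline{\mathbf{R}}_k^{*}=\overline{\mathbf{S}}_k$. With these, $U^{*}$ is the block matrix whose entries are $-\mathbf{T}_{k,j}$, $\mathbf{R}_k$, $\overline{\mathbf{S}}_k$, and $0$. I would then expand $U^{*}U$ and $UU^{*}$ block by block and match each of the nine blocks against an identity of Part I: the $(3,3)$ block is the completeness relation $\mathbf{S}_1\mathbf{R}_1+\mathbf{S}_2\mathbf{R}_2=\mathrm{Id}$ of \cite[Theorem 4.1]{Schippers_Staubach_scattering_III}; the $(3,1)$ and $(3,2)$ blocks vanish by $\mathbf{S}_1\mathbf{T}_{1,1}+\mathbf{S}_2\mathbf{T}_{1,2}=0$ (\cite[Equation 3.17]{Schippers_Staubach_scattering_III}) and its index-swapped analogue; the $(1,3)$ and $(2,3)$ blocks vanish by the dual kernel relations $\mathbf{T}_{1,1}\overline{\mathbf{R}}_1+\mathbf{T}_{2,1}\overline{\mathbf{R}}_2=0$; and the upper-left $2\times 2$ blocks are exactly the generalized Grunsky identities of Part I, of the form $\mathbf{T}_{1,1}\overline{\mathbf{T}}_{1,1}+\mathbf{T}_{2,1}\overline{\mathbf{T}}_{1,2}+\mathbf{R}_1\mathbf{S}_1=\mathrm{Id}$ on the diagonal and the corresponding off-diagonal vanishing. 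The computation of $UU^{*}$ is symmetric, using the conjugate identities.

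For the operator identity \eqref{eq:scattering_matrix} on a fixed compatible triple, the third row is immediate: splitting the third compatibility condition $\mathbf{S}_1^{\mathrm{h}}(\alpha_1+\overline{\beta}_1)+\mathbf{S}_2^{\mathrm{h}}(\alpha_2+\overline{\beta}_2)=\xi+\overline{\eta}$ into holomorphic and anti-holomorphic parts yields $\mathbf{S}_1\alpha_1+\mathbf{S}_2\alpha_2=\xi$, together with $\overline{\mathbf{S}}_1\overline{\beta}_1+\overline{\mathbf{S}}_2\overline{\beta}_2=\overline{\eta}$, which I would retain as a consistency check. For the first two rows I would invoke Theorem \ref{th:compatibility_existence}(2) and Lemma \ref{le:decomposition_lemma}(2), which express $\alpha_1,\overline{\beta}_1,\alpha_2,\overline{\beta}_2$ through the auxiliary forms $\rho_1,\gamma_1,\tau_1,\sigma_1$ and a bridgeworthy $d\omega\in\mathcal{A}_{\mathrm{bw}}(\riem_2)$. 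Substituting these into the right-hand sides $-\overline{\mathbf{T}}_{1,k}\alpha_1-\overline{\mathbf{T}}_{2,k}\alpha_2+\overline{\mathbf{R}}_k\overline{\eta}$ and collapsing with the same Schiffer identities used for unitarity (together with $\mathbf{S}_1\nu_1+\mathbf{S}_1\tau_1=\xi$ from \eqref{eq:compatible_breakdown_B2}) should reproduce exactly the expressions for $\overline{\beta}_k$ in \eqref{eq:compatibility_theorem_overfared_2} and \eqref{eq:compatible_breakdown_C2}.

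The main obstacle is twofold. First is the bookkeeping: keeping the numerous $\mathbf{T}$, $\mathbf{S}$, $\mathbf{R}$ identities and their conjugates straight, and in particular verifying that the off-diagonal Grunsky blocks genuinely cancel rather than merely expecting them to. Second, and more delicate, is the bridgeworthy term $\partial\omega$ occurring in $\alpha_2$: it contributes $\overline{\mathbf{T}}_{2,1}\partial\omega$ to the first row, so consistency of \eqref{eq:scattering_matrix} forces this term to vanish. Extracting precisely which kernel/cohomology statement of Part I kills it — bridgeworthy forms lie in the relevant kernels, e.g. $\partial\omega\in\ker\mathbf{S}_2$ — and checking that this is compatible with the standing assumption that $\riem_2$ is connected, is where most of the care is required.
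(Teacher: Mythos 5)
Your proposal is correct and follows essentially the same route as the paper: the third row from compatibility condition (3) split into types, the first two rows by substituting the decompositions of Lemma \ref{le:decomposition_lemma} and Theorem \ref{th:compatibility_existence} and collapsing with the adjoint/composition identities of Part I, and unitarity as a direct block-by-block consequence of those same identities (the paper simply cites \cite[Theorems 3.21, 3.23, 3.24]{Schippers_Staubach_scattering_III} rather than writing out $U^{*}U$). You also correctly isolate the genuinely delicate point, the bridgeworthy term, which the paper dispatches via $\overline{\mathbf{T}}_{2,1}\partial\omega=0$ and $\overline{\mathbf{T}}_{2,2}\partial\omega=-\overline{\partial}\omega$ from \cite[Corollary 3.31]{Schippers_Staubach_scattering_III}.
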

 \begin{proof} 
  Unitarity follows from \cite[Theorems 3.21, 3.23, 3.24]{Schippers_Staubach_scattering_III}.  So it only remains to show that the matrix equation holds.  The bottom entry of the left and right hand side are equal by part (3) of the definition of compatibility, so we need only demonstrate that the other two entries are equal.
  
  We then have that both parts of Lemma \ref{le:decomposition_lemma} hold. For $k=1,2$ let $\gamma_k$, $\rho_k$, $\tau_k$, $\sigma_k$, $\mu_k$, and $\nu_k$ be as in Lemma \ref{le:decomposition_lemma}, so that   \eqref{eq:compatible_breakdown_A1}-\eqref{eq:compatible_breakdown_D1} and \eqref{eq:compatible_breakdown_A2}-\eqref{eq:compatible_breakdown_D2} hold. 
 
 Note that $\mathbf{S}_k \nu_k = \mathbf{S}_k \rho_k$ and $\mathbf{S}_k \mu_k = \mathbf{S}_k \gamma_k$ for $k=1,2$, 
 since $\overline{\delta}_k \in [\overline{\mathbf{R}}_k \overline{\mathcal{A}(\mathscr{R})}]^\perp$ and 
 the integral kernel of $\mathbf{S}_k$ is in $\overline{\mathcal{A}(\mathscr{R})}$. Similarly  $\overline{\mathbf{S}}_k \overline{\mu}_k = \overline{\mathbf{S}}_k \overline{\gamma}_k$.

 Next, applying $\mathbf{T}_{1,1}$ to the first equation of  \eqref{eq:compatibility_theorem_overfared_2}, and inserting the second, we obtain
 \begin{align*}
     \overline{\mathbf{T}}_{1,1} \alpha_1 & = - \overline{\mathbf{T}}_{1,1} \rho_1 + \overline{\mathbf{T}}_{1,1} \mathbf{T}_{1,1} \overline{\gamma_1}  + \overline{\mathbf{T}}_{1,1} \mathbf{R}_1 \mathbf{S}_1 \tau_1 \\ 
      & = - \overline{\beta}_1 - \overline{\gamma_1} + \overline{\mathbf{R}}_1 \overline{\mathbf{S}}_1 \overline{\sigma}_1 + \overline{\mathbf{T}}_{1,1} 
      \mathbf{T}_{1,1} \overline{\gamma_1} + \overline{\mathbf{T}}_{1,1} 
      \mathbf{R}_1 \mathbf{S}_1 \tau_1.
 \end{align*}
 Now applying the first identity of \cite[Theorem 3.23]{Schippers_Staubach_scattering_III} and the first line of \cite[Theorem 3.24]{Schippers_Staubach_scattering_III} in that order, we obtain 
 \begin{align*}
     \overline{\mathbf{T}}_{1,1} \alpha_1 & = - \overline{\beta_1} + \overline{\mathbf{R}}_{1} \overline{\mathbf{S}}_1 \overline{\sigma_1} - \overline{\mathbf{T}}_{2,1} \mathbf{T}_{1,2} \overline{\gamma_1} - \overline{\mathbf{R}}_1 \overline{\mathbf{S}}_1 \overline{\gamma_1} 
     + \overline{\mathbf{T}}_{1,1} \mathbf{R}_1 \mathbf{S}_1 \tau_1 & \\ 
       & = - \overline{\beta}_1 + \overline{\mathbf{R}}_1 \overline{\mathbf{S}}_1 \overline{\sigma_1} - \overline{\mathbf{T}}_{2,1} \mathbf{T}_{1,2} \overline{\gamma_1} - \overline{\mathbf{R}}_{1} \overline{\mathbf{S}}_1 \overline{\gamma_1} - \overline{\mathbf{T}}_{2,1} \mathbf{R}_2 \mathbf{S}_1 \tau_1 & \\
       & =  - \overline{\beta}_1 - \overline{\mathbf{T}}_{2,1} \alpha_2 + \overline{\mathbf{T}}_{2,1} \partial \omega +
       \overline{\mathbf{R}}_1 \overline{\mathbf{S}}_1 \overline{\sigma}_1 + \overline{\mathbf{R}}_1 \overline{\mathbf{S}}_1  \overline{\mu}_1 & {\text{\scriptsize Eqn \eqref{eq:compatible_breakdown_C2},\eqref{eq:compatible_breakdown_D2}, \cite[Theorem 4.1]{Schippers_Staubach_scattering_III}  }} \\
       & =  - \overline{\beta}_1 - \overline{\mathbf{T}}_{2,1} \alpha_2 + \overline{\mathbf{R}}_1 \overline{\eta} + \overline{\mathbf{T}}_{21} \partial \omega & {\text{\scriptsize Eqn \eqref{eq:compatible_breakdown_B2}}}.
 \end{align*}
 Rearranging and using the fact that $\overline{\mathbf{T}}_{2,1} \partial \omega=0$ by \cite[Corollary 3.31]{Schippers_Staubach_scattering_III}, we get
 \begin{equation} \label{eq:full_unitary_good2}
     \overline{\beta}_1 = - \overline{\mathbf{T}}_{1,1} \alpha_1  - \overline{\mathbf{T}}_{2,1} \alpha_2 + \overline{\mathbf{R}}_1 \overline{\eta}  
 \end{equation}
 as desired.
 
 The proof of the remaining equation is similar, but there are small differences arising from the asymmetry of the assumptions. Applying $\mathbf{T}_{2,2}$ to the first equation of  \eqref{eq:compatibility_theorem_overfared_1}, and inserting the second, we obtain
  \begin{align*}
     \overline{\mathbf{T}}_{2,2} \alpha_1 & = - \overline{\mathbf{T}}_{2,2} \rho_2 + \overline{\mathbf{T}}_{2,2} \mathbf{T}_{2,2} \overline{\gamma_2}  + \overline{\mathbf{T}}_{2,2} \mathbf{R}_2 \mathbf{S}_2 \tau_2 + \overline{\mathbf{T}}_{2,2} \partial \omega\\ 
      & = - \overline{\beta}_2 - \overline{\gamma_2} + \overline{\mathbf{R}}_2 \overline{\mathbf{S}}_2 \overline{\sigma}_2 + \overline{\mathbf{T}}_{2,2} 
      \mathbf{T}_{2,2} \overline{\gamma_2} + \overline{\mathbf{T}}_{2,2} 
      \mathbf{R}_2 \mathbf{S}_2 \tau_2
 \end{align*}
 where in the last equality we have used the fact that $\overline{\mathbf{T}}_{2,2} \partial \omega = - \overline{\partial} \omega$ by \cite[Corollary 3.31]{Schippers_Staubach_scattering_III}. 
  Now as above, applying the second identity of \cite[Theorem 3.23]{Schippers_Staubach_scattering_III} and the second line of \cite[Theorem 3.24]{Schippers_Staubach_scattering_III}  we obtain 
 \begin{align*}
     \overline{\mathbf{T}}_{2,2} \alpha_1 
       & = - \overline{\beta}_2 + \overline{\mathbf{R}}_2 \overline{\mathbf{S}}_2 \overline{\sigma_2} - \overline{\mathbf{T}}_{1,2} \mathbf{T}_{2,1} \overline{\gamma_2} - \overline{\mathbf{R}}_{2} \overline{\mathbf{S}}_2 \overline{\gamma_2} - \overline{\mathbf{T}}_{1,2} \mathbf{R}_1 \mathbf{S}_2 \tau_2 & \\
       & =  - \overline{\beta}_2 - \overline{\mathbf{T}}_{1,2} \alpha_1 +
       \overline{\mathbf{R}}_2 \overline{\mathbf{S}}_2 \overline{\sigma}_2 + \overline{\mathbf{R}}_2 \overline{\mathbf{S}}_2  \overline{\mu}_2 & {\text{\scriptsize Eqn \eqref{eq:compatible_breakdown_C1}, \cite[Theorem 4.1]{Schippers_Staubach_scattering_III}  }} \\
       & =  - \overline{\beta}_2 - \overline{\mathbf{T}}_{1,2} \alpha_1 + \overline{\mathbf{R}}_2 \overline{\eta}  & {\text{\scriptsize Eqn \eqref{eq:compatible_breakdown_B1},\eqref{eq:compatible_breakdown_D1}}}.
 \end{align*}
 This completes the proof.
 \end{proof}

 In the genus zero case we have the following.
 \begin{theorem}[Scattering matrix genus zero]  \label{th:unitary_scattering_genus_zero} Assume that $g=0$ and $\riem_2$ is connected. and let $\alpha_k + \overline{\beta}_k \in \mathcal{A}_{\mathrm{harm}}(\riem_k)$.  Assume that $\alpha_1 + \overline{\beta}_1 = \mathbf{O}^{\mathrm e}(\alpha_2 + \overline{\beta_2})$.
  Then $\alpha_k$ and $\beta_k$ satisfy
   \begin{equation}  \label{eq:scattering_matrix_genus_zero}
     \left( \begin{array}{c} \overline{\beta}_1 \\ \overline{\beta}_2   \end{array} \right)
     = \left( \begin{array}{cc} {-} \overline{\mathbf{T}}_{1,1} &  { -}\overline{\mathbf{T}}_{2,1}  \\
       {-}\overline{\mathbf{T}}_{1,2} &  { -}\overline{\mathbf{T}}_{2,2}    \end{array} \right) 
      \left( \begin{array}{c} {\alpha}_1 \\ {\alpha}_2  \end{array} \right).
  \end{equation}
  and the matrix is unitary.
 \end{theorem}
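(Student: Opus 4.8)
The plan is to identify this statement as the genus-zero degeneration of Theorem \ref{th:unitary_scattering_genus_nonzero} and to verify that the degeneration is clean. The starting observation is structural: a compact Riemann surface of genus zero carries no nonzero holomorphic or anti-holomorphic one-forms, so $\mathcal{A}(\mathscr{R}) = \overline{\mathcal{A}(\mathscr{R})} = \{0\}$ and hence $\mathcal{A}_{\mathrm{harm}}(\mathscr{R}) = \{0\}$. Two consequences follow immediately. First, any catalyzing form $\zeta = \xi + \overline{\eta}$ must vanish, so $\xi = \overline{\eta} = 0$. Second, the operator $\mathbf{S}_k \colon \mathcal{A}(\riem_k) \to \mathcal{A}(\mathscr{R})$ has trivial codomain and the operator $\overline{\mathbf{R}}_k$ has trivial domain $\overline{\mathcal{A}(\mathscr{R})}$, so both are the zero map. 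In particular the third row and third column of the matrix in \eqref{eq:scattering_matrix} act on, or into, the trivial spaces $\mathcal{A}(\mathscr{R})$ and $\overline{\mathcal{A}(\mathscr{R})}$ respectively.

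Next I would translate the hypothesis into the language of compatible triples. Since $\riem_2$ is connected and $\zeta = 0$, Corollary \ref{co:exact_overfare_is_strongly_compatible} shows that the assumption $\alpha_1 + \overline{\beta}_1 = \mathbf{O}^{\mathrm{e}}(\alpha_2 + \overline{\beta}_2)$ is exactly the assertion that $(\alpha_1 + \overline{\beta}_1, \alpha_2 + \overline{\beta}_2, 0)$ is a compatible triple. I can then run the computation in the proof of Theorem \ref{th:unitary_scattering_genus_nonzero} verbatim: both Lemma \ref{le:decomposition_lemma} and Theorem \ref{th:compatibility_existence} assume only that $\riem_2$ is connected, never that the genus is nonzero, so the decompositions \eqref{eq:compatibility_theorem_overfared_1} and \eqref{eq:compatibility_theorem_overfared_2} are available with $\xi = \overline{\eta} = 0$. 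The two derived identities are $\overline{\beta}_1 = -\overline{\mathbf{T}}_{1,1}\alpha_1 - \overline{\mathbf{T}}_{2,1}\alpha_2 + \overline{\mathbf{R}}_1 \overline{\eta}$, which is \eqref{eq:full_unitary_good2}, and the analogous $\overline{\beta}_2 = -\overline{\mathbf{T}}_{1,2}\alpha_1 - \overline{\mathbf{T}}_{2,2}\alpha_2 + \overline{\mathbf{R}}_2 \overline{\eta}$. Since $\overline{\eta} = 0$ the final term drops in each, producing exactly \eqref{eq:scattering_matrix_genus_zero}; the third scalar equation $\xi = \mathbf{S}_1 \alpha_1 + \mathbf{S}_2 \alpha_2$ collapses to the trivial $0 = 0$.

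For unitarity I would argue that, with $\mathbf{S}_k = 0$ and $\overline{\mathbf{R}}_k = 0$, the operator in \eqref{eq:scattering_matrix} is the orthogonal direct sum of the $2\times2$ block of $\overline{\mathbf{T}}$-operators with the zero map on the trivial summands $\overline{\mathcal{A}(\mathscr{R})}$ and $\mathcal{A}(\mathscr{R})$. Consequently unitarity of the $3\times3$ matrix and unitarity of the $2\times2$ matrix in \eqref{eq:scattering_matrix_genus_zero} are one and the same statement, and the nine block relations $M^*M = MM^* = \mathrm{Id}$ reduce to the four relations among the $\overline{\mathbf{T}}_{j,k}$, each obtained from the corresponding genus-nonzero relation by deleting the now-vanishing $\mathbf{S}_k^* \mathbf{S}_k$ and $\overline{\mathbf{R}}_k^* \overline{\mathbf{R}}_k$ contributions. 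These surviving relations are precisely the content of \cite[Theorems 3.21, 3.23, 3.24]{Schippers_Staubach_scattering_III}, which carry no genus restriction.

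The step I expect to require the most care is confirming that nothing imported from the proof of Theorem \ref{th:unitary_scattering_genus_nonzero} secretly uses $g \neq 0$; one cannot simply cite that theorem, since it is stated under the hypothesis of nonzero genus. Concretely, one must check that the cited Part I results (Theorems 3.21, 3.23, 3.24, 4.1 and Corollary 3.31) and the decomposition lemmas hold for genus zero, and that the bridgeworthy term $\partial\omega \in \mathcal{A}_{\mathrm{bw}}(\riem_2)$ appearing in \eqref{eq:compatibility_theorem_overfared_1} is absorbed exactly as before, via $\overline{\mathbf{T}}_{2,1}\partial\omega = 0$ and $\overline{\mathbf{T}}_{2,2}\partial\omega = -\overline{\partial}\omega$ from \cite[Corollary 3.31]{Schippers_Staubach_scattering_III}. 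Since these identities are genus-independent, the degeneration is clean, and both the $2\times2$ form and its unitarity follow.
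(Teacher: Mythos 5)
Your proposal is correct and takes essentially the same route as the paper: the published proof is a single sentence instructing the reader to set all elements of $\mathcal{A}_{\mathrm{harm}}(\mathscr{R})$ to zero in the proof of Theorem \ref{th:unitary_scattering_genus_nonzero}, which is precisely the degeneration you carry out (including the translation of the hypothesis into a compatible triple with $\zeta=0$ via Corollary \ref{co:exact_overfare_is_strongly_compatible}). Your version is simply a more careful spelling-out of the same argument.
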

 \begin{proof} 
   One obtains the much simpler proof by setting all elements of $\mathcal{A}_{\text{harm}}(\mathscr{R})$ to zero in the proof of Theorem \ref{th:unitary_scattering_genus_nonzero}. 
 \end{proof}
 
 We conclude this section with some observations on the action of the scattering matrix on harmonic measures. {These can be viewed as symmetries of the scattering process.}
 
 Fix $\alpha_k+ \overline{\beta}_k \in \mathcal{A}_{\mathrm{harm}}(\riem_k)$, $k=1,2$, are compatible with respect to $\zeta = \xi + \overline{\eta} \in \mathcal{A}_{\mathrm{harm}}(\mathscr{R})$. By Theorem  \ref{th:compatibility_existence} part (1), if $\alpha_2'+\overline{\beta}_2' \in \mathcal{A}_{\mathrm{harm}}(\riem_2)$ is another compatible form we have that 
 \begin{align*}
   \alpha_2' & = \alpha_2 + \partial \omega \\
   \overline{\beta}_2' & = \overline{\beta}_2 + \overline{\partial} \omega 
 \end{align*} 
 for some bridgeworthy form $d\omega \in \mathcal{A}_{\mathrm{bw}}(\riem_2)$. This is reflected by the matrix equation 
  \begin{equation}  \label{eq:scattering_matrix_on_bridgeworthy}
     \left( \begin{array}{c} 0  \\ \overline{\partial}  \omega \\ 0 \end{array} \right)
     = \left( \begin{array}{ccc} - \overline{\mathbf{T}}_{1,1} & - \overline{\mathbf{T}}_{2,1} & \overline{\mathbf{R}}_1 \\
      - \overline{\mathbf{T}}_{1,2} & - \overline{\mathbf{T}}_{2,2} & \overline{\mathbf{R}}_2 \\ \mathbf{S}_1 & \mathbf{S}_2 & 0 \end{array} \right) 
      \left( \begin{array}{c} 0  \\ \partial \omega \\ 0 \end{array} \right)
  \end{equation}
  which follows from \cite[Corollary 3.31]{Schippers_Staubach_scattering_III}. 
 
  By Proposition \ref{pr:perturb_both_by_harmonic_measure} if we fix $\zeta$ and simultaneously perturb the other two forms by a harmonic measure, the resulting forms are still compatible.  That is, if $\omega_k \in \mathcal{A}_{\mathrm{hm}}(\riem_k)$ for $k=1,2$ satisfy $\mathbf{O}_{1,2} \omega_1=\omega_2$ then 
  \begin{align*}
      \alpha_1' + \overline{\beta}_1' & = \alpha_1 + \overline{\beta}_1 + d\omega_1 \\
      \alpha_2' + \overline{\beta}_2' & = \alpha_2 + \overline{\beta}_2 + d\omega_2 
  \end{align*}
  are also compatible with respect to $\zeta$. This is in turn reflected in the following matrix equation:
  \begin{equation}  \label{eq:scattering_matrix_on_harmonic_measures}
     \left( \begin{array}{c} \overline{\partial} \omega_1 \\ \overline{\partial} \omega_2 \\ 0 \end{array} \right)
     = \left( \begin{array}{ccc} - \overline{\mathbf{T}}_{1,1} & - \overline{\mathbf{T}}_{2,1} & \overline{\mathbf{R}}_1 \\
      - \overline{\mathbf{T}}_{1,2} & - \overline{\mathbf{T}}_{2,2} & \overline{\mathbf{R}}_2 \\ \mathbf{S}_1 & \mathbf{S}_2 & 0 \end{array} \right) 
      \left( \begin{array}{c} \partial \omega_1 \\ \partial \omega_2 \\ 0 \end{array} \right)
  \end{equation}
  which follows from \cite[Theorems 3.25, 3.30]{Schippers_Staubach_scattering_III}.
 \end{subsection}
 \begin{subsection}{Analogies with classical potential scattering} \label{se:scattering_analogies}
  
  In this section we describe the analogy between the scattering matrix in Theorems \ref{th:unitary_scattering_genus_nonzero} and \ref{th:unitary_scattering_genus_zero} and scattering by a potential. We will restrict to the genus zero case in the former, and compare it to scattering by a potential well in one dimension.
  
  Let \(a(x, D)=\)
\(D^{2}+V(x)\) denote the $1$-dimensional Schr\"odinger operator, with \(D=-i \partial_{x}\), where the
potential \(V(x)\) is smooth and goes to 0 sufficiently fast as \(x\) goes to infinity. Because of this decay assumption at infinity, for $\lambda \in \R$ the solutions of the stationary Schr\"odinger equation

$$
a(x,D) u=\lambda^2 u,
$$
should behave like
$$
a_{+}^{l, r}(\lambda) \mathrm{e}^{i \lambda x }+a_{-}^{l, r}(\lambda) \mathrm{e}^{-i \lambda x },
$$
as $x \rightarrow \pm \infty$, where $l$ and $r$ stand for left and right and correspond respectively to $x \rightarrow-\infty$
and $x \rightarrow+\infty$. The so-called {\it{Jost solutions}} $\mathscr{J}_{\pm}^{l, r}$ are the solutions which behave exactly
as $\mathrm{e}^{i \lambda x}$ or $\mathrm{e}^{-i \lambda x}$ as $x \rightarrow \pm \infty$ ($e^{\pm i\lambda x}$ are obviously solutions to the equation $D^2 u= \lambda^2 u$, i.e. the original equation without any potential).\\
Now the scattering problem amounts to finding the components of a solution $u$ of the Schr\"odinger equation in
the basis $\left(\mathscr{J}_{+}^{r}, \mathscr{J}_{-}^{l}\right)$ of the outgoing Jost solutions, if one knows the components of $u$ in the basis
$\left(\mathscr{J}_{+}^{l}, \mathscr{J}_{-}^{r}\right)$ of the incoming Jost solutions.\\
In scattering theory and quantum mechanics,
the $2\times 2$ matrix 
$$\mathbb{S}(\lambda )=\left(\begin{array}{cc}s_{11} & s_{12} \\ s_{21} & s_{22}\end{array}\right)$$
that relates these components
is called the \emph{scattering matrix}. It also turns out that if the potential $V$ is real on
the real axis, then $\overline{\mathscr{J}_{+}^{l, r}}= \mathscr{J}_{-}^{l, r}$ and the scattering matrix
$\mathbb{S}(\lambda)$ is unitary. 

Turning to scattering in quasicircles, let $\riem_1$ and $\riem_2$ be identified with domains in the Riemann sphere $\sphere$. By M\"obius invariance we can assume without loss of generality that $\riem_2$ contains the point at $\infty$ whilst $\riem_1$ contains $0$.  The punctured plane is conformally a cylinder, with the points at $0$ and $\infty$ infinitely far away, with the quasicircle separating $0$ from $\infty$. 
We then identify left with $0$ and right with $\infty$. The quasicircle can be thought of as a potential well with a possibly highly irregular support set, whose Hausdorff dimension is in $[1,2)$.

The problem then is, given left moving solutions (harmonic forms in $\riem_1$) find right moving solutions (harmonic forms in $\riem_2$) which overfare through the potential well. The function behaves harmonically as $z \rightarrow 0/\infty$, but not across the potential well. The holomorphic forms are identified with solutions to the harmonic scattering problem  with expansions of the form 
\[  \alpha_k = \left( \sum_{k=1}^\infty \alpha_k^n z^n  \right) dz  \]
and the anti-holomorphic forms are identified with solutions to the harmonic scattering problem with  expansions of the form 
\[  \overline{\beta}_k = \left( \sum_{k=1}^\infty \beta^k_n z^{-n}  \right) dz.  \]
Thus we identify $\pm$ with holomorphic/anti-holomorphic respectively, and 
\[ \overline{\mathscr{J}_{+}^{l}} \sim \alpha_1, \ \ \  \overline{\mathscr{J}_{-}^{l}} \sim \overline{\beta}_1, \ \ \  \mathscr{J}_{+}^{r} \sim \alpha_2, \ \ \ \mathscr{J}_{-}^{r} \sim \overline{\beta}_2.  \]

 \end{subsection}
\end{section}
\begin{section}{The period mapping}  \label{se:period_mapping}
\begin{subsection}{Assumptions throughout this section}   
  
  \label{se:period_mapping_assumptions}
  The following assumptions will be in force throughout Section \ref{se:period_mapping}.   
 \begin{enumerate}
     \item $\mathscr{R}$ is a compact Riemann surface of genus $g$, with $n$ punctures $p_1,\ldots,p_n$;
     \item $\Gamma = \Gamma_1 \cup \cdots \cup \Gamma_n$ is a collection of quasicircles;
     \item $\Gamma$ separates $\mathscr{R}$ into $\riem_1$ and $\riem_2$ in the sense of Definition \ref{de:separating_complex};
     \item $\riem_2$ is connected; 
     \item $\riem_1 = \Omega_1 \cup \cdots \cup \Omega_n$ where $\Omega_1,\ldots,\Omega_n$ are simply-connected sets with disjoint closures;
     \item $p_k \in \Omega_k$ for $k=1,\ldots,n$.   
 \end{enumerate} 
  We refer to the domains $\Omega_k$ as ``caps''.
 $\Gamma$ is always given the positive orientation with respect to $\riem_1$.   
 
     In this section we generalize the classical period mapping for compact surfaces to surfaces with border. This new period mapping relates both to the cohomology of the set of holomorphic one-forms on the compact surface $\mathscr{R}$, and to the structure of the set of boundary values of holomorphic one-forms on $\riem_2$. Thus it unifies both the classical polarization induced by the holomorphic one-forms on the compact surface (relating cohomology to complex structure) with the period maps of genus zero surfaces with boundary studied by various authors, including the Kirillov-Yuri'ev-Nag-Sullivan period map of the Teichm\"uller space of the disk \cite{KY2}, \cite{NS}, \cite{Takhtajan_Teo_Memoirs}, \cite{RSS_genus_zero},    
 
 In Section \ref{se:period_map_upsilon} we define a canonical isomorphism parametrizing the set of holomorphic one-forms on a surface $\riem_2$ of genus $g$ with $n$ boundary curves. We then use this to define a natural polarization and a map whose graph is the set of holomorphic one-forms on $\riem_2$.  In Section \ref{se:KYNS_period} we show how this generalizes both the classical and KYNS period mappings, and relate it to the Grunsky inequalities. Finally, in Section \ref{se:holomorphic_BVP} we use the machinery of the previous sections to give a reduction of the boundary value problem for holomorphic one-forms with $H^{-1/2}$ data to a non-singular integral equation on the $n$-fold direct sum of the Bergman space of the disk.
\end{subsection}
\begin{subsection}{The generalized period map}  \label{se:period_map_upsilon}

    Consider the space
    \[   \mathcal{A}_{\mathrm{harm}}(\riem_1) \oplus \mathcal{A}_{\mathrm{harm}}(\mathscr{R}).   \]
   
    We have the two projections  
   \[ \gls{pcap} = \mathbf{P}_{\riem_1} \oplus  {\overline{\mathbf{P}}}_{\mathscr{R}} : \mathcal{A}_{\mathrm{harm}}(\riem_1) \oplus \mathcal{A}_{\mathrm{harm}}(\mathscr{R}) \rightarrow \mathcal{A}(\riem_1) \oplus \overline{\mathcal{A}(\mathscr{R})}    \]
   and 
   \[   \overline{\mathbf{P}}_{\mathrm{cap}} = \overline{\mathbf{P}}_{\riem_1} \oplus  {\mathbf{P}}_{\mathscr{R}} : \mathcal{A}_{\mathrm{harm}}(\riem_1) \oplus \mathcal{A}_{\mathrm{harm}}(\mathscr{R}) \rightarrow \overline{\mathcal{A}(\riem_1)} \oplus {\mathcal{A}(\mathscr{R})}   \]
   where $\mathbf{P}_{\riem_1}$ was defined in \eqref{eq:hol_antihol_projections_Bergman}. Also   
   \[   {\mathbf{P}}_{\mathscr{R}}: \mathcal{A}_{\mathrm{harm}}(\mathscr{R}) \rightarrow \mathcal{A}(\mathscr{R})   \]
   is the projection onto the holomorphic part, and similarly $ {\overline{\mathbf{P}}}_{\mathscr{R}}$ is the projection onto the anti-holomorphic part.  The projections are obviously bounded.

   We define the following operator, which we will shortly show is an isomorphism.    
   \begin{align}\label{defn: captial theta}
       \gls{theta}: \overline{\mathcal{A}(\riem_1)} \oplus \mathcal{A}(\mathscr{R}) & \rightarrow \mathcal{A}^{\mathrm{se}}(\riem_2)  \\
      \nonumber (\overline{\gamma},\tau) & \mapsto - \mathbf{T}_{1,2} \overline{\gamma} +  {\mathbf{R}}_2 \tau.  
   \end{align}
   This is bounded because $\mathbf{T}_{1,2}$ and $\mathbf{R}_2$ are.

  We define an augmented overfare operator which contains the extra data of the cohomology class. This will be a factor of the inverse of $\Theta$. First, observe that given $\beta \in \mathcal{A}^{\mathrm{se}}_{\text{harm}}(\riem_2)$, there is a unique one-form $\sigma \in \mathcal{A}(\mathscr{R})$ whose restriction $\mathbf{R}^\mathrm{h}_2 \sigma$ is in the same cohomology class as $\beta$.  Thus $\alpha \in \mathcal{A}(\riem_1)$ and $\beta \in \mathcal{A}(\riem_2)$ can be compatible only via the form $\sigma$.  Also, since $\riem_2$ is connected, the exact overfare is well-defined.  Thus, given $\beta$, we have the following uniquely determined compatible form $\hat{\mathbf{O}} \beta \in \mathcal{A}_{\text{harm}}(\riem_1)$:
    \begin{align*}
       \gls{ohat}: \mathcal{A}^{\mathrm{se}}_{\mathrm{harm}}(\riem_2) & \rightarrow \mathcal{A}_{\mathrm{harm}}(\riem_1)  \\
      \beta & \mapsto \mathbf{O}^{\mathrm{e}}(\beta -  {\mathbf{R}}_2^{\mathrm{h}} \sigma) +
        {\mathbf{R}}_1^{\mathrm{h}}\sigma  
    \end{align*}
    where $\sigma$ is the unique element of 
    $\mathcal{A}_{\mathrm{harm}}(\mathscr{R})$ such that $\beta-  {\mathbf{R}}_2^{\mathrm{h}} \sigma$ is exact.  

 Using this, we define the augmented overfare map
 \begin{align}   \label{eq:Oaugmented_definition}  
   \gls{augo}: \mathcal{A}^{\mathrm{se}}_{\mathrm{harm}}(\riem_2) & \rightarrow \mathcal{A}_{\mathrm{harm}}(\riem_1) \oplus \mathcal{A}_{\mathrm{harm}}(\mathscr{R})  \nonumber \\
   \beta   & \mapsto (  \hat{\mathbf{O}} \beta, \sigma )
 \end{align}
 where $\sigma$ is the unique element of $\mathcal{A}_{\mathrm{harm}}(\mathscr{R})$ such that $\beta -  {\mathbf{R}}^{\mathrm{h}}_2  \sigma$ is exact.

   \begin{theorem}   \label{th:Theta_isomorphism} $\Theta$ is an isomorphism with inverse $\overline{\mathbf{P}}_{\mathrm{cap}} \mathbf{O}^{\mathrm{aug}}$.  
   \end{theorem}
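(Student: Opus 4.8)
The plan is to prove the two composites $\Theta \circ \overline{\mathbf{P}}_{\mathrm{cap}}\mathbf{O}^{\mathrm{aug}}$ and $\overline{\mathbf{P}}_{\mathrm{cap}}\mathbf{O}^{\mathrm{aug}} \circ \Theta$ are the identities on $\mathcal{A}^{\mathrm{se}}(\riem_2)$ and on $\overline{\mathcal{A}(\riem_1)} \oplus \mathcal{A}(\mathscr{R})$, respectively. The conceptual engine is the unitary scattering matrix of Theorem \ref{th:unitary_scattering_genus_nonzero} (and Theorem \ref{th:unitary_scattering_genus_zero} when $g=0$), combined with the compatibility machinery of Theorem \ref{th:compatibility_existence}: by construction $\mathbf{O}^{\mathrm{aug}}$ sends $\theta$ to the data $(\hat{\mathbf{O}}\theta,\sigma)$ of the \emph{unique compatible triple} $(\hat{\mathbf{O}}\theta,\theta,\sigma)$, so these triples may be fed directly into the scattering relation.

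First I would treat $\Theta \circ \overline{\mathbf{P}}_{\mathrm{cap}}\mathbf{O}^{\mathrm{aug}} = \mathrm{Id}$. Fix $\theta \in \mathcal{A}^{\mathrm{se}}(\riem_2)$, which is holomorphic, and write $\mathbf{O}^{\mathrm{aug}}\theta = (\alpha_1 + \overline{\beta}_1,\, \sigma)$ with $\sigma = \xi + \overline{\eta}$, so that $\alpha_2 = \theta$ and $\overline{\beta}_2 = 0$ form a compatible triple. Since the scattering matrix $S$ of Theorem \ref{th:unitary_scattering_genus_nonzero} is unitary, $S^{-1} = S^{*}$, and the middle row of $S^{*}$ expresses $\alpha_2$ in terms of $(\overline{\beta}_1,\overline{\beta}_2,\xi)$. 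Setting $\overline{\beta}_2 = 0$ and invoking the adjoint identities of Part I \cite{Schippers_Staubach_scattering_III} (Theorems 3.21, 3.23, 3.24 there), which in the sign convention of the scattering matrix give $\overline{\mathbf{T}}_{2,1}^{*} = \mathbf{T}_{1,2}$ and $\mathbf{S}_2^{*} = \mathbf{R}_2$, the relevant entries of $S^{*}$ are exactly $\mathbf{T}_{1,2}$ and $\mathbf{R}_2$, so that
\[ \theta = \alpha_2 = -\mathbf{T}_{1,2}\overline{\beta}_1 + \mathbf{R}_2 \xi. \]
Since $\overline{\mathbf{P}}_{\mathrm{cap}}\mathbf{O}^{\mathrm{aug}}\theta = (\overline{\beta}_1,\xi)$, the right-hand side is precisely $\Theta(\overline{\beta}_1,\xi)$, proving this composite is the identity. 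In particular $\Theta$ is surjective.

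Next I would establish $\overline{\mathbf{P}}_{\mathrm{cap}}\mathbf{O}^{\mathrm{aug}} \circ \Theta = \mathrm{Id}$. Given the previous paragraph this amounts to showing $\Theta$ is injective, since if $\Theta(\overline{\gamma},\tau) = \theta$ then $\Theta(\overline{\beta}_1 - \overline{\gamma}, \xi - \tau) = 0$ where $(\overline{\beta}_1,\xi) = \overline{\mathbf{P}}_{\mathrm{cap}}\mathbf{O}^{\mathrm{aug}}\theta$. Here the cap hypotheses of Section \ref{se:period_mapping_assumptions} enter decisively. First, each $\Gamma_k$ bounds the simply connected cap $\Omega_k$ and is therefore null-homologous, so the relevant period of the Schiffer kernel vanishes and $\mathbf{T}_{1,2}\overline{\gamma}$ is semi-exact; this confirms $\Theta$ lands in $\mathcal{A}^{\mathrm{se}}(\riem_2)$. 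Second, because each component of $\riem_1$ has a single boundary curve, the bridgeworthy forms $\mathcal{A}_{\mathrm{bw}}(\riem_2)$ reduce to forms locally constant on the boundary, and semi-exactness together with the positive definiteness of the boundary period matrix (Theorem \ref{th:period_matrix_invertible}) forces the bridgeworthy contribution in the decomposition of $\theta$ to vanish. Applying Lemma \ref{le:decomposition_lemma} part (2) to $\theta = \Theta(\overline{\gamma},\tau)$ then produces a decomposition with no $\partial\omega$ term, which, compared against the defining expression for $\Theta$ and combined with the isomorphism property of $\mathbf{S}_1\mathbf{R}_1$ and the kernel characterizations of \cite{Schippers_Staubach_scattering_III}, pins down $\overline{\gamma}$ and $\tau$ uniquely; thus $\Theta$ is injective and $\overline{\mathbf{P}}_{\mathrm{cap}}\mathbf{O}^{\mathrm{aug}}$ is a genuine two-sided inverse.

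I expect the main obstacle to be the cohomological bookkeeping. The delicate point is the \emph{exact matching of the catalyst and the overfare}: verifying that the holomorphic projection $\mathbf{P}_{\mathscr{R}}\sigma$ produced by $\mathbf{O}^{\mathrm{aug}}$ recovers $\tau$ and that $\overline{\mathbf{P}}_{\riem_1}\hat{\mathbf{O}}\theta$ recovers $\overline{\gamma}$. Both rely on identifying precisely which of the Part I composition and adjoint identities cause the extraneous terms (the $\overline{\mathbf{T}}_{1,1}$- and $\overline{\mathbf{S}}_1$-contributions appearing in formula \eqref{eq:compatibility_theorem_overfared_2}) to cancel, together with the fact (Corollary \ref{co:exact_overfare_is_strongly_compatible}) that exact overfare produces exactly the compatible triple. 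Getting the signs in the adjoint identities $\overline{\mathbf{T}}_{2,1}^{*} = \mathbf{T}_{1,2}$, $\mathbf{S}_2^{*} = \mathbf{R}_2$ to align with the block form of $S$, and confirming that the bridgeworthy ambiguity genuinely drops out under semi-exactness, are the steps where care is required.
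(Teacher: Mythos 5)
Your first composite is correct but takes a genuinely different route from the paper. The paper never invokes the scattering matrix here: it proves only that $\overline{\mathbf{P}}_{\mathrm{cap}}\mathbf{O}^{\mathrm{aug}}$ is a \emph{left} inverse, by directly computing $\mathbf{O}^{\mathrm{aug}}\Theta(\overline{\gamma},\tau)$ with the overfare formula for $\mathbf{T}_{1,2}$ from \cite[Proposition 4.10]{Schippers_Staubach_scattering_III} and identifying the catalyst as $\overline{\mathbf{S}}_1\overline{\gamma}+\tau$ via \cite[Theorem 4.7]{Schippers_Staubach_scattering_III}; this yields $\mathbf{O}^{\mathrm{aug}}\Theta(\overline{\gamma},\tau)=(\overline{\gamma}-\mathbf{T}_{1,1}\overline{\gamma}+\mathbf{R}_1\tau,\ \overline{\mathbf{S}}_1\overline{\gamma}+\tau)$, whose $\overline{\mathbf{P}}_{\mathrm{cap}}$-projection is visibly $(\overline{\gamma},\tau)$, so injectivity comes for free; surjectivity is then quoted from \cite[Corollary 4.18]{Schippers_Staubach_scattering_III}. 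Your route --- feeding the compatible triple $(\hat{\mathbf{O}}\theta,\theta,\sigma)$ with $\overline{\beta}_2=0$ into the unitary scattering matrix and reading off the middle row of $S^{*}$ --- is a legitimate way to obtain the \emph{right}-inverse identity and hence surjectivity, and it nicely exhibits what unitarity buys; but note that you need the explicit entrywise adjoints $\mathbf{S}_2^{*}=\mathbf{R}_2$ and $(\overline{\mathbf{T}}_{2,1})^{*}=\mathbf{T}_{1,2}$, not abstract unitarity alone, so you are in effect re-deriving the same Part I identities the paper applies directly.

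The genuine gap is in your injectivity step. Lemma \ref{le:decomposition_lemma} part (2) is an \emph{existence} statement: it produces \emph{some} decomposition of a given form and asserts no uniqueness. Comparing the decomposition it yields for $\theta=\Theta(\overline{\gamma},\tau)$ against ``the defining expression for $\Theta$'' and concluding that $\overline{\gamma}$ and $\tau$ are ``pinned down uniquely'' presupposes exactly the triviality of $\ker\Theta$ that you are trying to prove; the bridgeworthy discussion, though correct in itself under the cap hypotheses, does not close this circle. To finish along your own lines you would need: if $-\mathbf{T}_{1,2}\overline{\gamma}+\mathbf{R}_2\tau=0$, then by uniqueness of the catalyzing form (injectivity of $\sigma\mapsto[\mathbf{R}_2\sigma]$ on cohomology, valid because the caps are simply connected) one gets $\overline{\mathbf{S}}_1\overline{\gamma}+\tau=0$, hence $\tau=0$ and $\overline{\mathbf{S}}_1\overline{\gamma}=0$ by type, hence $\overline{\gamma}\in[\overline{\mathbf{R}}_1\overline{\mathcal{A}(\mathscr{R})}]^{\perp}$, and then one must invoke the injectivity of $\mathbf{T}_{1,2}$ on that subspace from Part I. Alternatively, simply compute the left inverse as the paper does; that one computation supersedes the entire second half of your argument.
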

   \begin{proof} The fact that $\Theta$ is surjective follows directly from
   \cite[Corollary 4.18]{Schippers_Staubach_scattering_III}.  
    
    We show it is injective.  Let $(\overline{\gamma},\tau) \in \overline{\mathcal{A}(\riem_1)} \oplus \mathcal{A}(\mathscr{R})$.  
    We then have that 
    \[    - \mathbf{T}_{1,2} \overline{\gamma} +  {\mathbf{R}}_2 \tau - (\overline{\mathbf{R}}_2 \overline{\mathbf{S}}_1 \overline{\gamma} + \mathbf{R}_2  {\tau} )    
            \]
    is exact by \cite[Theorem 4.7]{Schippers_Staubach_scattering_III}.  So applying \eqref{eq:Oaugmented_definition}, we obtain 
    \begin{align}   \label{eq:augmented_overfare_explicit}
     \mathbf{O}^{\mathrm{aug}}  (- \mathbf{T}_{1,2} \overline{\gamma} +  {\mathbf{R}}_2 \tau ) & = ( \hat{\mathbf{O}}(-\mathbf{T}_{1,2} \overline{\gamma} +  {\mathbf{R}}_2 \tau ),\overline{\mathbf{S}}_1 \overline{\gamma} + \tau   )   \nonumber \\
     & = ( \overline{\gamma} -  \mathbf{T}_{1,1} \overline{\gamma}
       + \mathbf{R}_1 \tau_1, \overline{\mathbf{S}}_1 \overline{\delta} +  \tau  )
    \end{align}
    where we have used \cite[Proposition 4.10]{Schippers_Staubach_scattering_III}. to show that
    \begin{align*}
        \hat{\mathbf{O}}(- \mathbf{T}_{1,2} \overline{\gamma} +  {\mathbf{R}}_2 \tau )  & = \mathbf{O}^{\mathrm{e}} (- \mathbf{T}_{1,2} \overline{\gamma} - {\overline{\mathbf{R}}}_1 \overline{\mathbf{S}}_1 \overline{\gamma})  + {\overline{\mathbf{R}}}_1 \overline{\mathbf{S}}_1 \overline{\gamma}  +  {\mathbf{R}}_1 \tau \\
       & = \overline{\gamma} -  \mathbf{T}_{1,1} \overline{\gamma} - \overline{\mathbf{R}}_1 \overline{\mathbf{S}}_1 \overline{\gamma} 
       + \overline{\mathbf{R}}_1 \overline{\mathbf{S}}_1 \overline{\gamma} + \mathbf{R}_1 \tau \\
       & = \overline{\gamma} -  \mathbf{T}_{1,1} \overline{\gamma}
       + {\mathbf{R}}_1 \tau.
    \end{align*}
    Thus 
    \begin{align*}
     \overline{\mathbf{P}}_{\mathrm{cap}} \mathbf{O}^{\mathrm{aug}} \Theta (\overline{\gamma},\tau) & =  \overline{\mathbf{P}}_{\mathrm{cap}} ( \overline{\gamma} -  \mathbf{T}_{1,1} \overline{\gamma}
       + \mathbf{R}_1 \tau,  \overline{\mathbf{S}}_1\overline{\gamma} +  \tau  ) \\
       & = (\overline{\gamma}, \tau).   
    \end{align*}
    This shows that $\overline{\mathbf{P}}_{\mathrm{cap}} \mathbf{O}^{\mathrm{aug}}$ is  a left inverse
    of $\Theta$, and hence $\Theta$ is one-to-one.   
    
    Since $\Theta$ is bounded and bijective, it is an isomorphism, and the left inverse equals the right inverse.
   \end{proof}
   
   The decomposition
   \begin{equation}  \label{eq:riemtwo_decomposition}
      \mathcal{A}^{\mathrm{se}}_{\mathrm{harm}}(\riem_2) = \mathcal{A}^{\mathrm{se}}(\riem_2) \oplus \overline{\mathcal{A}^{\mathrm{se}}(\riem_2)}    
   \end{equation}
   induces a polarization on 
   \[  \mathcal{A}_{\mathrm{harm}}(\riem_1) \oplus \mathcal{A}_{\mathrm{harm}}(\mathscr{R}) \]
  by 
   \begin{align} \label{eq:polarization_general_non_trivialized}
       \mathcal{A}_{\mathrm{harm}}(\riem_1) \oplus \mathcal{A}_{\mathrm{harm}}(\mathscr{R}) 
       & = \mathbf{O}^{\mathrm{aug}} \mathcal{A}^{\mathrm{se}}(\riem_2) \oplus \mathbf{O}^{\mathrm{aug}} \overline{\mathcal{A}^{\mathrm{se}}(\riem_2)} \nonumber \\
       & =: W \oplus \overline{W}.  
   \end{align}
    We also have the fixed polarization 
   \[    \mathcal{A}_{\mathrm{harm}}(\riem_1) \oplus \mathcal{A}_{\mathrm{harm}}(\mathscr{R}) = W_0 \oplus \overline{W_0}  \]
   where 
   \[   W_0 = \overline{\mathcal{A}(\riem_1)} \oplus \mathcal{A}(\mathscr{R}).   \]
   Observe that by Theorem \ref{th:Theta_isomorphism} the new positive polarization $W$ can be written  
   \begin{equation*} 
     W = \mathbf{O}^{\mathrm{aug}} \, \Theta \, W_0. 
   \end{equation*}
   
   Furthermore we have the following result.  Define 
   \begin{align}\label{defn:upsilon}
      \gls{period} :\overline{\mathcal{A}(\riem_1)} \oplus \mathcal{A}(\mathscr{R})  &  \rightarrow \mathcal{A}(\riem_1) \oplus \overline{\mathcal{A}(\mathscr{R})} \\
      \nonumber(\overline{\gamma},\tau) & \mapsto ( - \mathbf{T}_{1,1} \overline{\gamma} +  {\mathbf{R}}_1 \tau,  \overline{\mathbf{S}}_1 \overline{\gamma} ). 
   \end{align}
   Then 
   \begin{theorem}  \label{th:upsilon_expression} We have that $\mathbf{\Upsilon} =  {\mathbf{P}}_{\mathrm{cap}} \mathbf{O}^{\mathrm{aug}} \Theta$.  In particular, 
     the positive polarization $W$ is the graph of $\Upsilon$.  
   \end{theorem}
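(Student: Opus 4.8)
The plan is to read the result straight off the computation already carried out inside the proof of Theorem \ref{th:Theta_isomorphism}. There, the explicit value of $\mathbf{O}^{\mathrm{aug}} \Theta$ on a pair $(\overline{\gamma},\tau)\in \overline{\mathcal{A}(\riem_1)} \oplus \mathcal{A}(\mathscr{R})$ was obtained in \eqref{eq:augmented_overfare_explicit}, namely
\[
  \mathbf{O}^{\mathrm{aug}} \Theta (\overline{\gamma},\tau) = \bigl( \overline{\gamma} - \mathbf{T}_{1,1}\overline{\gamma} + \mathbf{R}_1 \tau,\ \overline{\mathbf{S}}_1 \overline{\gamma} + \tau \bigr).
\]
All that remains is to apply $\mathbf{P}_{\mathrm{cap}} = \mathbf{P}_{\riem_1} \oplus \overline{\mathbf{P}}_{\mathscr{R}}$ to the right-hand side and match the result term by term with the definition \eqref{defn:upsilon} of $\Upsilon$.

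First I would handle the first component. The summand $\overline{\gamma}$ lies in $\overline{\mathcal{A}(\riem_1)}$, so $\mathbf{P}_{\riem_1}\overline{\gamma}=0$; the summands $\mathbf{T}_{1,1}\overline{\gamma}$ and $\mathbf{R}_1\tau$ are holomorphic on $\riem_1$ by the mapping properties of $\mathbf{T}_{1,1}$ and of the restriction of a holomorphic form recalled in Section \ref{subsec:defSchiffer}, so $\mathbf{P}_{\riem_1}$ fixes them. Hence the first component of $\mathbf{P}_{\mathrm{cap}}\mathbf{O}^{\mathrm{aug}}\Theta(\overline{\gamma},\tau)$ is $-\mathbf{T}_{1,1}\overline{\gamma} + \mathbf{R}_1\tau$. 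For the second component, $\overline{\mathbf{S}}_1\overline{\gamma}$ is anti-holomorphic on $\mathscr{R}$ and is therefore fixed by $\overline{\mathbf{P}}_{\mathscr{R}}$, while $\tau \in \mathcal{A}(\mathscr{R})$ is holomorphic and is annihilated; the second component is thus $\overline{\mathbf{S}}_1\overline{\gamma}$. This is precisely $\Upsilon(\overline{\gamma},\tau)$, establishing the identity $\mathbf{\Upsilon} = \mathbf{P}_{\mathrm{cap}}\mathbf{O}^{\mathrm{aug}}\Theta$.

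For the graph statement I would combine this identity with the left-inverse relation $\overline{\mathbf{P}}_{\mathrm{cap}}\mathbf{O}^{\mathrm{aug}}\Theta = \mathrm{Id}$ established in the proof of Theorem \ref{th:Theta_isomorphism}. Since $\mathbf{P}_{\mathrm{cap}}$ and $\overline{\mathbf{P}}_{\mathrm{cap}}$ are the complementary projections onto $\overline{W_0}$ and $W_0$ respectively, for every $w_0 \in W_0$ we get
\[
  \mathbf{O}^{\mathrm{aug}}\Theta\, w_0 = \overline{\mathbf{P}}_{\mathrm{cap}}\mathbf{O}^{\mathrm{aug}}\Theta\, w_0 + \mathbf{P}_{\mathrm{cap}}\mathbf{O}^{\mathrm{aug}}\Theta\, w_0 = w_0 + \mathbf{\Upsilon} w_0.
\]
Because $\Theta$ is an isomorphism onto $\mathcal{A}^{\mathrm{se}}(\riem_2)$, we have $W = \mathbf{O}^{\mathrm{aug}}\mathcal{A}^{\mathrm{se}}(\riem_2) = \mathbf{O}^{\mathrm{aug}}\Theta\, W_0 = \{\, w_0 + \mathbf{\Upsilon} w_0 : w_0 \in W_0 \,\}$, which is exactly the graph of $\mathbf{\Upsilon}$ relative to the decomposition $\mathcal{A}_{\mathrm{harm}}(\riem_1)\oplus\mathcal{A}_{\mathrm{harm}}(\mathscr{R}) = W_0 \oplus \overline{W_0}$.

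I do not expect a genuine obstacle: the analytic content was extracted in Theorem \ref{th:Theta_isomorphism}, and what is left is bookkeeping with the holomorphic/anti-holomorphic projections. The only point demanding care is confirming which projection fixes or annihilates each summand, but these facts follow immediately from the definitions of $\mathbf{T}_{1,1}$, $\overline{\mathbf{S}}_1$, and $\mathbf{R}_1$, together with the orthogonal decomposition \eqref{direct sum decomposition}.
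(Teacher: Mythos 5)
Your proposal is correct and follows exactly the paper's route: the paper's own proof reads off the identity from equation \eqref{eq:augmented_overfare_explicit} in the proof of Theorem \ref{th:Theta_isomorphism} and then combines it with the left-inverse relation $\overline{\mathbf{P}}_{\mathrm{cap}}\mathbf{O}^{\mathrm{aug}}\Theta = \mathbf{I}$ for the graph statement. You have merely made explicit the projection bookkeeping that the paper leaves to the reader.
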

   \begin{proof}
    The first claim follows from \eqref{eq:augmented_overfare_explicit}.  The second claim follows from the first claim together with the fact that $\overline{\mathbf{P}}_{\mathrm{cap}} \mathbf{O}^{\mathrm{aug}} \Theta = \mathbf{I}$ by Theorem \ref{th:Theta_isomorphism}.    
   \end{proof}
   
   We also have
   \begin{theorem} \label{th:upsilon_bounded}
    The operator norm $\| \mathbf{\Upsilon} \| <1$.   
   \end{theorem}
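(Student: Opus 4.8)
The plan is to recognize $\mathbf{\Upsilon}$ as the complex conjugate of a principal compression of the unitary scattering matrix of Theorem \ref{th:unitary_scattering_genus_nonzero}, and then to extract strictness of the norm bound from the fact that $\Theta$ is an isomorphism. Write $\mathbf{U}$ for the $3\times 3$ operator in \eqref{eq:scattering_matrix}, acting $\mathcal{A}(\riem_1)\oplus\mathcal{A}(\riem_2)\oplus\overline{\mathcal{A}(\mathscr{R})}\to\overline{\mathcal{A}(\riem_1)}\oplus\overline{\mathcal{A}(\riem_2)}\oplus\mathcal{A}(\mathscr{R})$, which is unitary. Let $\mathbf{V}$ be its compression to the first and third slots, i.e. $\mathbf{V}(\alpha_1,\overline{\eta})=(-\overline{\mathbf{T}}_{1,1}\alpha_1+\overline{\mathbf{R}}_1\overline{\eta},\,\mathbf{S}_1\alpha_1)$. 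A direct check using $\overline{\mathbf{M}}\,\overline{\alpha}=\overline{\mathbf{M}\alpha}$ shows $\mathbf{\Upsilon}=\overline{\mathbf{V}}$; since conjugation is an isometry, $\|\mathbf{\Upsilon}\|=\|\mathbf{V}\|$, so it suffices to prove $\|\mathbf{V}\|<1$.

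First I would exploit unitarity directly. For a domain vector supported on the first and third slots, $v=(\alpha_1,0,\overline{\eta})$, unitarity gives $\|\mathbf{U}v\|^2=\|v\|^2$. Writing $\mathbf{U}v=(\overline{\beta}_1,\overline{\beta}_2,\xi)$ and noting $\mathbf{V}v=(\overline{\beta}_1,\xi)$, this reads $\|\mathbf{V}v\|^2=\|v\|^2-\|\overline{\beta}_2\|^2$, where the ``lost'' middle component is $\overline{\beta}_2=-\overline{\mathbf{T}}_{1,2}\alpha_1+\overline{\mathbf{R}}_2\overline{\eta}=:\mathbf{W}v$. Thus $\|\mathbf{V}\|<1$ is equivalent to $\mathbf{W}$ being bounded below, i.e. $\|\mathbf{W}v\|\ge c\|v\|$ for some $c>0$ on the first-and-third subspace.

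The key step is to identify this middle-row operator with $\Theta$: taking conjugates gives $\overline{\mathbf{W}}(\overline{\gamma},\tau)=-\mathbf{T}_{1,2}\overline{\gamma}+\mathbf{R}_2\tau=\Theta(\overline{\gamma},\tau)$, so $\mathbf{W}=\overline{\Theta}$. By Theorem \ref{th:Theta_isomorphism}, $\Theta$ is a bounded bijection onto $\mathcal{A}^{\mathrm{se}}(\riem_2)$, hence by the open mapping theorem it is bounded below; since conjugation is isometric, $\mathbf{W}$ is bounded below with the same constant $c>0$. Feeding this back gives $\|\mathbf{V}v\|^2\le(1-c^2)\|v\|^2$, whence $\|\mathbf{\Upsilon}\|=\|\mathbf{V}\|\le\sqrt{1-c^2}<1$. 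For the genus-zero case one repeats the argument verbatim with the $2\times 2$ unitary of Theorem \ref{th:unitary_scattering_genus_zero}, using $\mathcal{A}(\mathscr{R})=\{0\}$ so that $\mathbf{W}=-\overline{\mathbf{T}}_{1,2}$ and $\Theta=-\mathbf{T}_{1,2}$. The one genuine obstacle is the passage from the trivial bound $\|\mathbf{V}\|\le 1$ (automatic for any compression of a unitary) to the strict bound: this is exactly where quantitative invertibility of $\Theta$ --- and hence, implicitly, the quasicircle hypothesis encoded in Theorem \ref{th:Theta_isomorphism} --- is indispensable.
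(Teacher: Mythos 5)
Your proof is correct. Both arguments ultimately rest on the same two ingredients: the Pythagorean identity
\begin{equation*}
\| \mathbf{\Upsilon}(\overline{\gamma},\tau) \|^2 = \| (\overline{\gamma},\tau) \|^2 - \| \Theta(\overline{\gamma},\tau) \|^2
\end{equation*}
and the fact that $\Theta$, being a bounded bijection, is bounded below by the open mapping theorem. Where you differ from the paper is in how the identity is obtained. The paper derives it by a direct computation, expanding $\| -\mathbf{T}_{1,1}\overline{\gamma} + \mathbf{R}_1\tau \|^2$ and invoking the adjoint and norm identities of Part I (Theorems 3.19, 3.21, 3.23 and Equation 3.17 of \cite{Schippers_Staubach_scattering_III}) term by term. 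You instead observe that $\mathbf{\Upsilon}$ is the complex conjugate of the compression of the unitary scattering matrix of Theorem \ref{th:unitary_scattering_genus_nonzero} to the first and third slots, and that the discarded middle row is exactly $\overline{\Theta}$; the identity then drops out of $\|\mathbf{U}v\|=\|v\|$ applied to $v=(\alpha_1,0,\overline{\eta})$. Since the unitarity of $\mathbf{U}$ is asserted as an unconditional operator statement (not contingent on compatibility of a triple), this appeal is legitimate, and there is no circularity: the unitarity theorem is itself proved from the same Part I identities the paper's computation uses, so your route simply reuses that work rather than redoing it. What your packaging buys is a cleaner conceptual picture --- $\mathbf{\Upsilon}$ and $\Theta$ are complementary blocks of a compressed unitary, and the norm defect of $\mathbf{\Upsilon}$ is precisely the energy carried by the $\riem_2$-component --- at the price of making Theorem \ref{th:upsilon_bounded} logically dependent on the scattering section, which the paper's self-contained computation avoids. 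Your handling of the genus-zero case and your reduction of $\|\mathbf{V}\|<1$ to bounded-belowness of $\mathbf{W}$ are both accurate.
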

   \begin{proof}
    Using the notation of the proof of Theorem \ref{th:Theta_isomorphism}, we have 
    \begin{align}  \label{eq:upsilon_bound_temp_one} 
        \| - \mathbf{T}_{1,1} \overline{\gamma} +  {\mathbf{R}}_1 \tau \|^2 
        & = \| \mathbf{T}_{1,1} \overline{\gamma} \|^2 - 2 \text{Re} \left< \mathbf{T}_{1,1} \overline{\gamma}, \mathbf{R}_1 \tau \right> 
        + \| \mathbf{R}_1 \tau \|^2.
    \end{align}
    Now by \cite[Theorem 3.19]{Schippers_Staubach_scattering_III} and \cite[Equation 3.17]{Schippers_Staubach_scattering_III} we have that
    \begin{align} \label{eq:upsilon_bound_temp_two}
     \left< \mathbf{T}_{1,1} \overline{\gamma}, \mathbf{R}_1 \tau \right> & = \left< \mathbf{S}_1 \mathbf{T}_{1,1} \overline{\gamma} ,\tau \right> 
     = - \left< \mathbf{S}_2 \mathbf{T}_{1,2} \overline{\gamma} ,\tau \right> \nonumber \\
     & = - \left< \mathbf{T}_{1,2} \overline{\gamma}, \mathbf{R}_2 \tau \right>.
    \end{align}
    Furthermore \cite[Theorems 3.19, 3.21]{Schippers_Staubach_scattering_III} yield that
    \begin{equation}  \label{eq:upsilon_bound_temp_three}
        \| \mathbf{R}_1 \tau \|^2 = \| \tau \|^2 - \| \mathbf{R}_2 \tau \|^2.
    \end{equation}
    Inserting \eqref{eq:upsilon_bound_temp_two} and \eqref{eq:upsilon_bound_temp_three} in \eqref{eq:upsilon_bound_temp_one} we obtain 
    \begin{equation} \label{eq:upsilon_bound_temp_four}
        \| - \mathbf{T}_{1,1} \overline{\gamma} +  {\mathbf{R}}_1 \tau \|^2  = 
        \| \mathbf{T}_{1,1} \overline{\gamma} \|^2 + 2 \text{Re} \left<  \mathbf{T}_{1,2} \overline{\gamma} ,\mathbf{R}_2 \tau \right> - \| \mathbf{R}_2 \tau \|^2 + \| \tau \|^2.  
    \end{equation}
    By \cite[Theorem 3.23]{Schippers_Staubach_scattering_III}, 
    \[  \| \mathbf{T}_{1,1} \overline{\gamma} \|^2 + \| \overline{\mathbf{S}}_{1} \overline{\gamma} \|^2 = \| \overline{\gamma} \|^2 - \| \mathbf{T}_{1,2} \overline{\gamma} \|^2  \]
    which when inserted in \eqref{eq:upsilon_bound_temp_four} yields
    \begin{align*}
        \| (-\mathbf{T}_{1,1} \overline{\gamma} +  {\mathbf{R}}_1 \tau,\overline{\mathbf{S}}_1 \overline{\gamma}) \|^2 & = 
        \| \overline{\gamma} \|^2 -\| \mathbf{T}_{1,2} \overline{\gamma} \|^2 + 2 \text{Re} \left<  \mathbf{T}_{1,2} \overline{\gamma} ,\mathbf{R}_2 \tau \right> - \| \mathbf{R}_2 \tau \|^2 + \| \tau \|^2  \\
        & = \| (\overline{\gamma},\tau )\|^2 - \| - \mathbf{T}_{1,2} \overline{\gamma} + \mathbf{R}_2 \tau \|^2. 
    \end{align*}
    Now since $(\overline{\gamma},\tau) \mapsto -\mathbf{T}_{1,2} \overline{\gamma} + \mathbf{R}_2 \tau$ is an isomorphism 
   by Theorem \ref{th:Theta_isomorphism}, there is a $c<1$ (uniform for all $(\overline{\gamma},\tau)$) such that
    \[ \| - \mathbf{T}_{1,2} \overline{\gamma} + \mathbf{R}_2 \tau \| \geq c \| (\overline{\gamma},\tau) \|,   \]
    therefore
    \begin{equation*}
        \| (-\mathbf{T}_{1,1} \overline{\gamma} + {\mathbf{R}}_1 \tau,\overline{\mathbf{S}}_1 \overline{\gamma}) \|^2 \leq (1-c^2) 
        \| (\overline{\gamma},\tau) \|^2,
    \end{equation*}
    and the proof is completed.
   \end{proof}
\end{subsection}
\begin{subsection}{Generalized polarizations}
 \label{se:KYNS_period} 
 Theorems \ref{th:upsilon_expression} and \ref{th:upsilon_bounded} generalize the Kirillov-Yuri'ev-Nag-Sullivan (KYNS) period map to Riemann surfaces of arbitrary genus and number of boundary curves, and unify it with the classical period map of compact surfaces. This fact will be shown below, but prior to that, we shall review some of the literature.  For the sake of consistency, we impose our notation and choice of function spaces in this review of the literature. For example, we freely take advantage of the isomorphism between the homogeneous Sobolev space, Dirichlet space and the Bergman space of one-forms on the disk.
 
 S. Nag and D. Sullivan \cite{NS}, following A. A. Kirillov and D. V. Yuri'ev \cite{KrillovYuriev} in the smooth case, showed how the group of quasisymmetries of the circle $\mathrm{QS}(\mathbb{S}^1)$ acts symplectically on  $\dot{H}^{1/2}(\mathbb{S}^1)$. Setting $W_0 = \mathcal{A}(\disk)$, $\overline{W}_0 = \overline{\mathcal{A}(\disk)}$,
 we have the standard polarization
 \[ \dot{H}^{1/2}(\mathbb{S}^1) = W_0 \oplus \overline{W}_0.  \]
 Each quasisymmetry induces a new positive polarization $W \oplus \overline{W} = \dot{H}^{1/2}(\mathbb{S}^1)$, or equivalently an operator $\mathbf{Gr}:\overline{W}_0 \rightarrow W_0$ of norm strictly less than one whose graph is $W$. Kirillov and Yu'riev \cite{KY2} made the important discovery that this operator can be identified with the classical Grunsky operator. Takhtajan and Teo \cite{Takhtajan_Teo_Memoirs} showed that the resulting period mapping taking an element of the universal Teichm\"uller space to its operator $\mathbf{Gr}$ is holomorphic for both the full universal  Teichm\"uller space and the Weil-Petersson universal Teichm\"uller space.\\

Now fix a Riemann surface $\riem$ of genus $g$ with $N$ boundary curves.  We choose a collection 
 \[ \phi_k:\mathbb{S}^1 \rightarrow \partial_k \riem, \ \ \ k=1,\ldots,n \] 
 of quasisymmetric mappings, whose purpose is to map the boundary values into a fixed space.  Sew on $n$ copies of the disk $\disk$ using the mapping $\phi_k$ to identify points of the boundary of the disk with points of $\partial_k \riem$, for $k=1,\ldots,n$.  By \cite[Theorems 3.2, 3.3]{RadnellSchippers_monster}  the resulting compact topological space has a unique complex structure compatible with $\riem_2$ and the disks $\disk$.  We call this Riemann surface $\mathscr{R}$, and the common boundaries of each $\disk$ and $\partial_k \riem_2$ for each $k$ are a separating complex of quasicircles satisfying the assumptions of Section \ref{se:period_mapping_assumptions}. After uniformizing, the copies of the disk are identified with simply connected domains $\Omega_1,\ldots,\Omega_n$ which are biholomorphic to $\disk$ and bounded by non-intersecting quasicircles.  Set $\riem_1 = \Omega_1 \cup \cdots \cup \Omega_n$ and let $\riem_2$ denote the complement of their closure in $\riem_1$. For $k=1,\ldots,n$, $\Omega_k$ is biholomorphic to the disk under the conformal extensions 
 \[ f_k: \disk \rightarrow  \Omega_k \]
 of the quasisymmetric maps $\phi_k$. Furthermore $\riem$ is biholomorphic to $\riem_2$ under the uniformizing map; we henceforth identify $\riem_2$ with $\riem$. 

 { Recall now from the previous section the decomposition
   \begin{equation*}  
      \mathcal{A}^{\mathrm{se}}_{\mathrm{harm}}(\riem) = \mathcal{A}^{\mathrm{se}}(\riem) \oplus \overline{\mathcal{A}^{\mathrm{se}}(\riem)}    
   \end{equation*} 
   and the induced polarization on 
   \[  \mathcal{A}_{\mathrm{harm}}(\riem_1) \oplus \mathcal{A}_{\mathrm{harm}}(\mathscr{R}) \]
  by 
   \begin{align*}  
       \mathcal{A}_{\mathrm{harm}}(\riem_1) \oplus \mathcal{A}_{\mathrm{harm}}(\mathscr{R}) 
       & = \mathbf{O}^{\mathrm{aug}} \mathcal{A}^{\mathrm{se}}(\riem_2) \oplus \mathbf{O}^{\mathrm{aug}} \overline{\mathcal{A}^{\mathrm{se}}(\riem_2)} \nonumber \\
       & =: W \oplus \overline{W}.  
   \end{align*}
  Now $W$ represents the boundary values of the semi-exact holomorphic one-forms $\mathcal{A}^{\mathrm{se}}(\riem_2)$, together with the necessary cohomological data to recover them. We will pull this back to a fixed space in the spirit of the KYNS and classical period mappings.
 }
 
 Let $\Omega(\mathscr{R})$ denote the cohomology classes of closed $L^2$ one-forms on $\mathscr{R}$.
 The complex structure on $\riem$ determines a complex structure on $\mathscr{R}$, which thus determines the class of harmonic forms on $\mathscr{R}$.  We thus obtain a map 
 \[ \mathbf{P}_{\mathrm{harm},\mathscr{R}}:\Omega(\mathscr{R}) \rightarrow \mathcal{A}_{\mathrm{harm}}(\mathscr{R})  \]
 which depends on the complex structure of $\mathscr{R}$ (and hence of $\riem$).  We then define the projections onto the anti-holomorphic and holomorphic parts
 \[ \mathbf{P}_{\mathrm{harm},\mathscr{R}} = \mathbf{P}_\mathscr{R} \oplus \overline{\mathbf{P}}_\mathscr{R}:\Omega(\mathscr{R}) \rightarrow \mathcal{A}(\mathscr{R}) \oplus \overline{\mathcal{A}(\mathscr{R})}  \] 
 (note that we are expanding the domain of $\mathbf{P}_{\mathscr{R}}$ to $\Omega(\mathscr{R})$ for the sake of the discussion in this section ).
 Thus we obtain a polarization 
 \[ \Omega(\mathscr{R}) = W_{\mathscr{R}} \oplus \overline{W}_{\mathscr{R}}  \]
 via 
 \[  W_{\mathscr{R}} = \mathbf{P}_{\mathscr{R}} \Omega(\mathscr{R}),  \ \ \  \overline{W}_{\mathscr{R}} = \overline{\mathbf{P}}_{\mathscr{R}}  \Omega(\mathscr{R}).    \]
 
 Denoting $f=f_1 \times \cdots \times f_n$ and $\mathcal{A}_{\mathrm{harm}}(\disk)^n = \mathcal{A}_{\mathrm{harm}} \oplus \cdots \oplus \mathcal{A}_{\mathrm{harm}}$ we then have the polarization
 \[  \mathcal{A}_{\mathrm{harm}}(\disk)^n \oplus \Omega(\mathscr{R}) = 
   \mathcal{W}  \oplus \overline{\mathcal{W}}     \]
   where  
   \begin{equation}
   \label{eq:general_polarization}
     \mathcal{W} = (f^*,\mathbf{Id}) W 
   \end{equation}
   and $W$ is given by \eqref{eq:polarization_general_non_trivialized}. 
Thus this combines both the classical polarization associated to the complex structure of the compact surface (in the second entry of \eqref{eq:general_polarization}) and the boundary values of the set of one-forms (in the first element of \eqref{eq:general_polarization}).   

 \begin{remark}
  Observe that both the set of quasisymmetries $\phi_k:\mathbb{S}^1 \rightarrow \partial_k \riem$ and the space of closed $L^2$ one-forms is unchanged under quasiconformal deformations $f:\riem \rightarrow \riem$. Thus the space $\Omega^{\mathrm{se}}(\riem)$
  is invariant under a quasiconformal deformation of the complex structure of $\riem$. This fact is one of the motivations for our analytic choices ($L^2$-boundary values and separating curves being quasicircles) in this paper.

 \end{remark}
 \end{subsection}
 \begin{subsection}{Generalized Grunsky inequalities}
  \label{se:Grunsky}
 In this section, we show how Theorem \ref{th:upsilon_bounded} generalizes various versions of the Grunsky inequalities appearing in the literature to the case of surfaces of type $(g,n)$, after pulling back to $n$ copies of the disk.   In general, it is the overfare results (either in special cases or in general) which makes it possible to interpret the Grunsky portion of the polarization in terms of boundary values. Here we consider two cases:\\
 
 {\bf Case I: $g=0$.} If we assume that the genus of $\riem_1$ is zero and $n=1$, then $\mathscr{R} = \sphere$, $\mathcal{A}(\mathscr{R}) = \{0 \}$ and $\mathcal{A}^{\mathrm{se}}(\riem_2)=\mathcal{A}^\mathrm{e}(\riem_2)$. Also note that $(\overline{\mathbf{R}}_1 \overline{\mathcal{A}(\mathscr{R})})^\perp = \overline{\mathcal{A}(\riem_1)}$. 
 Thus the map $\Theta$ (defined previously by \eqref{defn: captial theta}) takes the form
 \begin{align*}
    \Theta:\overline{\mathcal{A}(\riem_1)} & \rightarrow \mathcal{A}^{\mathrm{e}}(\riem_2) \\    
    \overline{\gamma} & \mapsto \mathbf{T}_{1,2} \overline{\gamma}.
 \end{align*}
 In the case that $n=1$ the fact that $\mathbf{T}_{1,2}$ is an isomorphism was first proved by V. V. Napalkov and R. S. Yulmukhametov \cite{Nap_Yulm}. In the genus zero case for general $n$ this is due to Radnell, Schippers, and Staubach \cite{RSS_Dirichletspace}.
 
 The Grunsky inequalities are obtained as follows. With the observations above, the map $\Upsilon$ (defined previously by \eqref{defn:upsilon}) is seen to take the form 
 \begin{align*}
     \Upsilon: \overline{\mathcal{A}(\riem_1)} & \rightarrow \mathcal{A}(\riem_2) \\
     \overline{\gamma} & \mapsto - \mathbf{T}_{1,1} \overline{\gamma}. 
 \end{align*}
 so that Theorem \ref{th:upsilon_bounded} implies that
 \begin{equation}\label{norm of T11}
      \| \mathbf{T}_{1,1} \| <1.
 \end{equation}  
 This is equivalent to the classical estimate on the classical Schiffer operator in the sphere. This version of the Grunsky inequalities first appeared in Bergman and Schiffer \cite{BergmanSchiffer}, though they assume that the boundary curves are analytic.  
 
 Explicitly, in the case that $n=1$, pulling back this estimate to the disk via the map $f:\disk \rightarrow \riem_1$ we obtain the usual form of the Grunsky inequalities. Following \cite{Schippers_Staubach_CAOT}, we define the Grunsky operator as follows
 \begin{align*}
     \gls{grunsk} : \overline{\mathcal{A}(\disk)} & \mapsto \mathcal{A}(\disk) \\
     \overline{\alpha} & \mapsto \mathbf{P}_{\disk} f^* \mathbf{O}^\mathrm{e}_{2,1} \mathbf{T}_{1,2} (f^{-1})^* \overline{\alpha}.
 \end{align*}
 See \cite{Schippers_Staubach_CAOT} for the relation to the usual Grunsky operator written in terms of Faber polynomials and Grunsky coefficients. \cite[Proposition 4.10]{Schippers_Staubach_scattering_III}. and the obvious fact that $\mathbf{P}_{\disk} f^* = f^* \mathbf{P}_{\riem_1}$ we obtain
 \begin{align} \label{eq:Grunsky_conjugation_expression}
     \mathbf{Gr}_f \overline{\alpha} & = f^* \mathbf{P}_{\riem_1} \mathbf{O}^\mathrm{e} \mathbf{T}_{1,2} (f^{-1})^* \overline{\alpha} \nonumber \\
     & = f^* \mathbf{P}_{\riem_1} \mathbf{O}^\mathrm{e} ( - (f^{-1})^*\overline{\alpha} + \mathbf{T}_{1,1} (f^{-1})^* \overline{\alpha} ) \nonumber \\
     & = - f^* \mathbf{T}_{1,1} (f^{-1})^* \overline{\alpha}.
 \end{align}
 Since $f^*$ and $(f^{-1})^*$ are isometries \eqref{norm of T11} yields that 
 \[ \| \mathbf{Gr}_f \| <1. \]
 
 This is equivalent to an integral form of the Grunsky inequalities due to Bergman-Schiffer \cite{BergmanSchiffer}. To see this, we recall some formulas for the Schiffer operators in the special case of the sphere and domain $\riem_1 \subseteq \sphere$, see \cite[Example 3.1]{Schippers_Staubach_scattering_III}. We have in general that
 \begin{equation}  \label{eq:nonsingular_Schiffer}
  \mathbf{T}_{1,1} \alpha(z)= \iint_{\riem_1,w} \left(L_\mathscr{R}(z,w) - L_{\riem_1}(z,w) \right) 
  \wedge \overline{\alpha(w)}. 
  \end{equation}
 where 
 \[  L_{\riem_1}(z,w) =   \frac{1}{\pi i} \partial_z \partial_w G_{\riem_1}(w,z),  \]
 and recall that $G_{\riem_1}$ is Green's function of $\riem_1$. 

 If $\mathscr{R}$ is the Riemann sphere $\sphere$, we have
 \[  \mathscr{G}(w,\infty;z,q) = - \log{ \frac{|w-z|}{|w-q|}}.     \]
 Thus
 \[  L_{\sphere}(z,w) = - \frac{1}{2 \pi i} \frac{dw \,dz}{(w-z)^2}.  \]
 So the Schiffer operators are given by, for $\overline{\alpha(z)} = \overline{h(z)}d \bar{z}$,
 \[  \ \mathbf{T}_1  \, \overline{\alpha} (z) =
    \frac{1}{\pi} \iint_{\riem_1} \frac{\overline{h(w)}}{(w-z)^2} \frac{d\bar{w} 
    \wedge d w}{2i}  \cdot dz.  \]
 
 Using conformal invariance of Green's function 
 we have that
 \[  L_{\riem_1}(z,w) = - \frac{1}{2 \pi i} \frac{(f^{-1})'(w)  (f^{-1})'(z) \,dw \,dz }{(f^{-1}(w)-f^{-1}(w))^2}    \]
 (see \cite[Example 3.2]{Schippers_Staubach_scattering_III}).
 
 Combining this with equation \eqref{eq:nonsingular_Schiffer} we obtain
 \[  \mathbf{T}_{1,1} \overline{\alpha} = \iint_{\riem_1} \frac{dz}{2 \pi i} \left[ \frac{dw}{(w-z)^2} - \frac{(f^{-1})'(w) (f^{-1})'(z) \,dw}{(f^{-1}(w)-f^{-1}(z))^2}  \right] \wedge_w \overline{\alpha(w)}.    \]
 Now let $\overline{\alpha(w)} = \overline{h'(w)} d\bar{w} \in \overline{\mathcal{A}(\disk)}$ (where $h(w) \in \mathcal{D}(\disk)$). Then using the above together with \eqref{eq:Grunsky_conjugation_expression} we see that (after a change of variables)
 \[  \mathbf{Gr}_f \overline{\alpha}  = 
\frac{1}{\pi} \iint_{\disk} dz \left[ \frac{1}{(w-z)^2} - \frac{f'(w) f'(z)}{(f(w)-f(z))^2}  \right] \overline{h'(w)}\frac{ d\bar{w} \wedge dw}{2i}.  \]
It is a well-known fact, originating with Bergman and Schiffer \cite{BergmanSchiffer}, that the bound of one on the norm of this operator implies the Grunsky inequalities for the function $f$ (see e.g. \cite{Schippers_Staubach_CAOT,Schippers_Staubach_Grunsky_expository}).

 Similarly, in the case that $n>1$, pulling back to $\mathbb{D}^n$ via the maps $f_1,\ldots,f_n$ results in the Grunsky operator for multiply-connected domains (see \cite{RSS_Dirichletspace}). 
 
 For a detailed discussion of the literature surrounding the case $n=1$ see {\cite{Schippers_Staubach_Grunsky_expository}}.\\

 {\bf Case II: $g>0$.}  The Grunsky operator in higher genus was defined, and bounds obtained, by M. Shirazi \cite{Shirazi_thesis,Shirazi_Grunsky}, for the case of Dirichlet bounded functions. Here we formulate this in terms of $\mathcal{A}^\mathrm{e}(\riem_2)$, which is of course equivalent up to constants. First, as in \cite{Shirazi_Grunsky} we restrict our attention to the space $(\overline{\mathbf{R}}_1 \overline{\mathcal{A}(\mathscr{R})})^\perp$ and ignore the second component of $\Theta$; that is, we consider
 \[  \Theta' = \left. \Theta \right|_{((\overline{\mathbf{R}}_1 \overline{\mathcal{A}(\mathscr{R})})^\perp \oplus \{0\})}. \]
 In that case, the operator $\Theta$ takes the form 
 \begin{align*}
     \Theta':(\overline{\mathbf{R}}_1 \overline{\mathcal{A}(\mathscr{R})})^\perp & \rightarrow \mathcal{A}^\mathrm{e}(\riem_2) \\
     \overline{\alpha} & \mapsto - \mathbf{T}_{1,2} \overline{\alpha}.
 \end{align*}
 The fact that $\Theta'$ is an isomorphism was obtained by M. Shirazi \cite{Shirazi_thesis,Schippers_Staubach_Shirazi}. We have that the restriction  
 \[  \Upsilon' = \left. \Upsilon \right|_{((\overline{\mathbf{R}}_1 \overline{\mathcal{A}(\mathscr{R})})^\perp \oplus \{0\})} \]
 takes the form 
 \begin{align*}
     \Upsilon': (\overline{\mathbf{R}}_1 \overline{\mathcal{A}(\mathscr{R})})^\perp& \rightarrow \mathcal{A}^\mathrm{e}(\riem_2) \\
     \overline{\gamma} & \mapsto - \mathbf{T}_{1,1} \overline{\gamma}. 
 \end{align*}
 so that once again Theorem \ref{th:upsilon_bounded} implies that
 \[  \| \mathbf{T}_{11} \| <1. \]
 As in the genus zero case, we can define the Grunsky operator 
 \begin{align*}
     \mathbf{Gr}_f : V & \mapsto \bigoplus^n \mathcal{A}(\disk) \\
     \overline{\alpha} & \mapsto \mathbf{P}_{\disk} f^* \mathbf{O}^\mathrm{e}_{2,1} \mathbf{T}_{1,2} (f^{-1})^* \overline{\alpha}.
 \end{align*}
 where 
 \[  V= f^* (\overline{\mathbf{R}}_1 \overline{\mathcal{A}(\mathscr{R})})^\perp \]
 and $f^* = f_1^* \times \cdots \times f_n^*$.
 The Grunsky inequality obtained by M. Shirazi mentioned above is that the norm of $\mathbf{Gr}_f$ is less than one, which follows from $\| \Upsilon \| <1$. By Section \ref{se:KYNS_period} (restricting to exact one-forms), the graph of this Grunsky operator can be interpreted as the set of boundary values of holomorphic functions. See the work of Shirazi \cite{Shirazi_thesis,Shirazi_Grunsky}, for the details.
 
 Here we have not dealt with the deformation theory of Riemann surfaces, since that would require lengthening the paper impractically. The results of this entire paper, and in particular the above discussion, should be placed in the context of Teichm\"uller theory. This would include for example demonstration of the holomorphicity of this period map as well as holomorphicity of its restriction to the Weil-Petersson Teichm\"uller space. The holomorphicity in genus zero for $n$ boundary curves was proven in \cite{RSS_genus_zero}. We hope to deal with the general case, along with a treatment of the symplectic group actions by quasisymmetric reparameterizations, in future publications. 
 
\end{subsection}
\begin{subsection}{The holomorphic boundary value problem} 
\label{se:holomorphic_BVP}
 We motivate the problem, placing analytic issues aside for the moment.\\
 
 {\bf{Problem.}} Given a one-form $\alpha$ on the boundary of $\riem_2$ and a fixed cohomology class on $\riem_2$, is there a holomorphic one-form on $\riem_2$ with boundary values equal to $\alpha$?\\
 
The cohomology class can be fixed by specifying periods, or equivalently any closed one-form in $L^2(\riem_2)$ in that cohomology class. We express the boundary values of the one-form $\alpha$ by parametrizing the boundary by maps $\phi_k:\mathbb{S}^1 \rightarrow \partial_k {\riem}_2$ from the circle to the boundaries .  That is, we look at the boundary parametrization as a kind of coordinate, and pull back the one-form to the circle, and specify the data on $\mathbb{S}^1$.  This data can be viewed as a one-form.   
 
 Adding analytic issues to the picture, assume now that the one-form is in $\mathcal{H}'(\partial \riem)$ and the boundary parametrization is a quasisymmetry.  If it has zero period around its boundaries, then the anti-derivative is an element of $\mathcal{H}(\partial \riem)$, and its pull-back to the disk is an element of $\mathcal{H}(\mathbb{S}^1)$.  In the general case, the original data can be shown to be an element of $\mathcal{H}'(\mathbb{S}^1)$.  
 
 An equivalent picture is as follows.  We sew copies of the disk $\disk^+$ to each boundary curve via quasisymmetries $\phi_1,\ldots,\phi_n$ as in Section \ref{se:KYNS_period} to obtain the surface $\riem_2$ capped by $\riem_1$, with conformal maps $f_k:\disk \rightarrow \Omega_k$ where $\Omega_k$ are the connected components of $\riem_1$.   The data can now be taken to be elements of $\mathcal{H}'(\partial \riem_1)$, and the cohomology class can be specified by an element of $\mathcal{A}_{\mathrm{harm}}(\mathscr{R})$.    
 
 With this motivation, consider the following boundary value problem for holomorphic one-forms. We treat the case that the periods around boundary curves $\partial_k \riem_2$ are zero.
 From this point forward, we make careful analytic definitions and statements.\\
 
 We first state the problem in terms of $H^{-1/2}$ boundary values. 
 \begin{definition}[Holomorphic boundary value problem for semi-exact one-forms with $H^{-1/2}$ data]
 \[  \lambda = (\lambda_1,\ldots,\lambda_{2g}) \in \mathbb{C}^{2g}, \] and let $L \in H^{-1/2}(\partial \riem_2)$.  We say that $\beta \in \mathcal{A}^{\mathrm{se}}(\riem_2)$ solves the holomorphic boundary value problem if it satisfies  
 \[  L_{[\beta]} = L  \]
 and 
 \[ \int_{c_j} \beta = \lambda_j   \]
 for $j=1,\ldots,2g$.
 \end{definition}

 The problem is not well-posed in general.  We will give precise conditions for the existence of a solution momentarily.  
 
First, we reformulate the problem using the theory of Sections 3.6 and 4 of \cite{Schippers_Staubach_scattering_II}.  
 Assume that $\beta$ solves the boundary value problem with respect to the data $\lambda$ and $L$. Assume also that $\delta \in \mathcal{A}_{\mathrm{harm}}(\riem_1)$ is the solution to the $H^{-1/2}$ boundary value problem on $\riem_1$ with respect to $\mathbf{O}'(\partial \riem_2,\partial \riem_1)L_{[\delta]}$. Such a solution is guaranteed to exist by Theorem 3.25 in \cite{Schippers_Staubach_scattering_II}.
 
 applied separately to each connected component of $\riem_1$. Let $\zeta$ be the unique element of  $\mathcal{A}_{\mathrm{harm}}(\mathscr{R})$ with periods 
 \[ \int_{c_j} \zeta = \lambda_j.  \] 
 Then $\delta$ and $\beta$ are weakly compatible with respect to $\zeta$.  
 
 Conversely, if $\delta \in \mathcal{A}_{\mathrm{harm}}(\riem_1)$ and $\beta \in \mathcal{A}(\riem_2)$ are weakly compatible with respect to $\zeta \in \mathcal{A}_{\mathrm{harm}}(\mathscr{R})$ then $\beta$ solves the boundary value problem with data $L=\mathbf{O}'(\partial \riem_1,\partial \riem_2) L_{[\delta]}$.  
 
 Thus we have the following reformulation of the boundary value problem.
 \begin{definition}[Holomorphic CNT Dirichlet BVP for one-forms, semi-exact case]  Let \[  (\delta,\zeta) \in \mathcal{A}_{\mathrm{harm}}(\riem_1) \oplus \mathcal{A}_{\mathrm{harm}}(\mathscr{R}).  \]  We say that $\beta \in \mathcal{A}^{\mathrm{se}} (\riem_2)$ solves the holomorphic boundary value problem with respect to this data if $\delta$ and $\beta$ are weakly compatible with respect to the one-form $\zeta$.  
 \end{definition}

 This allows us to solve the BVP in the following way.  
 \begin{theorem}[Well-posedness of the semi-exact CNT BVP for holomorphic one-forms] \label{th:HBVP_solution_semi-exact}   Let the data $(\delta,\zeta)$ for the semi-exact holomorphic \emph{BVP} be given as above, and
  assume that $\nu,\tau$ are the unique elements of  $\mathbf{R}_1 \mathcal{A}(\mathscr{R})$ such that   $\overline{\mathbf{S}}_1 \overline{\nu} + \mathbf{S}_1 \tau = \zeta$.  
 The semi-exact holomorphic \emph{CNT} Dirichlet \emph{BVP} for forms has a solution with data $(\delta,\zeta)$ if and only if 
 \begin{equation} \label{eq:one_form_BVP_solvability_condition}
   \left[ \delta - \mathbf{R}_1 \mathbf{S}_1  \tau \right]  \in  \mathrm{Im}  [\mathbf{I} - \mathbf{T}_{1,1}].
 \end{equation}
  If this solution exists, it is unique and equals 
  \[  \beta =  -\mathbf{T}_{1,2} \overline{\gamma} + \mathbf{R}_2  \mathbf{S}_1 \tau \]
  where $\overline{\gamma} \in \overline{\mathcal{A}(\riem_1)}$ is the unique one-form such that 
  \[  \overline{\gamma} - \mathbf{T}_{1,1} \overline{\gamma} = \delta - \mathbf{R}_1  \mathbf{S}_1  \tau.  \]
  The component of this unique $\overline{\gamma}$ in $\overline{\mathbf{R}}_1 \overline{\mathcal{A}(\mathscr{R})}$ is $\overline{\nu}$.   Furthermore the solution depends continuously on the initial data.
 \end{theorem}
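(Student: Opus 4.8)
The plan is to convert the boundary value problem into a single linear equation governed by $\mathbf{I}-\mathbf{T}_{1,1}$, using the parametrization of $\mathcal{A}^{\mathrm{se}}(\riem_2)$ by $\Theta$ together with the augmented overfare $\mathbf{O}^{\mathrm{aug}}$. First I would observe that, because each cap $\Omega_k$ is simply connected with a single boundary curve, a harmonic one-form on $\riem_1$ whose $\mathcal{H}'$-boundary values vanish must itself vanish. Consequently weak compatibility of $\delta$ and $\beta$ forces $\hat{\mathbf{O}}\beta=\delta$ (condition (1) of Definition \ref{de:weakly_compatible}), while condition (2) forces the cohomology form $\sigma$ attached to $\beta$ by $\mathbf{O}^{\mathrm{aug}}$ to equal $\zeta$; the latter uses that, since the caps carry no handles, every handle of $\mathscr{R}$ lies in $\riem_2$, so the $\sigma$ with $\beta-\mathbf{R}_2^{\mathrm{h}}\sigma$ exact is unique. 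Thus $\beta$ solves the reformulated BVP if and only if $\mathbf{O}^{\mathrm{aug}}\beta=(\delta,\zeta)$. Since $\Theta$ is onto $\mathcal{A}^{\mathrm{se}}(\riem_2)$ with left inverse $\overline{\mathbf{P}}_{\mathrm{cap}}\mathbf{O}^{\mathrm{aug}}$ (Theorem \ref{th:Theta_isomorphism}) and $\mathrm{Im}\,\mathbf{O}^{\mathrm{aug}}$ is the graph of $\mathbf{\Upsilon}$ (Theorem \ref{th:upsilon_expression}), solvability is equivalent to $(\delta,\zeta)$ lying in this graph.

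Next I would make the graph condition explicit using \eqref{eq:augmented_overfare_explicit}. Writing $\beta=\Theta(\overline{\gamma},\mathbf{S}_1\tau)=-\mathbf{T}_{1,2}\overline{\gamma}+\mathbf{R}_2\mathbf{S}_1\tau$ with $\tau$ the unique element of $\mathbf{R}_1\mathcal{A}(\mathscr{R})$ satisfying $\mathbf{S}_1\tau=\mathbf{P}_{\mathscr{R}}\zeta$ (unique because $\mathbf{S}_1\mathbf{R}_1$ is an isomorphism by \cite[Theorem 4.2]{Schippers_Staubach_scattering_III}), the identity $\mathbf{O}^{\mathrm{aug}}\Theta(\overline{\gamma},\mathbf{S}_1\tau)=(\overline{\gamma}-\mathbf{T}_{1,1}\overline{\gamma}+\mathbf{R}_1\mathbf{S}_1\tau,\ \overline{\mathbf{S}}_1\overline{\gamma}+\mathbf{S}_1\tau)$ shows that the first coordinate equals $\delta$ precisely when $(\mathbf{I}-\mathbf{T}_{1,1})\overline{\gamma}=\delta-\mathbf{R}_1\mathbf{S}_1\tau$, which is the content of \eqref{eq:one_form_BVP_solvability_condition}, and that the second coordinate equals $\zeta=\overline{\mathbf{S}}_1\overline{\nu}+\mathbf{S}_1\tau$ precisely when $\overline{\mathbf{S}}_1\overline{\gamma}=\overline{\mathbf{S}}_1\overline{\nu}$. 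Using the kernel characterization $\Ker\overline{\mathbf{S}}_1=(\overline{\mathbf{R}}_1\overline{\mathcal{A}(\mathscr{R})})^{\perp}$ from \cite{Schippers_Staubach_scattering_III}, the latter is exactly the assertion that the $\overline{\mathbf{R}}_1\overline{\mathcal{A}(\mathscr{R})}$-component of $\overline{\gamma}$ equals $\overline{\nu}$, which is the final claim of the theorem.

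For existence and uniqueness of $\overline{\gamma}$ I would argue entirely from the orthogonal splitting $\mathcal{A}_{\mathrm{harm}}(\riem_1)=\mathcal{A}(\riem_1)\oplus\overline{\mathcal{A}(\riem_1)}$: since $\mathbf{T}_{1,1}\overline{\gamma}$ is holomorphic and $\overline{\gamma}$ anti-holomorphic, $\|(\mathbf{I}-\mathbf{T}_{1,1})\overline{\gamma}\|^2=\|\overline{\gamma}\|^2+\|\mathbf{T}_{1,1}\overline{\gamma}\|^2$. This makes $\mathbf{I}-\mathbf{T}_{1,1}$ injective and bounded below by $1$, so the equation $(\mathbf{I}-\mathbf{T}_{1,1})\overline{\gamma}=\delta-\mathbf{R}_1\mathbf{S}_1\tau$ has a necessarily unique solution exactly when the right-hand side lies in $\mathrm{Im}(\mathbf{I}-\mathbf{T}_{1,1})$, i.e. under \eqref{eq:one_form_BVP_solvability_condition}; comparing anti-holomorphic parts forces $\overline{\gamma}=\overline{\mathbf{P}}_{\riem_1}\delta$. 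Uniqueness of $\beta$ then follows from injectivity of $\Theta$, and the solution formula is read off from $\beta=\Theta(\overline{\gamma},\mathbf{S}_1\tau)$. Continuous dependence is immediate: the lower bound gives $\|(\mathbf{I}-\mathbf{T}_{1,1})^{-1}\|\le 1$ on its image, the map $\zeta\mapsto\tau=\mathbf{R}_1(\mathbf{S}_1\mathbf{R}_1)^{-1}\mathbf{P}_{\mathscr{R}}\zeta$ is bounded, and $\mathbf{T}_{1,2},\mathbf{R}_2,\mathbf{S}_1$ are bounded.

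The step I expect to be the main obstacle is the cohomological bookkeeping in the second paragraph: verifying that \eqref{eq:one_form_BVP_solvability_condition} together with the prescribed $\tau$ (and the induced $\overline{\nu}$) really produces a form that is weakly compatible in \emph{both} the boundary-value and the cohomology-class senses. This hinges on the kernel/cokernel description of the harmonic Schiffer operator $\mathbf{S}_1$ and on keeping the two polarized halves of $\zeta$ and of $\delta$ aligned, namely that the holomorphic cohomology is pinned by $\tau$ while the anti-holomorphic cohomology is pinned by the $\overline{\mathbf{R}}_1\overline{\mathcal{A}(\mathscr{R})}$-component of $\overline{\gamma}$. By contrast, once the reformulation $\mathbf{O}^{\mathrm{aug}}\beta=(\delta,\zeta)$ is in hand, the existence, uniqueness, and continuity assertions reduce to the elementary orthogonality estimate above.
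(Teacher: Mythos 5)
Your overall architecture is close to the paper's: both routes reduce the problem to the single equation $(\mathbf{I}-\mathbf{T}_{1,1})\overline{\gamma}=\delta-\mathbf{R}_1\mathbf{S}_1\tau$, and your packaging through $\Theta$, $\mathbf{O}^{\mathrm{aug}}$ and the graph of $\mathbf{\Upsilon}$ (Theorems \ref{th:Theta_isomorphism} and \ref{th:upsilon_expression}) is a legitimate and rather clean substitute for the paper's direct appeals to the Part I results on exactness and the range of $\mathbf{T}_{1,2}$. Your injectivity and continuity arguments are fine: the orthogonality identity $\|(\mathbf{I}-\mathbf{T}_{1,1})\overline{\gamma}\|^2=\|\overline{\gamma}\|^2+\|\mathbf{T}_{1,1}\overline{\gamma}\|^2$ is exactly the observation underlying the paper's use of $\overline{\gamma}=\overline{\mathbf{P}}_{\riem_1}\delta$ in its continuity estimate, and uniqueness of $\beta$ via injectivity of $\Theta$ is equivalent to the paper's citation of the uniqueness of the Dirichlet problem.

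The genuine gap is the one you yourself flag and do not close. Your reduction shows that solvability is equivalent to the conjunction of two conditions: (a) $(\mathbf{I}-\mathbf{T}_{1,1})\overline{\gamma}=\delta-\mathbf{R}_1\mathbf{S}_1\tau$ with $\overline{\gamma}=\overline{\mathbf{P}}_{\riem_1}\delta$, and (b) $\overline{\mathbf{S}}_1\overline{\gamma}=\overline{\mathbf{S}}_1\overline{\nu}$, i.e.\ $\overline{\mathbf{S}}_1\overline{\mathbf{P}}_{\riem_1}\delta=\overline{\mathbf{P}}_{\mathscr{R}}\zeta$. The theorem asserts equivalence with (a) alone, so you must show that (a) implies (b); but (a) involves only $\delta$ and $\tau$ (the holomorphic half of $\zeta$), while (b) involves $\overline{\nu}$ (the anti-holomorphic half), and these are independent pieces of the data, so no formal implication is available from your setup --- you need the additional input that identifies the $\overline{\mathbf{R}}_1\overline{\mathcal{A}(\mathscr{R})}$-component of $\overline{\gamma}$ with $\overline{\nu}$. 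The paper obtains this in the necessity direction not from the graph formalism but from the solution itself: starting from an actual $\beta$, it uses the exactness of $\beta-\mathbf{R}_2\mathbf{S}_1\tau-\overline{\mathbf{R}}_2\overline{\mathbf{S}}_1\overline{\nu}$ together with \cite[Corollary 4.7, Theorem 4.11]{Schippers_Staubach_scattering_III} to write $\beta-\mathbf{R}_2\mathbf{S}_1\tau=-\mathbf{T}_{1,2}[\overline{\alpha}+\overline{\nu}]$ with $\overline{\alpha}\in[\overline{\mathbf{R}}_1\overline{\mathcal{A}(\mathscr{R})}]^{\perp}$, so that $\overline{\gamma}=\overline{\alpha}+\overline{\nu}$ has the correct component by construction; in the sufficiency direction it decomposes the given $\overline{\gamma}$ and uses $\overline{\mathbf{S}}_1\overline{\alpha}=0$ to verify the periods. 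To complete your argument you would have to reproduce that step (or prove that the intended data $(\delta,\zeta)$, arising from a common $(L,\lambda)$ as in the reformulation preceding the theorem, automatically satisfies (b)); as written, the ``if'' direction of the stated equivalence is not established.
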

 \begin{proof}  
   Assume that there exists a solution $\beta \in \mathcal{A}(\riem_2)$. Then 
   \[    \beta - \mathbf{R}_2 \mathbf{S}_1 \tau - \overline{\mathbf{R}_2} \overline{\mathbf{S}}_1 \overline{\nu} \in \mathcal{A}^\mathrm{e}_{\mathrm{harm}}(\riem_2)  \]
  so by \cite[Corollary 4.7]{Schippers_Staubach_scattering_III}
  \[   \beta  - \mathbf{R}_2 \mathbf{S}_1  \tau + \mathbf{T}_{1,2} \overline{\nu}  \in \mathcal{A}^\mathrm{e}(\riem_2).  \]
  Thus by \cite[Theorem 4.11]{Schippers_Staubach_scattering_III} there is a unique $\overline{\alpha} \in [\mathbf{R}_1 \overline{\mathcal{A}(\mathscr{R})} ]^\perp$ such that 
  \[  - \mathbf{T}_{1,2} \overline{\alpha} = \beta - \mathbf{R}_1 \mathbf{S}_1 \tau + \mathbf{T}_{1,2} \overline{\nu}.       \]
  This implies that 
  \[  \beta- \mathbf{R}_2 \mathbf{S}_1 \tau = - \mathbf{T}_{1,2} [ \overline{\alpha} + \overline{\nu} ].     \]
  Since 
  \[  -\mathbf{O}^\mathrm{e}  \mathbf{T}_{1,2} [ \overline{\alpha} + \overline{\nu} ] =  \overline{\alpha} + \overline{\nu} - \mathbf{T}_{1,1} \left( \overline{\alpha} + \overline{\nu} \right)   \]
  and so
  \[  \mathbf{O}'(\partial \riem_2,\partial \riem_1) \left[ \beta - \mathbf{R}_2 \mathbf{S}_1 \tau \right]  = 
    \left[ \delta - \mathbf{R}_1 \mathbf{S}_1 \tau \right], \]
  this proves that $\delta = \overline{\gamma} - \mathbf{T}_{1,1} \overline{\gamma}$ for $\overline{\gamma} = \overline{\alpha} + \overline{\nu}$ and furthermore establishes that the solution has the claimed form.  Uniqueness follows from \cite[Theorem 3.18]{Schippers_Staubach_scattering_II}, observing that the solution is also the solution to the Dirichlet problem with the specified data. 
  
  Conversely, assume that 
  \begin{equation} \label{eq:HBVP_proof_temp} 
   \delta - \mathbf{R}_1 \mathbf{S}_1 \tau = \overline{\gamma} - \mathbf{T}_{1,1} \overline{\gamma}
  \end{equation} 
   for some $\overline{\gamma} \in \overline{\mathcal{A}(\mathscr{R})}$. Let $\overline{\gamma} = \overline{\alpha} + \overline{\nu}$ be the decomposition of $\overline{\gamma}$ with respect to $\overline{\mathcal{A}(\riem_1)} = \overline{\mathbf{R}_1} \overline{\mathcal{A}(\mathscr{R})} \oplus [\overline{\mathbf{R}_1} \overline{\mathcal{A}(\mathscr{R})}]^\perp$.  Then we claim that 
  $\beta  = \mathbf{R}_2 \mathbf{S}_1 \tau - \mathbf{T}_{1,2} \overline{\gamma}$ satisfies 
  $[ \beta] = [ \delta ]$ and has the correct periods. 
  
  To see that $\beta$ has the correct periods, 
  observe that since $\overline{\alpha} \in [\overline{\mathbf{R}_1} \overline{\mathcal{A}(\mathscr{R})} ]^\perp$, $\overline{\mathbf{S}}_1 \overline{\alpha} = 0$ so
  by \cite[Theorem 4.7]{Schippers_Staubach_scattering_III}
  \[  - \mathbf{T}_{1,2} \overline{\gamma} - \overline{\mathbf{R}}_2 \overline{\mathbf{S}}_1 \overline{\nu} = 
     - \mathbf{T}_{1,2} \overline{\gamma} - \overline{\mathbf{R}}_2 \overline{\mathbf{S}}_1 \overline{\gamma}      \]
  is exact, and therefore $- \mathbf{T}_{1,2} \overline{\gamma} + \mathbf{R}_1 \mathbf{S}_1  \tau$ has the 
  specified periods.  To see that the boundary values of $\beta$ are the right ones, we observe that
  \[  \beta = \overline{\mathbf{R}}_2 \overline{\mathbf{S}}_1 \overline{\gamma}
     + \mathbf{R}_2 \mathbf{S}_1 \tau + [ - \mathbf{T}_{1,2} \overline{\gamma} - \overline{\mathbf{R}}_2 \overline{\mathbf{S}}_1 \overline{\gamma} ]  \]
  and then apply Proposition 4.10 in \cite{Schippers_Staubach_scattering_III}.
  to overfare the quantity in brackets, to show that 
  \begin{equation}  \label{eq:decomposition_foundation}
   [ \beta ] = \left[ \overline{\mathbf{R}}_1 \overline{\mathbf{S}}_1 
   \overline{\gamma} + \mathbf{R}_1 \mathbf{S}_1 \tau \right]  + \left[
     \overline{\gamma} - \mathbf{T}_{1,1} \overline{\gamma} - \overline{\mathbf{R}}_1
      \overline{\mathbf{S}}_1 \overline{\gamma} \right] = [ \delta ]   
  \end{equation}
  where we have used (\ref{eq:HBVP_proof_temp}) in the second equality.\\
  
  Finally we show continuous dependence of the solution on the data.
  Let $ \delta - \mathbf{R}_1 \mathbf{S}_1  \tau  \in  \mathrm{Im} (\mathbf{I} - \mathbf{T}_{1,1}).$ Then $\delta - \mathbf{R}_1 \mathbf{S}_1  \tau = (\mathbf{I} - \mathbf{T}_{1,1})\overline{\gamma}$ and $$\overline{\gamma}= \overline{\mathbf{P}_{\riem_1}}(\delta - \mathbf{R}_1 \mathbf{S}_1  \tau )= \overline{\mathbf{P}_{\riem_1}}\delta.$$ Therefore
  
  \begin{equation}\label{foersta normen}
    \Vert\overline{\gamma} \Vert = \Vert \overline{\mathbf{P}_{\riem_1}}\delta \Vert \lesssim  \Vert \delta\Vert.
  \end{equation}
  Furthermore 
   \begin{equation}\label{andra normen}
    \Vert\tau \Vert \leq  \Vert \tau + \overline{\nu} \Vert \leq  \Vert ({\mathbf{R}}_1
      {\mathbf{S}}_1)^{-1} \zeta\Vert \lesssim  \Vert \zeta\Vert.
  \end{equation} 
  
Thus \eqref{foersta normen} and \eqref{andra normen} and the boundedness of $\mathbf{T}_{1,2} $ and $\mathbf{R}_2  \mathbf{S}_1$ yield that
  \begin{equation*}
    \Vert\beta \Vert = \Vert -\mathbf{T}_{1,2} \overline{\gamma} + \mathbf{R}_2  \mathbf{S}_1 \tau\Vert \leq  \Vert \mathbf{T}_{1,2} \overline{\gamma}\Vert + \Vert  \mathbf{R}_2  \mathbf{S}_1 \tau\Vert \lesssim \Vert \overline{\gamma}\Vert + \Vert \tau\Vert\lesssim \Vert \delta\Vert + \Vert \zeta\Vert,
  \end{equation*}
which shows the continuous dependence of the solution $\beta$ on the initial data $(\delta, \zeta).$ Thus the semi-exact CNT BVP is well-posed in the Bergman space of forms {satisfying condition \ref{eq:one_form_BVP_solvability_condition}. }
 \end{proof}   
\end{subsection}
\end{section}

\clearpage

\printnoidxglossary[sort=def]


\begin{thebibliography}{99}
\bibitem{Ahlfors} Ahlfors, L. V. The complex analytic structure of the space of closed Riemann surfaces,
in Analytic Functions. Princeton University Press, \(1960 .\)
\bibitem{Ahlfors_Sario} Ahlfors, L. V.; Sario, L.
 { Riemann surfaces}.
   { Princeton Mathematical Series}, No. 26 Princeton University Press, Princeton, N.J. 1960.
   


   \bibitem{BergmanSchiffer}  Bergman, S.; and Schiffer, M. 
  { Kernel functions and conformal mapping.} Compositio Math. {\bf 8}, (1951), 205--249.


\bibitem{Hormander} H\"ormander, L. Linear partial differential operators. Springer Verlag, Berlin-New York, 1976.

\bibitem{J} Jonsson, A. Besov spaces on closed subsets of \(\mathbf{R}^{n}\). Trans. Amer. Math. Soc. {\bf 341} no. 1, (1994), 355--370.

\bibitem{JW} Jonsson, A.; Wallin, H. Function spaces on subsets of \(\mathbf{R}^{n}\). - Math. Rep. {\bf 2} no. 1, 1984.

\bibitem{KY2}
Kirillov A.~A.; and Yuriev, D.~V. { Representations of the Virasoro
algebra by the orbit method}, J. Geom. Phys. {\bf 5} (1988), no.
3, 351--363.


\bibitem{KrillovYuriev} A. A. Kirillov,  and D. V. Yuri'ev, { K\"ahler geometry of the infinite-dimensional homogeneous
space $M = \mathrm{Diff}^+ (\mathbb{S}^1)/\mathrm{rot}(\mathbb{S}^1)$}, Funktsional. Anal. i Prilozhen. {\bf 21} (1987) no. 4, 35--46.

\bibitem{NS}  Nag, S.; Sullivan, D. Teichm\"uller theory and the universal period mapping via quantum calculus and the $H^{1/2}$ space on the circle. Osaka J. Math. {\bf32} (1995), no. 1, 1--34. 

\bibitem{Nap_Yulm} Napalkov, V. V., Jr.; Yulmukhametov, R. S. On the Hilbert transform in the Bergman space. (Russian) Mat. Zametki {\bf 70} (2001), no. 1, 68--78; translation in Math. Notes {\bf 70} (2001), no. 1-2, 61--70.

   \bibitem{RadnellSchippers_monster} Radnell, D. and Schippers, E. {Quasisymmetric sewing in rigged Teichmüller space}, Commun. Contemp. Math. {\bf{8}} (2006), no. 4, 481--534.


 
\bibitem{RSS_genus_zero}  Radnell, D.; Schippers, E.; and Staubach, W. { A Model of the Teichmüller space of genus-zero bordered surfaces by period maps}. Conform. Geom. Dyn. {\bf 23} (2019), 32-–51. 
   
\bibitem{RSS_Dirichletspace} Radnell, D.; Schippers, E.; and Staubach, W.  { Dirichlet spaces of domains bounded by quasicircles}, Commun. Contemp. Math. {\bf 22} (2020), no. 3.



\bibitem{Riemann}Riemann, B. Theorie der Abelschen Functionen, J. Reine Angew. Math., {\bf 54} (1857), pp. 115–155; Werke, pp. 88-142.
   
   \bibitem{Royden}  Royden, H. L. {Function theory on compact Riemann surfaces}. J. Analyse Math. {\bf 18}, (1967), 295--327. 
   
   


\bibitem{Schippers_Staubach_CAOT} Schippers, E.; Staubach, W. Riemann boundary value problem on quasidisks, Faber isomorphism and Grunsky operator. Complex Anal. Oper. Theory {\bf12} (2018), no. 2, 325--354.  



\bibitem{Schippers_Staubach_Grunsky_expository}  Schippers, E.; and Staubach, W.  Analysis on quasicircles --- A unified approach through transmission and jump problems. EMS Surv. Math. Sci. {\bf 9} (2022), no.1, 31–-97.

\bibitem{Schippers_Staubach_scattering_arxiv} Schippers, E; and Staubach, W.  A scattering theory of harmonic one-forms on Riemann surfaces.  arXiv:2112.00835v1

\bibitem{Schippers_Staubach_scattering_I} Schippers, E.; and Staubach, W. Overfare of harmonic functions on Riemann surfaces.   
New York J. Math. {\bf 31} (2025) 321--367.


\bibitem{Schippers_Staubach_scattering_II} Schippers, E.; and Staubach, W. Overfare of harmonic one-forms on Riemann surfaces.   
New York J. Math. {\bf 30} (2024) 1437--1478.

\bibitem{Schippers_Staubach_scattering_III}
 Schippers, E.; and Staubach, W. Scattering theory on Riemann surfaces I: Schiffer operators, cohomology, and index theorems.  Accepted for publication in Communications in Contemporary Mathematics.

\bibitem{Schippers_Staubach_Shirazi} Schippers, E., Shirazi, M. and Staubach, W.  Schiffer comparison operators and approximations on Riemann surfaces bordered by quasicircles,  J. Geom. Anal. {\bf 31} (2021), no. 6, 5877–-5908.
 


\bibitem{Shirazi_thesis}Shirazi, M. Faber and Grunsky Operators on Bordered Riemann Surfaces of Arbitrary Genus and the Schiffer Isomorphism". PhD Dissertation, University of Manitoba 2020. 

\bibitem{Shirazi_Grunsky} Shirazi, M. Faber and Grunsky Operators Corresponding to Bordered Riemann Surfaces, Conform. Geom. Dyn. {\bf 24} (2020), 177--201.


\bibitem{Takhtajan_Teo_Memoirs} Takhtajan, L.; Teo, L-P. Weil-Petersson Metric on the Universal Teichm\"uller Space,
 Memoirs of the American Mathematical Society. {\bf 183}  (2006) no. 861.
 

\end{thebibliography}
\end{document}